\documentclass[a4paper,reqno,10pt]{amsart}
\usepackage[marginratio=1:1,height=692.3pt,width=440pt,tmargin=80pt]{geometry}
\usepackage{graphicx}
\usepackage{amsmath,latexsym,amssymb,amsthm,mathtools}
\usepackage[nospace,noadjust]{cite}
\usepackage{textcomp}
\usepackage{setspace}

\usepackage{xcolor}
\usepackage{url}
\usepackage{enumerate}

\newtheoremstyle{tightplain}{0pt}{0pt}{\itshape}{}{\bfseries}{.}{ }{}
\newtheoremstyle{tightdef}{0pt}{0pt}{\normalfont}{}{\bfseries}{.}{ }{}
\theoremstyle{tightplain}
\newtheorem{thm}{Theorem}[section]
\newtheorem{proposition}[thm]{Proposition}

\newtheorem{lemma}[thm]{Lemma}
\newtheorem{corollary}[thm]{Corollary}

\theoremstyle{tightdef}
\newtheorem{definition}[thm]{Definition}
\newtheorem{remark}[thm]{Remark}
\newtheorem{exam}[thm]{Example}

\renewcommand{\ker}{\operatorname{Ker}}
\newcommand{\Id}{\operatorname{Id}}

\renewcommand{\aa}{\mathfrak a}
\newcommand{\g}{\mathfrak g}
\newcommand{\kk}{\mathbf k}
\newcommand{\lt}{\lhd}
\newcommand{\rt}{\rhd}
\newcommand{\gl}{\mathfrak{gl}}
\newcommand{\Tot}{\mathrm{Tot}}
\newcommand{\lL}{\ell_{\lt}}
\newcommand{\lR}{\ell_{\rt}}
\newcommand{\rL}{r_{\lt}}
\newcommand{\rR}{r_{\rt}}
\newcommand{\pair}[2]{\langle #1,#2\rangle}

\title[Kupershmidt--Nijenhuis structures on pre-Leibniz algebras]{Kupershmidt--Nijenhuis structures on pre-Leibniz algebras}
\author{Sami Benabdelhafidh}
\address{University of Sfax, Faculty of Sciences of Sfax, BP 1171, 3038 Sfax, Tunisia}
\email{abdelhafidhsami41@gmail.com, samibenabdelhafidh95@gmail.com}

\subjclass[2020]{17A32, 17B38, 17A30}
\keywords{Pre-Leibniz algebra, Kupershmidt operator, (dual-)Nijenhuis pair, Kupershmidt-(dual-)Nijenhuis structure, $r$-$n$ structure, RBN-structure, BN-structure}

\usepackage{hyperref}
\hypersetup{
pdftitle={Kupershmidt-Nijenhuis structures on pre-Leibniz algebras},
pdfauthor={Sami Benabdelhafidh},
colorlinks,
citecolor=blue,
filecolor=black,
linkcolor=blue,
urlcolor=black
}

\mathtoolsset{showonlyrefs,showmanualtags}

\begin{document}
\noeqref{eq:Leibrep2,eq:ON2a,eq:PL2,eq:dn2,eq:dn3,eq:eq1b,eq:key2,eq:rn2a}

\begin{abstract}
In this paper, we first construct the dual of a representation of a pre-Leibniz algebra and study infinitesimal deformations of a pre-Leibniz algebra with a representation, which lead to Nijenhuis pairs, characterizing trivial deformations, and to dual-Nijenhuis pairs. Building on these notions, we introduce Kupershmidt-(dual-)Nijenhuis structures and show that they give rise to hierarchies of pairwise compatible Kupershmidt operators, the converse holding under an invertibility condition. Finally, we show that Rota--Baxter--Nijenhuis structures, $r$-$n$ structures and BN-structures all arise as particular cases, and that the first two correspond to each other on quadratic pre-Leibniz algebras.
\end{abstract}

\maketitle

\thispagestyle{empty}

\tableofcontents

\section{Introduction}\label{sec:intro}

Nijenhuis operators on Lie algebras play an important role in the deformation theory of algebras, where they generate trivial deformations \cite{NR}, and in the theory of integrable systems, where they appear under the name of hereditary operators \cite{FF} (see also \cite{Dor}). They have been studied on many algebraic structures, for instance on pre-Lie algebras \cite{WSBL}; the deformation theory of Nijenhuis algebra morphisms was developed in \cite{BenabdelhafidhNijenhuisMorphisms}. Poisson--Nijenhuis structures were introduced by Magri and Morosi in the study of completely integrable systems \cite{MM} (see also \cite{KSM,KR}). On a Lie algebra, the $r$-$n$ structures of \cite{RRH} are the infinitesimal counterparts of invariant Poisson--Nijenhuis structures on Lie groups.

Rota--Baxter operators were introduced by Baxter \cite{Ba} and have found many applications, for example in renormalization \cite{CK} and in the splitting of operations of operads \cite{BBGN}; see \cite{Guo} for a survey. On a Lie algebra equipped with a nondegenerate invariant symmetric bilinear form, skew-symmetric solutions of the classical Yang--Baxter equation correspond to Rota--Baxter operators of weight zero \cite{STS}. In \cite{Ku}, Kupershmidt gave an operator form of the classical Yang--Baxter equation for an arbitrary representation; the resulting maps are now called Kupershmidt operators, $O$-operators or relative Rota--Baxter operators. For the adjoint representation one recovers Rota--Baxter operators of weight zero, while for the coadjoint representation one recovers skew-symmetric $r$-matrices. Kupershmidt operators provide solutions of the classical Yang--Baxter equation in larger Lie algebras \cite{B1,BGN}. The interplay between Kupershmidt operators and Nijenhuis operators on a Lie algebra with a representation was studied by Hu, Liu and Sheng \cite{HLS}, whose Kupershmidt-(dual-)Nijenhuis structures contain $r$-$n$ structures as a special case and produce compatible Kupershmidt operators. Analogous structures were studied on associative algebras \cite{LBS}, on Malcev and Hom-Malcev algebras \cite{Mabrouk,HarrathiMabroukNcibSilvestrov}, on pre-Malcev algebras \cite{QRZ}, and on Leibniz algebras by Sun and Jing \cite{SJ}.

Leibniz algebras were introduced by Bloh \cite{Bloh} and rediscovered by Loday \cite{Lod,LP}. They appear in the study of Courant algebroids \cite{KW}, in deformation quantization \cite{DW} and in the integration of Leibniz algebras \cite{BW}; their deformation theory was studied in \cite{Bal,FM}. Pre-Leibniz algebras were introduced by Tang and Sheng \cite{ST} under the name of Leibniz-dendriform algebras, and studied by Das \cite{Das}, who developed their cohomology and deformation theory. They arise from relative Rota--Baxter operators on Leibniz algebras \cite{ST}, in the same way as dendriform algebras arise from Rota--Baxter operators and $O$-operators on associative algebras \cite{Aguiar,U}.

The purpose of this paper is to introduce Kupershmidt-(dual-)Nijenhuis structures on a pre-Leibniz algebra with a representation, by adapting ideas from \cite{HLS,SJ}. A representation of a pre-Leibniz algebra consists of four actions $\lL,\lR,\rL,\rR$ satisfying nine identities, and its dual is not obtained by dualizing them separately: the correct formula mixes them (Proposition~\ref{prop:dual}). Another new point is a proof of the hierarchy of Kupershmidt operators (Proposition~\ref{thm:hierarchy}) which avoids powers of Nijenhuis operators.

Our main results are the following. Nijenhuis pairs are exactly the pairs that generate trivial deformations of a pre-Leibniz algebra with a representation (Theorem~\ref{thm:trivial}), and $(N,S)$ is a Nijenhuis pair with respect to $V$ if and only if $(N,S^{t})$ is a dual-Nijenhuis pair with respect to the dual representation (Proposition~\ref{thm:dualpair}). For a Kupershmidt-(dual-)Nijenhuis structure $(K,S,N)$, the map $S$ is a Nijenhuis operator on the induced pre-Leibniz algebra $V_K$, with $(V_K)_S=V_{NK}$ (Proposition~\ref{thm:SNij}), and $K$ and $NK$ are compatible Kupershmidt operators (Proposition~\ref{prop:47}); more generally, the operators $N^kK$, $k\in\mathbb N$, form a hierarchy of pairwise compatible Kupershmidt operators (Proposition~\ref{thm:hierarchy}), and two invertible Kupershmidt operators $K_1,K_2$ are compatible if and only if $K_1K_2^{-1}$ is a Nijenhuis operator (Corollary~\ref{cor:invcompat}); conversely, two compatible Kupershmidt operators, one of which is invertible, give rise to Kupershmidt-dual-Nijenhuis structures (Proposition~\ref{prop:48}). Finally, Rota--Baxter--Nijenhuis (RBN) structures are Kupershmidt-Nijenhuis structures with respect to the regular representation (see the discussion after Definition~\ref{def:RBN}), $r$-$n$ structures and BN-structures (closed bilinear forms with a compatible Nijenhuis operator) are Kupershmidt-dual-Nijenhuis structures with respect to the dual regular representation (Theorems~\ref{thm:rnON} and~\ref{thm:BN}), and on a quadratic pre-Leibniz algebra RBN-structures and $r$-$n$ structures correspond to each other (Theorem~\ref{thm:RBNrn}).

The paper is organized as follows. In Section~\ref{sec:pre}, we recall pre-Leibniz algebras, their representations and Kupershmidt operators, and construct the dual representation. In Section~\ref{sec:NP}, we study infinitesimal deformations of a pre-Leibniz algebra with a representation, and introduce Nijenhuis pairs and dual-Nijenhuis pairs. In Section~\ref{sec:ON}, we introduce Kupershmidt-(dual-)Nijenhuis structures and study their properties. In Section~\ref{sec:hier}, we study compatible Kupershmidt operators, and show that Kupershmidt-(dual-)Nijenhuis structures give rise to hierarchies of pairwise compatible Kupershmidt operators. In Section~\ref{sec:rn}, we study RBN-structures, $r$-$n$ structures and BN-structures.

Throughout the paper, $\kk$ is a field of characteristic $0$ and all vector spaces are finite dimensional over $\kk$.

\section{Preliminaries on pre-Leibniz algebras}\label{sec:pre}

In this section, we recall Leibniz and pre-Leibniz algebras, their representations \cite{Lod,LP,ST,Das} and Kupershmidt operators, and construct the dual representation.

\begin{definition}\label{def:Leib}
A (left) \emph{Leibniz algebra} $(\g,[\cdot,\cdot])$ is a vector space $\g$ equipped with a bilinear product $[\cdot,\cdot]:\g\otimes\g\to\g$ satisfying
\begin{equation}\label{eq:Leib}
[x,[y,z]]=[[x,y],z]+[y,[x,z]],\qquad \forall\, x,y,z\in\g .
\end{equation}
\end{definition}

\begin{definition}\label{def:Leibrep}
Let $(\g,[\cdot,\cdot])$ be a Leibniz algebra and $V$ a vector space. Let $\rho^L,\rho^R:\g\to\gl(V)$ be two linear maps satisfying, for all $x,y\in\g$,
\begin{align}
\rho^L([x,y])&=[\rho^L(x),\rho^L(y)],\label{eq:Leibrep}\\
\rho^R([x,y])&=[\rho^L(x),\rho^R(y)],\label{eq:Leibrep2}\\
\rho^R(y)\rho^L(x)&=-\rho^R(y)\rho^R(x).\label{eq:Leibrep3}
\end{align}
Then the triple $(V,\rho^L,\rho^R)$ is called a \emph{representation} of the Leibniz algebra $(\g,[\cdot,\cdot])$ \cite{LP,ST}.
\end{definition}

Equivalently, $\g\oplus V$ is a Leibniz algebra under the bracket $[x+u,y+v]=[x,y]+\rho^L(x)v+\rho^R(y)u$. For a linear map $A:V\to W$ we denote by $A^{t}:W^*\to V^*$ its transpose (dual map), $\pair{A^{t}\xi}{v}=\pair{\xi}{Av}$, and for a linear map $\rho$ with values in $\gl(V)$ we put $\rho^*(x):=-\rho(x)^{t}$, that is, $\pair{\rho^*(x)\xi}{v}=-\pair{\xi}{\rho(x)v}$. Then the triple $(V^*,(\rho^L)^*,-(\rho^L)^*-(\rho^R)^*)$ is a representation of $\g$, called the \emph{dual representation} \cite{ST}.

\begin{definition}[{\cite{ST,Das}}]
\label{def:preLeib}
A \emph{pre-Leibniz algebra}  is a triple $(\aa,\lt,\rt)$ consisting of a vector space $\aa$ and two bilinear products such that, for all $x,y,z\in\aa$,
\begin{align}
x\lt(y\lt z+y\rt z)&=(x\lt y)\lt z+y\rt(x\lt z),\label{eq:PL1}\\
x\rt(y\lt z)&=(x\rt y)\lt z+y\lt(x\lt z+x\rt z),\label{eq:PL2}\\
x\rt(y\rt z)&=(x\lt y+x\rt y)\rt z+y\rt(x\rt z).\label{eq:PL3}
\end{align}
A \emph{morphism} of pre-Leibniz algebras is a linear map preserving both products.
\end{definition}

Pre-Leibniz algebras split Leibniz algebras in the following sense \cite{ST,Das}: if $(\aa,\lt,\rt)$ is a pre-Leibniz algebra, then $x\circ y:=x\lt y+x\rt y$ is a Leibniz bracket on $\aa$; the Leibniz algebra $\aa_\Tot=(\aa,\circ)$ is called the \emph{total Leibniz algebra}. Conversely, if $R$ is a Rota--Baxter operator on a Leibniz algebra $\g$, i.e.\ $[Rx,Ry]=R([Rx,y]+[x,Ry])$, then $x\lt y:=[x,Ry]$ and $x\rt y:=[Rx,y]$ define a pre-Leibniz algebra on $\g$.

As for Leibniz algebras, representations of a pre-Leibniz algebra are defined through semidirect products \cite{Das}.

\begin{definition}\label{def:preLeibrep}
Let $(\aa,\lt,\rt)$ be a pre-Leibniz algebra. A vector space $V$ together with four linear maps $\lL,\lR,\rL,\rR:\aa\to\gl(V)$ such that $\aa\oplus V$, with the products
\begin{equation}
(x,u)\lt(y,v)=\big(x\lt y,\ \lL(x)v+\rL(y)u\big),\qquad (x,u)\rt(y,v)=\big(x\rt y,\ \lR(x)v+\rR(y)u\big),
\end{equation}
is a pre-Leibniz algebra, is called a \emph{representation} of $\aa$. This pre-Leibniz algebra is called the \emph{semidirect product} of $\aa$ and $V$, denoted by $\aa\ltimes V$.
\end{definition}

Thus $x\lt v=\lL(x)v$, $x\rt v=\lR(x)v$, $v\lt x=\rL(x)v$ and $v\rt x=\rR(x)v$. We put $\ell:=\lL+\lR$ and $r:=\rL+\rR$. Comparing the $V$-components of \eqref{eq:PL1}--\eqref{eq:PL3} in $\aa\ltimes V$, with exactly one argument in $V$, shows that $(V,\lL,\lR,\rL,\rR)$ is a representation if and only if, for all $x,y\in\aa$ (cf.\ \cite[Def.~4.1]{Das}),
\begin{align}
 \lL(x)\ell(y)&=\lL(x\lt y)+\lR(y)\lL(x),\tag{R1}\\\displaybreak[0]
 \lL(x)r(y)&=\rL(y)\lL(x)+\rR(x\lt y),\tag{R2}\\\displaybreak[0]
 \rL(x\circ y)&=\rL(y)\rL(x)+\lR(x)\rL(y),\tag{R3}\\\displaybreak[0]
 \lR(x)\lL(y)&=\lL(x\rt y)+\lL(y)\ell(x),\tag{R4}\\\displaybreak[0]
 \lR(x)\rL(y)&=\rL(y)\lR(x)+\rL(x\circ y),\tag{R5}\\\displaybreak[0]
 \rR(x\lt y)&=\rL(y)\rR(x)+\lL(x)r(y),\tag{R6}\\\displaybreak[0]
 \lR(x)\lR(y)&=\lR(x\circ y)+\lR(y)\lR(x),\tag{R7}\\\displaybreak[0]
 \lR(x)\rR(y)&=\rR(y)\ell(x)+\rR(x\rt y),\tag{R8}\\\displaybreak[0]
 \rR(x\rt y)&=\rR(y)r(x)+\lR(x)\rR(y).\tag{R9}
\end{align}

Let $L_\lt,R_\lt,L_\rt,R_\rt$ be the left and right multiplication operators, $L_\lt(x)y=R_\lt(y)x=x\lt y$ and $L_\rt(x)y=R_\rt(y)x=x\rt y$. Then $(\aa,L_\lt,L_\rt,R_\lt,R_\rt)$ is a representation of $\aa$, called the \emph{regular} (or adjoint) \emph{representation}.

The total Leibniz algebra of $\aa\ltimes V$ is $\aa_\Tot\ltimes V_\Tot$, where $V_\Tot:=(V,\ell,r)$, so $V_\Tot$ is a representation of $\aa_\Tot$. Moreover, substituting (R3) into (R5), (R6) into (R2) and (R9) into (R8), we get, for all $x,y\in\aa$,
\begin{align}
\rL(y)\big(\lR(x)+\rL(x)\big)&=0,\tag{E1}\label{eq:extra}\\
\rL(y)\big(\lL(x)+\rR(x)\big)&=0,\tag{E2}\label{eq:extra2}\\
\rR(y)\big(\ell(x)+r(x)\big)&=0 .\tag{E3}\label{eq:extra3}
\end{align}

We now construct the dual representation. It is not obtained by dualizing each of the four actions separately. As above, for $\zeta:\aa\to\gl(V)$ we put $\zeta^*(x):=-\zeta(x)^{t}$. Throughout the paper, the superscript ${}^{t}$ always denotes the transpose of a linear map (for instance $N^{t}$, $S^{t}$), while ${}^{*}$ applied to an action always includes the sign, as in $\zeta^*$; the superscript ${}^*$ on a vector space denotes its dual. Note that $(\zeta_1(x)\zeta_2(y))^{t}=\zeta_2^*(y)\zeta_1^*(x)$.

\begin{proposition}\label{prop:dual}
Let $(V,\lL,\lR,\rL,\rR)$ be a representation of a pre-Leibniz algebra $(\aa,\lt,\rt)$. Then
\begin{equation}\label{eq:dual}
V^\vee:=\big(V^*,\ -\lL^*,\ \lL^*+\lR^*,\ -\ell^*-r^*,\ \lL^*+\rR^*\big)
\end{equation}
is a representation of $\aa$, called the \emph{dual representation}. Moreover, $(V^\vee)^\vee=V$.
\end{proposition}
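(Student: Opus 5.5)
The plan is a direct verification: check that the four maps in \eqref{eq:dual} satisfy (R1)--(R9), then compute the double dual. Write
\[
\lL'=-\lL^*,\quad \lR'=\lL^*+\lR^*,\quad \rL'=-\ell^*-r^*,\quad \rR'=\lL^*+\rR^*.
\]
Their sums are $\ell'=\lL'+\lR'=\lR^*$ and $r'=\rL'+\rR'=-\lR^*-\rL^*$, which shortens the bookkeeping.

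Each identity (R$i$) for $V^\vee$ is an equality of operators on $V^*$ whose terms are either $\zeta^*(z)$ or products $\zeta_1^*(x)\zeta_2^*(y)$. I will transpose it back to $V$ using two facts: $\zeta^*(z)=-\zeta(z)^{t}$, and $\zeta_1^*(x)\zeta_2^*(y)=(\zeta_2(y)\zeta_1(x))^{t}$, the latter being the rule noted just before the proposition. This turns each identity into an equivalent operator identity on $V$, with the order of compositions reversed and the linear terms negated. The task is then to show that identity follows from (R1)--(R9) together with (E1)--(E3).

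For example, (R1) for $V^\vee$ reads $-\lL^*(x)\lR^*(y)=-\lL^*(x\lt y)-(\lL^*+\lR^*)(y)\lL^*(x)$. Transposing gives
\[
\lR(y)\lL(x)=-\lL(x\lt y)+\lL(x)\lL(y)+\lL(x)\lR(y),
\]
which is exactly (R1) for $V$. I expect the other identities that involve only $\lL,\lR$ and $\ell$ to reduce similarly: (R4)$^\vee$ and (R7)$^\vee$ should come from (R4) and (R7), possibly combined with (R1).

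The main obstacle is the identities where $\rL'$ and $\rR'$ enter, namely (R2), (R3), (R5), (R6), (R8) and (R9) for $V^\vee$. There the mixed sums $-\ell^*-r^*$ and $\lL^*+\rR^*$ produce, after transposing, many terms in which a right action is composed on the left with a left action, or the reverse. My approach is to expand everything into the nine basic compositions. I will first cancel the terms that match one of (R2), (R3), (R5), (R6), (R8), (R9) for $V$. I expect the remainder to be a combination of the left-hand sides of (E1), (E2), (E3), possibly after using (R3), (R6) or (R9) to rewrite $\rL(x\circ y)$, $\rR(x\lt y)$ or $\rR(x\rt y)$. This is why (E1)--(E3) were recorded beforehand. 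If a case does not close this way, I will check it by pairing with $\xi\in V^*$ and $v\in V$ and inspecting the $V^*$-component of the semidirect product $\aa\ltimes V^*$ directly.

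For involutivity, identify $V^{**}=V$. Since $(\zeta^*)^*=-(-\zeta^{t})^{t}=\zeta$, applying \eqref{eq:dual} twice gives:
\begin{itemize}
\item $\lL''=-(-\lL^*)^*=\lL$;
\item $\lR''=-\lL+\lL+\lR=\lR$;
\item $\rL''=-(\ell')^*-(r')^*=-\lR+\lR+\rL=\rL$;
\item $\rR''=-\lL+\lL+\rR=\rR$.
\end{itemize}
Hence $(V^\vee)^\vee=V$.
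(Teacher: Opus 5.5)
Your proposal is correct and is essentially the paper's proof run in the opposite direction. The paper dualizes (R1), (R2), (R4), (R8), (E2), (E3) and the Leibniz-representation identities of $V_\Tot$, and writes each (R$i'$) as a combination of these; your transposition back to $V$ checks the same identities, and your involutivity computation is identical to the paper's.

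One caution for the cases you left as expectations. Write $T=\ell+r$. Then (R3$'$) and (R5$'$) transpose to $T(x\circ y)=\ell(x)T(y)-T(y)\ell(x)$, plus $r(x)T(y)=0$ for (R3$'$), which are the representation axioms of $V_\Tot=(V,\ell,r)$. So (R1), (R4), (R7) are needed there as well, not only (R3), (R6), (R9) and (E1)--(E3). By contrast, (R2$'$) and (R8$'$) are simply (R1)+(R2) and (R4)+(R8), and (R6$'$), (R9$'$) differ from these only by (E3) and (E2) respectively.
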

\begin{proof}
Put $a=\lL^*$, $b=\lR^*$, $c=\rL^*$, $d=\rR^*$, so that $\ell^*=a+b$ and $r^*=c+d$. Then the actions of \eqref{eq:dual} are $\lL'=-a$, $\lR'=\ell^*$, $\rL'=-\ell^*-r^*$, $\rR'=a+d$, and we have $\ell'=b$ and $r'=-b-c$. Dualizing (R1), (R2), (R4), (R8), \eqref{eq:extra2}, \eqref{eq:extra3}, and \eqref{eq:Leibrep}--\eqref{eq:Leibrep3} for $V_\Tot$, we obtain
\begin{align}
a(x\lt y)&=a(x)b(y)-\ell^*(y)a(x),\tag{D1}\\
d(x\lt y)&=a(x)c(y)-r^*(y)a(x),\tag{D2}\\
a(x\rt y)&=\ell^*(x)a(y)-a(y)b(x),\tag{D4}\\
d(x\rt y)&=\ell^*(x)d(y)-d(y)b(x),\tag{D8}\\
(a+d)(y)c(x)&=0,\tag{X2}\\
(\ell^*+r^*)(y)d(x)&=0,\tag{X3}\\
\ell^*(x\circ y)&=[\ell^*(x),\ell^*(y)],\tag{T1}\\
r^*(x\circ y)&=\ell^*(x)r^*(y)-r^*(y)\ell^*(x),\tag{T2}\\
(\ell^*+r^*)(x)r^*(y)&=0,\qquad\forall\,x,y\in\aa .\tag{T3}
\end{align}
Denote by (R1$'$)--(R9$'$) the identities (R1)--(R9) for \eqref{eq:dual}. For instance, by (D1) and (D2), we have
\begin{equation}
\begin{aligned}
&\lL'(x)\ell'(y)-\lL'(x\lt y)-\lR'(y)\lL'(x)=-a(x)b(y)+a(x\lt y)+\ell^*(y)a(x)=0,\\
&\lL'(x)r'(y)-\rL'(y)\lL'(x)-\rR'(x\lt y)=a(x)\big(b(y)+c(y)\big)-(\ell^*+r^*)(y)a(x)-(a+d)(x\lt y)\\
&\qquad=\big(a(x)b(y)-\ell^*(y)a(x)-a(x\lt y)\big)+\big(a(x)c(y)-r^*(y)a(x)-d(x\lt y)\big)=0,
\end{aligned}
\end{equation}
which are (R1$'$) and (R2$'$). Similarly, (R7$'$) is (T1), (R5$'$) is the sum of (T1) and (T2), and (R3$'$) reduces to (T3) by (R5$'$). Moreover, (R1$'$) and (R4$'$) are (D1) and (D4), (R2$'$) is the sum of (D1) and (D2), and (R6$'$) follows from (R2$'$) and (X3). Similarly, (R8$'$) is the sum of (D4) and (D8), and (R9$'$) follows from (R8$'$) and (X2). Thus $V^\vee$ is a representation of $\aa$. Finally, since $\zeta^{**}=\zeta$, the actions of $(V^\vee)^\vee$ are $\lL$, $-\lL+\ell=\lR$, $-\lR+(\lR+\rL)=\rL$ and $-\lL+(\lL+\rR)=\rR$, which implies that $(V^\vee)^\vee=V$.
\end{proof}

In particular, the dual of the regular representation is
\begin{equation}\label{eq:dualreg}
\aa^\vee=\big(\aa^*,\ -L_\lt^*,\ L^*_\circ,\ -L_\circ^*-R_\circ^*,\ L_\lt^*+R_\rt^*\big),
\end{equation}
where $L_\circ=L_\lt+L_\rt$ and $R_\circ=R_\lt+R_\rt$. It is called the \emph{dual regular representation}.

Let $(V,\lL,\lR,\rL,\rR)$ be a representation of a pre-Leibniz algebra $(\aa,\lt,\rt)$. To endow $V$ with a pre-Leibniz algebra structure, we need the notion of a Kupershmidt operator \cite{Ku}. A linear map $K:V\to\aa$ is called a \emph{Kupershmidt operator} (also called an $O$-operator or a relative Rota--Baxter operator) on $\aa$ with respect to $(V,\lL,\lR,\rL,\rR)$ if, for all $u,v\in V$,
\begin{equation}\label{eq:O}
\begin{aligned}
Ku\lt Kv&=K\big(\lL(Ku)v+\rL(Kv)u\big),\\
Ku\rt Kv&=K\big(\lR(Ku)v+\rR(Kv)u\big).
\end{aligned}
\end{equation}
If one writes, for any linear map $K:V\to\aa$,
\begin{equation}
u\lt_Kv:=\lL(Ku)v+\rL(Kv)u,\qquad u\rt_Kv:=\lR(Ku)v+\rR(Kv)u ,
\end{equation}
then \eqref{eq:O} reads $Ku\lt Kv=K(u\lt_Kv)$ and $Ku\rt Kv=K(u\rt_Kv)$. Since $(Ku,u)\lt(Kv,v)=(Ku\lt Kv,u\lt_Kv)$ and $(Ku,u)\rt(Kv,v)=(Ku\rt Kv,u\rt_Kv)$ in $\aa\ltimes V$, the map $K$ is a Kupershmidt operator if and only if its graph $G_K=\{(Ku,u)\mid u\in V\}$ is a subalgebra of $\aa\ltimes V$. In that case $u\mapsto(Ku,u)$ is an isomorphism from $(V,\lt_K,\rt_K)$ onto $G_K$, so $V$ becomes a pre-Leibniz algebra, denoted by $V_K=(V,\lt_K,\rt_K)$, and $K:V_K\to\aa$ is a morphism.
A Kupershmidt operator on $\aa$ with respect to the regular representation is called a \emph{Rota--Baxter operator}: $Rx\lt Ry=R(Rx\lt y+x\lt Ry)$ and $Rx\rt Ry=R(Rx\rt y+x\rt Ry)$.

Now consider the dual regular representation $\aa^\vee$. For $\pi\in\aa\otimes\aa$, define $\pi^\sharp:\aa^*\to\aa$ by
\begin{equation}
\pair{\pi^\sharp(\alpha)}{\beta}=\pi(\alpha,\beta),\qquad\forall\,\alpha,\beta\in\aa^*.
\end{equation}
We say that $\pi$ is a \emph{pre-Leibniz $r$-matrix} if $\pi^\sharp$ is a Kupershmidt operator on $\aa$ with respect to $\aa^\vee$; explicitly, for all $\alpha,\beta\in\aa^*$,
\begin{align}
\pi^\sharp\alpha\lt\pi^\sharp\beta&=\pi^\sharp\big(-L_\lt^*(\pi^\sharp\alpha)\beta-(L_\circ^*+R_\circ^*)(\pi^\sharp\beta)\alpha\big),\notag\\
\pi^\sharp\alpha\rt\pi^\sharp\beta&=\pi^\sharp\big(L_\circ^*(\pi^\sharp\alpha)\beta+(L_\lt^*+R_\rt^*)(\pi^\sharp\beta)\alpha\big).
\end{align}
Consequently, a pre-Leibniz $r$-matrix induces a pre-Leibniz algebra structure on $\aa^*$. Unlike \cite{ST}, we impose no symmetry condition on $\pi$, and do not consider a tensor form of these equations.

We conclude this section with an example in which the pre-Leibniz identities are not trivially satisfied. It will be used throughout the paper.

\begin{exam}\label{ex:3dim}
Let $\g$ be the Leibniz algebra with basis $\{e_1,e_2,e_3\}$ and nonzero brackets $[e_1,e_2]=e_2$, $[e_1,e_3]=e_3$, i.e.\ $[x,y]=x_1(y_2e_2+y_3e_3)$; both sides of \eqref{eq:Leib} equal $x_1y_1(z_2e_2+z_3e_3)$, and $\g$ is not a Lie algebra. The map $R(e_1)=e_1$, $R(e_2)=0$, $R(e_3)=e_2$ is a Rota--Baxter operator on $\g$, since $[Rx,Ry]=x_1y_3e_2=R\big(x_1(y_2+y_3)e_2+x_1y_3e_3\big)=R([Rx,y]+[x,Ry])$. Hence $x\lt y=[x,Ry]$ and $x\rt y=[Rx,y]$ define a pre-Leibniz algebra $\aa$ with nonzero products $e_1\lt e_3=e_2$, $e_1\rt e_2=e_2$ and $e_1\rt e_3=e_3$. For any pre-Leibniz algebra, $(\aa,L_\lt,L_\rt,0,0)$ is a representation, since (R1), (R4), (R7) are \eqref{eq:PL1}--\eqref{eq:PL3} and the other identities read $0=0$; moreover $\Id_\aa$ is a Kupershmidt operator with respect to it, and $V_{\Id_\aa}=\aa$. For our $\aa$, $V=(\aa,L_\lt,L_\rt,0,0)$ has the actions $L_\lt(x)=x_1E$ and $L_\rt(x)=x_1P$, where $E(e_3)=e_2$, $E(e_1)=E(e_2)=0$, and $P$ is the projection onto $\mathrm{span}\{e_2,e_3\}$ along $\kk e_1$.
\end{exam}

\section{Deformations of a pre-Leibniz algebra with a representation and Nijenhuis pairs}\label{sec:NP}

In this section, we study infinitesimal deformations of a pre-Leibniz algebra with a representation, which lead to Nijenhuis pairs and dual-Nijenhuis pairs.

Let $(V,\lL,\lR,\rL,\rR)$ be a representation of a pre-Leibniz algebra $(\aa,\lt,\rt)$. Throughout, $\zeta$ runs over the four actions $\{\lL,\lR,\rL,\rR\}$. Let $\omega_\lt,\omega_\rt:\aa\otimes\aa\to\aa$ be bilinear maps and $\varpi_\zeta:\aa\to\gl(V)$ linear maps. For $t\in\kk$, put
\begin{equation}
\begin{aligned}
x\lt_ty&=x\lt y+t\,\omega_\lt(x,y),\qquad x\rt_ty=x\rt y+t\,\omega_\rt(x,y),\qquad \zeta_t(x)=\zeta(x)+t\,\varpi_\zeta(x).
\end{aligned}
\end{equation}
The family $(\omega_\lt,\omega_\rt,\varpi_\zeta)$ is said to generate an \emph{infinitesimal deformation} of $\aa$ with the representation $V$ if, for every $t$, $\aa_t=(\aa,\lt_t,\rt_t)$ is a pre-Leibniz algebra and $V_t=(V,(\zeta_t)_\zeta)$ is a representation of $\aa_t$. Of course, $(\aa,V)$ can be viewed as an infinitesimal deformation with $\omega=0$ and $\varpi=0$.

The following notion generalizes the corresponding notions for Lie algebras \cite{HLS} and Leibniz algebras \cite{SJ}.

\begin{definition}\label{def:equiv}
Two infinitesimal deformations $(\aa'_t,V'_t)$ and $(\aa_t,V_t)$ of a pre-Leibniz algebra $\aa$ with a representation $V$ are \emph{equivalent} if there exist $N\in\gl(\aa)$ and $S\in\gl(V)$ such that $(\Id_\aa+tN,\Id_V+tS)$ is a homomorphism from $(\aa'_t,V'_t)$ to $(\aa_t,V_t)$, i.e.\ for all $x,y\in\aa$,
\begin{align}
(\Id_\aa+tN)(x\lt'_ty)&=(\Id_\aa+tN)x\lt_t(\Id_\aa+tN)y,\label{eq:eq1}\\
(\Id_\aa+tN)(x\rt'_ty)&=(\Id_\aa+tN)x\rt_t(\Id_\aa+tN)y,\label{eq:eq1b}\\
(\Id_V+tS)\,\zeta'_t(x)&=\zeta_t\big((\Id_\aa+tN)x\big)(\Id_V+tS)\qquad\text{for each }\zeta.\label{eq:eq2}
\end{align}
An infinitesimal deformation $(\aa_t,V_t)$ is called \emph{trivial} if it is equivalent to $(\aa,V)$, i.e.\ \eqref{eq:eq1}--\eqref{eq:eq2} hold with $(\aa'_t,V'_t)=(\aa_t,V_t)$ and the undeformed structure on the right-hand sides.
\end{definition}

By direct calculation, if $(\aa_t,V_t)$ is trivial, the coefficients of $t$ and $t^2$ in \eqref{eq:eq1}--\eqref{eq:eq1b} give $\omega_\lt=\lt_N$, $\omega_\rt=\rt_N$ and
\begin{equation}\label{eq:Nij}
\begin{aligned}
Nx\lt Ny&=N(x\lt_Ny),\\
Nx\rt Ny&=N(x\rt_Ny),\qquad\forall\, x,y\in\aa,
\end{aligned}
\end{equation}
where
\begin{equation}\label{eq:Ndef}
\begin{aligned}
x\lt_Ny&:=Nx\lt y+x\lt Ny-N(x\lt y),\\
x\rt_Ny&:=Nx\rt y+x\rt Ny-N(x\rt y).
\end{aligned}
\end{equation}
A linear map $N$ satisfying \eqref{eq:Nij} is called a \emph{Nijenhuis operator} on $\aa$. The deformed products $\lt_N,\rt_N$ of \eqref{eq:Ndef} should not be confused with the products $\lt_K,\rt_K$ induced by a Kupershmidt operator $K$ in Section~\ref{sec:pre}; which one is meant will always be clear from the nature of the operator in the subscript. Moreover, the coefficients of $t$ in \eqref{eq:eq2} give $\varpi_\zeta(x)=\zeta(Nx)+\zeta(x)S-S\zeta(x)$, and those of $t^2$ give, for every $\zeta$ and every $x\in\aa$,
\begin{equation}\label{eq:NP}
\zeta(Nx)S=S\zeta(Nx)+S\zeta(x)S-S^2\zeta(x).
\end{equation}

Now we are ready to define Nijenhuis pairs and dual-Nijenhuis pairs.

\begin{definition}\label{def:NP}
Let $N\in\gl(\aa)$ and $S\in\gl(V)$. We say that $(N,S)$ is a \emph{Nijenhuis pair} on $\aa$ with respect to $V$ when $N$ is a Nijenhuis operator on $\aa$ and $S$ satisfies \eqref{eq:NP} for the four actions $\zeta$.
\end{definition}

For the regular representation and $S=N$, \eqref{eq:NP} for $\zeta=L_\lt$ (resp.\ $R_\lt$) is \eqref{eq:Nij} at $(x,y)$ (resp.\ $(y,x)$), and similarly for $\rt$. Hence $(N,N)$ is a Nijenhuis pair on $\aa$ with the regular representation if and only if $N$ is a Nijenhuis operator.

Before studying the representation part, we recall that a Nijenhuis operator deforms the pre-Leibniz structure. If $N$ is a Nijenhuis operator on a pre-Leibniz algebra $\aa$, then $\aa_N:=(\aa,\lt_N,\rt_N)$ is a pre-Leibniz algebra, $(\aa,\lt+t\lt_N,\rt+t\rt_N)$ is a pre-Leibniz algebra for every $t\in\kk$, and $N:\aa_N\to\aa$ is a morphism. Moreover, $\Id_\aa+tN$ is a morphism from $(\aa,\lt+t\lt_N,\rt+t\rt_N)$ to $(\aa,\lt,\rt)$. Indeed, write $\mu$ for either product $\lt$ or $\rt$, and $\mu_N$ for its deformation \eqref{eq:Ndef}. Expanding both sides and comparing the coefficients of $t$ and $t^2$ by \eqref{eq:Ndef} and \eqref{eq:Nij}, we get, for all $x,y\in\aa$ and $t\in\kk$,
\begin{equation}\label{eq:IdtN}
(\Id_\aa+tN)\big(\mu+t\mu_N\big)(x,y)=\mu\big((\Id_\aa+tN)x,(\Id_\aa+tN)y\big).
\end{equation}
Since $\det(\Id_\aa+tN)$ is a polynomial in $t$ with constant term $1$, $\Id_\aa+tN$ is invertible for all but finitely many $t$. For such $t$, \eqref{eq:IdtN} shows that $\mu+t\mu_N$ is obtained from $\mu$ by transport of structure along $\Id_\aa+tN$, hence satisfies \eqref{eq:PL1}--\eqref{eq:PL3}. These identities are polynomial in $t$ and hold for infinitely many $t$, so they hold for all $t$; their coefficient of $t^2$ shows that $\aa_N$ is a pre-Leibniz algebra. Finally, \eqref{eq:Nij} says that $N(\mu_N(x,y))=\mu(Nx,Ny)$, i.e.\ $N:\aa_N\to\aa$ is a morphism, and \eqref{eq:IdtN} says that $\Id_\aa+tN$ is a morphism.

Nijenhuis pairs can be characterized in terms of Nijenhuis operators on the semidirect product.

\begin{lemma}\label{lem:semi}
Let $N\in\gl(\aa)$ and $S\in\gl(V)$, and let $N+S$ denote the map $(x,u)\mapsto(Nx,Su)$ on $\aa\ltimes V$. Then the products of $(\aa\ltimes V)_{N+S}$ are
\begin{align}
(x,u)\lt_{N+S}(y,v)&=\big(x\lt_Ny,\ \varpi_{\lL}(x)v+\varpi_{\rL}(y)u\big),\label{eq:semi}\\
(x,u)\rt_{N+S}(y,v)&=\big(x\rt_Ny,\ \varpi_{\lR}(x)v+\varpi_{\rR}(y)u\big),\label{eq:semi2}
\end{align}
where $\varpi_\zeta(x)=\zeta(Nx)+\zeta(x)S-S\zeta(x)$. Moreover, the Nijenhuis pairs $(N,S)$ are exactly the pairs for which $N+S$ is a Nijenhuis operator on $\aa\ltimes V$.
\end{lemma}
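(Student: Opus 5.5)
The plan is a direct computation in the semidirect product $\aa\ltimes V$, carried out for one product at a time. Write $\mu$ for either $\lt$ or $\rt$, and let $(\zeta_\ell,\zeta_r)$ denote the corresponding pair of actions: $(\lL,\rL)$ for $\lt$ and $(\lR,\rR)$ for $\rt$.

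\textbf{Step 1: the deformed products.} Apply definition \eqref{eq:Ndef} to $\hat N:=N+S$ on $\aa\ltimes V$:
\[
(x,u)\mu_{\hat N}(y,v)=\hat N(x,u)\,\mu\,(y,v)+(x,u)\,\mu\,\hat N(y,v)-\hat N\big((x,u)\,\mu\,(y,v)\big).
\]
Expand each term with the semidirect product formulas.

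The $\aa$-component is $Nx\mu y+x\mu Ny-N(x\mu y)=x\mu_Ny$.

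For the $V$-component, collect the terms linear in $v$:
\[
\zeta_\ell(Nx)v+\zeta_\ell(x)Sv-S\zeta_\ell(x)v=\varpi_{\zeta_\ell}(x)v .
\]
The terms in $u$ are $\zeta_r(y)Su+\zeta_r(Ny)u-S\zeta_r(y)u=\varpi_{\zeta_r}(y)u$. This gives \eqref{eq:semi} and \eqref{eq:semi2}.

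\textbf{Step 2: the Nijenhuis condition.} The condition $\hat N(X\mu_{\hat N}Y)=\hat NX\,\mu\,\hat NY$ is bilinear in $X=(x,u)$ and $Y=(y,v)$. It therefore suffices to check it on three kinds of pairs: $u=v=0$; $u=0,\ y=0$; and $x=0,\ v=0$. When both $V$-components are nonzero, both sides vanish, because $V\mu V=0$ in the semidirect product.

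On $\aa\times\aa$ the condition is exactly \eqref{eq:Nij} for $N$. On the pair $((x,0),(0,v))$, the left side is $S\varpi_{\zeta_\ell}(x)v$ by Step 1, and the right side is $\zeta_\ell(Nx)Sv$. Equality reads
\[
\zeta_\ell(Nx)S=S\zeta_\ell(Nx)+S\zeta_\ell(x)S-S^2\zeta_\ell(x),
\]
which is \eqref{eq:NP} for $\zeta_\ell$. Symmetrically, the pair $((0,u),(y,0))$ gives \eqref{eq:NP} for $\zeta_r$.

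Running over $\mu\in\{\lt,\rt\}$ covers all four actions. So $\hat N$ is Nijenhuis if and only if $N$ is Nijenhuis and $S$ satisfies \eqref{eq:NP} for every $\zeta$, that is, if and only if $(N,S)$ is a Nijenhuis pair.

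There is no real obstacle here. The only point needing care is the bookkeeping in Step 1: the $u$-terms carry $\zeta_r(y)$ rather than $\zeta_r(x)$, so the argument of $\varpi$ must be tracked correctly. One also has to note that $V$ acts trivially on itself, which is what removes the mixed $V\times V$ case.
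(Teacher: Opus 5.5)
Your proposal is correct and follows essentially the same route as the paper: both expand the deformed products of $N+S$ directly in $\aa\ltimes V$, then read off \eqref{eq:Nij} and \eqref{eq:NP} from the components of the Nijenhuis identity. The paper computes $M(x,u)\lt M(y,v)-M\big((x,u)\lt_M(y,v)\big)$ for general arguments in one go, whereas you split by bilinearity into pure pairs (with the $V\times V$ case vanishing since $V\mu V=0$); this is only a difference in bookkeeping.
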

\begin{proof}
Put $M=N+S$. By the definition of $\aa\ltimes V$, we have
\begin{equation}
\begin{aligned}
(x,u)\lt_M(y,v)&=M(x,u)\lt(y,v)+(x,u)\lt M(y,v)-M\big((x,u)\lt(y,v)\big)\\
&=\big(Nx\lt y+x\lt Ny-N(x\lt y),\\
&\qquad\ [\lL(Nx)+\lL(x)S-S\lL(x)]v+[\rL(Ny)+\rL(y)S-S\rL(y)]u\big)\\
&=\big(x\lt_Ny,\ \varpi_{\lL}(x)v+\varpi_{\rL}(y)u\big).
\end{aligned}
\end{equation}
Furthermore,
\begin{equation}
\begin{aligned}
M(x,u)\lt M(y,v)-M\big((x,u)\lt_M(y,v)\big)&=\big(Nx\lt Ny-N(x\lt_Ny),\\
&\qquad\ [\lL(Nx)S-S\varpi_{\lL}(x)]v+[\rL(Ny)S-S\varpi_{\rL}(y)]u\big),
\end{aligned}
\end{equation}
which vanishes for all $u,v$ if and only if \eqref{eq:Nij} holds and \eqref{eq:NP} holds for $\zeta=\lL,\rL$. The case of $\rt$ is similar, with $\zeta=\lR,\rR$.
\end{proof}

By the computation before Definition~\ref{def:NP}, a trivial infinitesimal deformation gives rise to a Nijenhuis pair. Conversely, we have the following result.

\begin{thm}\label{thm:trivial}
Let $(N,S)$ be a Nijenhuis pair on a pre-Leibniz algebra $\aa$ with a representation $V$. Then
\begin{equation}
\omega_\lt(x,y)=x\lt_Ny,\qquad \omega_\rt(x,y)=x\rt_Ny,\qquad \varpi_\zeta(x)=\zeta(Nx)+\zeta(x)S-S\zeta(x),
\end{equation}
define a trivial infinitesimal deformation of $\aa$ with the representation $V$. Conversely, every trivial infinitesimal deformation of $\aa$ with the representation $V$ is of this form, for some Nijenhuis pair $(N,S)$.
\end{thm}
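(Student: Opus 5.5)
The plan is to reduce everything to the semidirect product $\aa\ltimes V$ and invoke Lemma~\ref{lem:semi} together with the discussion after Definition~\ref{def:NP} on Nijenhuis operators. Let $(N,S)$ be a Nijenhuis pair and put $M=N+S$ on $\aa\ltimes V$. By Lemma~\ref{lem:semi}, $M$ is a Nijenhuis operator on $\aa\ltimes V$. The products of $(\aa\ltimes V)_M$ are given by \eqref{eq:semi}--\eqref{eq:semi2}, with $\mathfrak a$-component $\lt_N,\rt_N$ and $V$-component governed by the $\varpi_\zeta$.

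\textbf{Step 1: the deformation.} By the general fact recalled before Lemma~\ref{lem:semi}, applied to the pre-Leibniz algebra $\aa\ltimes V$ and the Nijenhuis operator $M$, the products $\lt+t\lt_M$ and $\rt+t\rt_M$ make $\aa\ltimes V$ a pre-Leibniz algebra for every $t\in\kk$. Reading off components, these deformed products are exactly the semidirect-product products built from $\lt_t=\lt+t\lt_N$, $\rt_t=\rt+t\rt_N$ and the actions $\zeta_t=\zeta+t\varpi_\zeta$. Projecting to $\aa$, the products $\lt_t,\rt_t$ form a pre-Leibniz algebra $\aa_t$. The full structure then being a pre-Leibniz algebra of semidirect form means, by Definition~\ref{def:preLeibrep}, that $V_t$ is a representation of $\aa_t$. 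Hence $(\omega_\lt,\omega_\rt,\varpi_\zeta)$ generates an infinitesimal deformation.

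\textbf{Step 2: triviality.} Identity \eqref{eq:IdtN} applied to $M$ states that $\Id+tM$ is a morphism from the deformed structure on $\aa\ltimes V$ to the undeformed one. Since $\Id+tM=(\Id_\aa+tN)+(\Id_V+tS)$, the $\aa$-components give \eqref{eq:eq1}--\eqref{eq:eq1b}. For the $V$-components, evaluate on $(x,0)\lt(0,v)$, $(0,u)\lt(y,0)$ and the analogues for $\rt$. This yields \eqref{eq:eq2} for each of the four actions $\zeta$, with the undeformed structure on the right-hand side. Alternatively, I could expand \eqref{eq:eq2} directly: the $t$-coefficient holds by the definition of $\varpi_\zeta$, the $t^2$-coefficient is \eqref{eq:NP}, and a leftover $t^2$ term $\varpi_\zeta(x)S$ versus $\zeta(Nx)S$ must be checked. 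I would use this expansion as a cross-check of the semidirect-product argument.

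\textbf{Step 3: the converse.} This is the computation carried out before Definition~\ref{def:NP}. The $t$ and $t^2$ coefficients of \eqref{eq:eq1}--\eqref{eq:eq2}, with the undeformed structure on the right, force $\omega_\lt=\lt_N$, $\omega_\rt=\rt_N$ and $\varpi_\zeta(x)=\zeta(Nx)+\zeta(x)S-S\zeta(x)$. They also force \eqref{eq:Nij} and \eqref{eq:NP}, so $(N,S)$ is a Nijenhuis pair. The only subtlety is bookkeeping of the orders in $t$: the left side of \eqref{eq:eq2} has terms up to $t^2$, and the right side up to $t^2$ as well.

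I expect the main obstacle to be the identification in Step 1. One must verify that the $V$-components of $\lt_M$ and $\rt_M$ match the semidirect-product actions $\zeta_t$ in the correct slots, left actions against right actions, which is precisely what \eqref{eq:semi}--\eqref{eq:semi2} provide.
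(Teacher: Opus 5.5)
Your proposal is correct and follows the paper's proof: you use Lemma~\ref{lem:semi} and \eqref{eq:IdtN} for $M=N+S$ to get that $V_t$ is a representation of $\aa_t$, and the computation before Definition~\ref{def:NP} for the converse. The one difference is that you read off \eqref{eq:eq2} from the $V$-components of \eqref{eq:IdtN} for $M$, which works, whereas the paper expands \eqref{eq:eq2} directly. In that direct expansion the $t^2$-coefficient comparison $S\varpi_\zeta(x)=\zeta(Nx)S$ is exactly \eqref{eq:NP}, so there is no leftover term to check in your cross-check.
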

\begin{proof}
By the discussion before Lemma~\ref{lem:semi}, $(\aa,\lt_t,\rt_t)$ is a pre-Leibniz algebra for every $t$. By Lemma~\ref{lem:semi}, $M=N+S$ is a Nijenhuis operator on $\aa\ltimes V$, and by \eqref{eq:semi},
\begin{equation}
(x,u)\lt(y,v)+t\,(x,u)\lt_M(y,v)=\big(x\lt_ty,\ (\lL)_t(x)v+(\rL)_t(y)u\big),
\end{equation}
and similarly for $\rt$. Thus, by \eqref{eq:IdtN} applied to $M$, the semidirect product $\aa_t\ltimes V_t$ is a pre-Leibniz algebra, i.e.\ $V_t$ is a representation of $\aa_t$. Furthermore, \eqref{eq:eq1}--\eqref{eq:eq1b} are \eqref{eq:IdtN}, and by \eqref{eq:NP},
\begin{equation}
\begin{aligned}
(\Id_V+tS)\zeta_t(x)&=\zeta(x)+t\big(\varpi_\zeta(x)+S\zeta(x)\big)+t^2S\varpi_\zeta(x)\\
&=\zeta(x)+t\big(\zeta(Nx)+\zeta(x)S\big)+t^2\zeta(Nx)S=\zeta\big((\Id_\aa+tN)x\big)(\Id_V+tS).
\end{aligned}
\end{equation}
Thus the deformation is trivial. The converse follows from the computation before Definition~\ref{def:NP}.
\end{proof}

Replacing $S\zeta(x)S-S^2\zeta(x)$ by $\zeta(x)S^2-S\zeta(x)S$ in \eqref{eq:NP}, we obtain a second notion, whose name is explained by Proposition~\ref{thm:dualpair} below.

\begin{definition}\label{def:dNP}
Let $N\in\gl(\aa)$ and $S\in\gl(V)$. We say that $(N,S)$ is a \emph{dual-Nijenhuis pair} on $\aa$ with respect to $V$ when $N$ is a Nijenhuis operator on $\aa$ and, for the four actions $\zeta$ and all $x\in\aa$,
\begin{equation}\label{eq:dNP}
\zeta(Nx)S=S\zeta(Nx)+\zeta(x)S^2-S\zeta(x)S.
\end{equation}
\end{definition}

There is a close relationship between Nijenhuis pairs and dual-Nijenhuis pairs, through the dual representation of Proposition~\ref{prop:dual}.

\begin{proposition}\label{thm:dualpair}
Let $N\in\gl(\aa)$ and $S\in\gl(V)$. Then $(N,S)$ is a Nijenhuis pair with respect to $V$ exactly when $(N,S^{t})$ is a dual-Nijenhuis pair with respect to the dual representation $V^\vee$.
\end{proposition}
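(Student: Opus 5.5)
The plan is to transpose the defining identity \eqref{eq:NP} and use linearity in the action. The condition that $N$ be a Nijenhuis operator on $\aa$ is common to both notions, so only \eqref{eq:NP} and \eqref{eq:dNP} need comparing. The key observation is that, for fixed $N$ and $S$, both identities are \emph{linear} in the action $\zeta$. So the set of linear maps $\zeta:\aa\to\gl(V)$ satisfying \eqref{eq:NP} (resp.\ $\eta:\aa\to\gl(V^*)$ satisfying \eqref{eq:dNP} with $S^{t}$) is a vector space. Hence it suffices to show two things: \eqref{eq:NP} for a single $\zeta$ is equivalent to \eqref{eq:dNP} for $\zeta^*$ with $S^{t}$, and the span of the four actions of $V^\vee$ equals the span of $\lL^*,\lR^*,\rL^*,\rR^*$.

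First step: transpose \eqref{eq:NP}. Using $(AB)^{t}=B^{t}A^{t}$ and $\zeta(x)^{t}=-\zeta^*(x)$, the identity $\zeta(Nx)S=S\zeta(Nx)+S\zeta(x)S-S^2\zeta(x)$ transposes to
\begin{equation}
-S^{t}\zeta^*(Nx)=-\zeta^*(Nx)S^{t}-S^{t}\zeta^*(x)S^{t}+\zeta^*(x)(S^{t})^2 .
\end{equation}
Multiplying by $-1$ and rearranging gives
\begin{equation}
\zeta^*(Nx)S^{t}=S^{t}\zeta^*(Nx)+\zeta^*(x)(S^{t})^2-S^{t}\zeta^*(x)S^{t},
\end{equation}
which is exactly \eqref{eq:dNP} for the action $\zeta^*$ and the operator $S^{t}$. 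Since transposition is bijective, this is an equivalence, and it holds for each $\zeta$ separately.

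Second step: compare spans. By \eqref{eq:dual}, the actions of $V^\vee$ are $-a$, $a+b$, $-(a+b+c+d)$ and $a+d$, where $a=\lL^*$, $b=\lR^*$, $c=\rL^*$, $d=\rR^*$. Conversely, $a,b,c,d$ are recovered as linear combinations of these four actions:
\begin{equation}
a=-\lL',\qquad b=\lL'+\lR',\qquad d=\rR'+\lL',\qquad c=-\rL'-\lR'-\rR'-\lL'.
\end{equation}
This invertibility is also implicit in $(V^\vee)^\vee=V$. Hence \eqref{eq:dNP} holds for the four actions of $V^\vee$ if and only if it holds for $a,b,c,d$. By the first step, that is equivalent to \eqref{eq:NP} holding for $\lL,\lR,\rL,\rR$, which proves both directions.

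I expect no real obstacle. The only points needing care are the sign bookkeeping in the transpose, where the convention $\zeta^*=-\zeta^{t}$ must produce exactly the sign pattern of \eqref{eq:dNP}, and checking that the linear change of actions is invertible, which is what gives the converse direction.
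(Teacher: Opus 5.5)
Your proposal is correct and follows the paper's proof essentially verbatim. The paper also transposes \eqref{eq:NP} to obtain \eqref{eq:dNP} for $\zeta^*$ and $S^{t}$, and then uses that \eqref{eq:dNP} is linear in the action, together with the invertible linear change between the actions of $V^\vee$ and $\lL^*,\lR^*,\rL^*,\rR^*$ given by \eqref{eq:dual}; your explicit inverse formulas for $a,b,c,d$ simply spell out that invertibility.
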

\begin{proof}
Let $\zeta$ be an action of $V$. For all $x\in\aa$, $v\in V$ and $\xi\in V^*$, we have
\begin{equation}
\begin{aligned}
&\pair{\xi}{\zeta(Nx)Sv-S\zeta(Nx)v-S\zeta(x)Sv+S^2\zeta(x)v}\\
&\quad=-\pair{S^{t}\zeta^*(Nx)\xi-\zeta^*(Nx)S^{t}\xi-S^{t}\zeta^*(x)S^{t}\xi+\zeta^*(x)(S^{t})^2\xi}{v}.
\end{aligned}
\end{equation}
Thus \eqref{eq:NP} holds for $\zeta$ if and only if \eqref{eq:dNP} holds for $\zeta^*$ and $S^{t}$. Since \eqref{eq:dNP} is linear in the action, and the actions of $V^\vee$ and the maps $\lL^*,\lR^*,\rL^*,\rR^*$ are invertible linear combinations of each other by \eqref{eq:dual}, the conclusion follows.
\end{proof}

By $(V^\vee)^\vee=V$, it follows in the same way that $(N,S)$ is a dual-Nijenhuis pair with $V$ if and only if $(N,S^{t})$ is a Nijenhuis pair with $V^\vee$.

\begin{definition}\label{def:perfect}
A \emph{perfect Nijenhuis pair} on a pre-Leibniz algebra $\aa$ with a representation $V$ is a Nijenhuis pair $(N,S)$ such that, for each $\zeta$ and every $x\in\aa$,
\begin{equation}\label{eq:perfect}
S^2\zeta(x)+\zeta(x)S^2=2S\zeta(x)S.
\end{equation}
\end{definition}

The right-hand sides of \eqref{eq:NP} and \eqref{eq:dNP} differ by $\zeta(x)S^2+S^2\zeta(x)-2S\zeta(x)S$. Hence a perfect Nijenhuis pair is also a dual-Nijenhuis pair, and a pair which is both a Nijenhuis pair and a dual-Nijenhuis pair is perfect.

\begin{remark}\label{prop:semiNij}
By Lemma~\ref{lem:semi}, $N+S$ is a Nijenhuis operator on $\aa\ltimes V$ for every Nijenhuis pair $(N,S)$. If $(N,S)$ is perfect, then $(N,S^{t})$ is a Nijenhuis pair with respect to $V^\vee$ by the remark after Proposition~\ref{thm:dualpair}, so $N+S^{t}$ is a Nijenhuis operator on $\aa\ltimes V^\vee$.
\end{remark}

Let us consider, for each action $\zeta$, the linear maps $\hat\zeta,\tilde\zeta:\aa\to\gl(V)$ defined by
\begin{align}
\hat\zeta(x)&=\zeta(Nx)+\zeta(x)S-S\zeta(x),\label{eq:hattilde}\\
\tilde\zeta(x)&=\zeta(Nx)+S\zeta(x)-\zeta(x)S .\label{eq:hattilde2}
\end{align}

As a consequence of Lemma~\ref{lem:semi} and Proposition~\ref{thm:dualpair}, (dual-)Nijenhuis pairs give rise to new representations of the deformed pre-Leibniz algebra $\aa_N$.

\begin{corollary}\label{cor:reps}
\begin{enumerate}[(i)]
\item Let $(N,S)$ be a Nijenhuis pair on a pre-Leibniz algebra $\aa$ with a representation $V$. Then $\hat V:=(V,\hat\ell_\lt,\hat\ell_\rt,\hat r_\lt,\hat r_\rt)$ is a representation of the pre-Leibniz algebra $\aa_N$.
\item Let $(N,S)$ be a dual-Nijenhuis pair on a pre-Leibniz algebra $\aa$ with a representation $V$. Then $\tilde V:=(V,\tilde\ell_\lt,\tilde\ell_\rt,\tilde r_\lt,\tilde r_\rt)$ is a representation of the pre-Leibniz algebra $\aa_N$.
\end{enumerate}
\end{corollary}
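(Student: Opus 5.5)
For (i), I would use Lemma~\ref{lem:semi}. Since $(N,S)$ is a Nijenhuis pair, $M=N+S$ is a Nijenhuis operator on the pre-Leibniz algebra $\aa\ltimes V$. By the discussion preceding Lemma~\ref{lem:semi}, the deformed products $\lt_M,\rt_M$ then make $(\aa\ltimes V)_M$ a pre-Leibniz algebra.

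By \eqref{eq:semi}--\eqref{eq:semi2}, these products have the semidirect-product shape. On $\aa$ they restrict to $\lt_N,\rt_N$, and the mixed components are given by the four maps $\varpi_\zeta=\hat\zeta$ of \eqref{eq:hattilde}. Two further checks are needed. The $V$-component of a product of two elements of $V$ is zero, since $\lt_M$ applied to $(0,u),(0,v)$ involves no $V\times V$ term. The $\aa$-component does not depend on $u,v$. Hence $(\aa\ltimes V)_M$ is exactly the semidirect product $\aa_N\ltimes\hat V$. By Definition~\ref{def:preLeibrep}, $\hat V$ is therefore a representation of $\aa_N$.

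For (ii), I would dualize and use Proposition~\ref{thm:dualpair}. By the remark following that proposition, $(N,S)$ dual-Nijenhuis with respect to $V$ means $(N,S^{t})$ is a Nijenhuis pair with respect to $V^\vee$. Part (i) then shows that $\widehat{V^\vee}$, formed with $S^{t}$, is a representation of $\aa_N$. Applying Proposition~\ref{prop:dual} to this representation of $\aa_N$, the space $(\widehat{V^\vee})^\vee$ is a representation of $\aa_N$ on $V^{**}=V$. It remains to identify it with $\tilde V$.

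The key observation is that the assignment $\zeta\mapsto\hat\zeta$ (for $S^{t}$) commutes with $\zeta\mapsto\zeta^*$ up to the swap hat$\leftrightarrow$tilde. For any action $\zeta$ of $V$,
\[
\widehat{\zeta^*}(x)=\zeta^*(Nx)+\zeta^*(x)S^{t}-S^{t}\zeta^*(x)=-\big(\zeta(Nx)+S\zeta(x)-\zeta(x)S\big)^{t}=(\tilde\zeta)^*(x).
\]
Both the hat construction and the dualization formula \eqref{eq:dual} are linear in the actions. Hence each action of $\widehat{V^\vee}$ is the same linear combination of $\lL^*,\dots,\rR^*$ as the corresponding action of $V^\vee$, now applied to $\tilde\zeta^*$. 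This gives $\widehat{V^\vee}=(\tilde V)^\vee$ as quadruples of maps.

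Then $(\widehat{V^\vee})^\vee=((\tilde V)^\vee)^\vee=\tilde V$. The involutivity $(W^\vee)^\vee=W$ in Proposition~\ref{prop:dual} was checked purely formally on the four maps, by $\zeta^{**}=\zeta$ and linear algebra. So it holds for the quadruple $\tilde V$ even before we know that $\tilde V$ is a representation.

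The main point to handle carefully is this bookkeeping step. One must confirm that the linear recombination in \eqref{eq:dual} commutes with the hat/tilde construction, which holds because the construction is applied action by action and is linear. One must also confirm that the double-dual identity is formal in the maps, so it does not presuppose that $\tilde V$ is a representation. Everything else is a direct invocation of earlier results.
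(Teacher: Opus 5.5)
Your proposal is correct and follows the paper's proof. Part (i) identifies $(\aa\ltimes V)_{N+S}$ with $\aa_N\ltimes\hat V$ via Lemma~\ref{lem:semi}. Part (ii) applies (i) to the Nijenhuis pair $(N,S^{t})$ on $V^\vee$ and dualizes back, using the identity $\widehat{\zeta^*}=(\tilde\zeta)^*$; the paper writes this as $(\hat\xi)^*=\widetilde{\xi^*}$ for actions $\xi$ of $V^\vee$, which is the same computation read in the other direction.
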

\begin{proof}
(i) By Lemma~\ref{lem:semi}, $N+S$ is a Nijenhuis operator on $\aa\ltimes V$, and by \eqref{eq:semi}--\eqref{eq:semi2} the deformed products are given by
\begin{equation}
(x,u)\lt_{N+S}(y,v)=\big(x\lt_Ny,\ \hat\ell_\lt(x)v+\hat r_\lt(y)u\big),\qquad (x,u)\rt_{N+S}(y,v)=\big(x\rt_Ny,\ \hat\ell_\rt(x)v+\hat r_\rt(y)u\big),
\end{equation}
which implies that $\hat V$ is a representation of $\aa_N$.

(ii) For an action $\xi$ of $V^\vee$, we have $\hat\xi(x)=\xi(Nx)+\xi(x)S^{t}-S^{t}\xi(x)$, and thus
\begin{equation}
(\hat\xi)^*(x)=-\big(\xi(Nx)+\xi(x)S^{t}-S^{t}\xi(x)\big)^{t}=\xi^*(Nx)+S\xi^*(x)-\xi^*(x)S=\widetilde{\xi^*}(x).
\end{equation} Since $(N,S)$ is a dual-Nijenhuis pair, by Proposition~\ref{thm:dualpair} and $(V^\vee)^\vee=V$, $(N,S^{t})$ is a Nijenhuis pair with respect to $V^\vee$. By (i), $\widehat{V^\vee}$ is a representation of $\aa_N$, and thus, by Proposition~\ref{prop:dual}, so is $(\widehat{V^\vee})^\vee=\tilde V$.
\end{proof}

We now give an example of a perfect Nijenhuis pair, and of the corresponding trivial deformation.

\begin{exam}\label{ex:NP3}
Let $\aa$ and $V=(\aa,L_\lt,L_\rt,0,0)$ be as in Example~\ref{ex:3dim}. The linear maps commuting with $E$ and $P$ (that is, with all the actions of $V$) are exactly
\begin{equation}
N(e_1)=\lambda e_1,\qquad N(e_2)=\nu e_2,\qquad N(e_3)=\kappa e_2+\nu e_3\qquad(\lambda,\nu,\kappa\in\kk),
\end{equation}
since $N$ must preserve $\ker P=\kk e_1$ and $\operatorname{im}P$, and the commutant of the nonzero nilpotent map $E|_{\operatorname{im}P}$ is $\{\nu\Id+\kappa E\}$. For such $N$, $x\lt Ny=N(x\lt y)$, hence $x\lt_Ny=Nx\lt y=\lambda\,x\lt y$ and $N(x\lt_Ny)=L_\lt(Nx)Ny=Nx\lt Ny$; similarly for $\rt$. Thus $N$ is a Nijenhuis operator and $\aa_N=(\aa,\lambda\lt,\lambda\rt)$. Since $S=N$ commutes with every $\zeta(x)$, both sides of \eqref{eq:NP}, \eqref{eq:dNP} and \eqref{eq:perfect} reduce to the same expression, so $(N,N)$ is a perfect Nijenhuis pair, and $\hat V=\tilde V=(\aa,\lambda L_\lt,\lambda L_\rt,0,0)$. By Theorem~\ref{thm:trivial}, the deformation $\lt_t=(1+t\lambda)\lt$, $\rt_t=(1+t\lambda)\rt$, $\zeta_t=(1+t\lambda)\zeta$ is trivial; note that $N\notin\kk\Id$ when $\lambda\neq\nu$ or $\kappa\neq0$.
\end{exam}

The following example shows that Nijenhuis pairs and dual-Nijenhuis pairs are different notions.

\begin{exam}\label{ex:NPnotdual}
Let $\aa$ and $N$ be as in Example~\ref{ex:NP3}. Then $(N,N)$ is a Nijenhuis pair with the regular representation. The map $F:=R_\rt(e_2)$ is given by $F(e_1)=e_2$, $F(e_2)=F(e_3)=0$, so $NF-FN=(\nu-\lambda)F$ and $N^2F+FN^2-2NFN=[N,[N,F]]=(\nu-\lambda)^2F$. Since the right-hand sides of \eqref{eq:NP} and \eqref{eq:dNP} differ by $\zeta(x)S^2+S^2\zeta(x)-2S\zeta(x)S$, \eqref{eq:dNP} fails for $\zeta=R_\rt$, $x=e_2$ when $\lambda\neq\nu$. Hence $(N,N)$ is a Nijenhuis pair but not a dual-Nijenhuis pair. Dually, by Proposition~\ref{thm:dualpair} and the remark following it, $(N,N^{t})$ is a dual-Nijenhuis pair with respect to $\aa^\vee$ which is not a Nijenhuis pair when $\lambda\neq\nu$.
\end{exam}

\section{Kupershmidt-(dual-)Nijenhuis structures}\label{sec:ON}

In this section, we introduce Kupershmidt-(dual-)Nijenhuis structures on a pre-Leibniz algebra with a representation.

Let $K$ be a Kupershmidt operator on a pre-Leibniz algebra $(\aa,\lt,\rt)$ with respect to a representation $(V,\lL,\lR,\rL,\rR)$. On the pre-Leibniz algebra $V_K$, let $S\in\gl(V)$ and define the deformed operations
\begin{equation}
u\,(\lt_K)_S\,v=Su\lt_Kv+u\lt_KSv-S(u\lt_Kv),\qquad u\,(\rt_K)_S\,v=Su\rt_Kv+u\rt_KSv-S(u\rt_Kv).
\end{equation}
Now let $(N,S)$ be a Nijenhuis pair or a dual-Nijenhuis pair on $\aa$ with the representation $V$. By Corollary~\ref{cor:reps}, there are two representations $\hat V$ and $\tilde V$ of the deformed pre-Leibniz algebra $\aa_N$. We define operations on $V$ by
\begin{align}
u\lt^K_{\hat\rho}v&=\hat\ell_\lt(Ku)v+\hat r_\lt(Kv)u, & u\rt^K_{\hat\rho}v&=\hat\ell_\rt(Ku)v+\hat r_\rt(Kv)u,\label{eq:hatops}\\
u\lt^K_{\tilde\rho}v&=\tilde\ell_\lt(Ku)v+\tilde r_\lt(Kv)u, & u\rt^K_{\tilde\rho}v&=\tilde\ell_\rt(Ku)v+\tilde r_\rt(Kv)u.\label{eq:hatops3}
\end{align}
In general, these operations are not pre-Leibniz products on $V$; in the following, we study when they are.

\begin{definition}\label{def:ON}
Let $K:V\to\aa$ be a Kupershmidt operator on a pre-Leibniz algebra $\aa$ with respect to a representation $V$, and let $(N,S)$ be a Nijenhuis pair (resp.\ a dual-Nijenhuis pair). The triple $(K,S,N)$ is called a \emph{Kupershmidt-Nijenhuis structure} (resp.\ a \emph{Kupershmidt-dual-Nijenhuis structure}) on $\aa$ with respect to $V$ if
\begin{align}
NK&=KS,\label{eq:ON}\\
u\lt_{NK}v&=u\,(\lt_K)_S\,v,\label{eq:ON2a}\\
u\rt_{NK}v&=u\,(\rt_K)_S\,v,\qquad\forall\,u,v\in V,\label{eq:ON2}
\end{align}
where $u\lt_{NK}v=\lL(NKu)v+\rL(NKv)u$ and $u\rt_{NK}v=\lR(NKu)v+\rR(NKv)u$.
\end{definition}

\begin{remark}\label{rem:scaling}
If $(K,S,N)$ is a Kupershmidt-(dual-)Nijenhuis structure, then so is $(K,cS,cN)$ for every $c\in\kk$, since \eqref{eq:Nij}, \eqref{eq:NP} and \eqref{eq:dNP} are homogeneous in $(N,S)$ and \eqref{eq:ON}--\eqref{eq:ON2} are linear in $(N,S)$.
\end{remark}

As in the Leibniz case \cite{SJ}, we have the following.

\begin{lemma}\label{prop:42}
\begin{enumerate}[(i)]
\item If $(K,S,N)$ is a Kupershmidt-Nijenhuis structure on a pre-Leibniz algebra $\aa$ with respect to a representation $V$, then
$u(\lt_K)_Sv=u\lt^K_{\hat\rho}v$ and $u(\rt_K)_Sv=u\rt^K_{\hat\rho}v$ for all $u,v\in V$.
\item If $(K,S,N)$ is a Kupershmidt-dual-Nijenhuis structure on a pre-Leibniz algebra $\aa$ with respect to a representation $V$, then
$u(\lt_K)_Sv=u\lt^K_{\tilde\rho}v$ and $u(\rt_K)_Sv=u\rt^K_{\tilde\rho}v$ for all $u,v\in V$.
\end{enumerate}
\end{lemma}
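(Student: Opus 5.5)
Both parts follow by expanding the two sides and comparing them term by term, using only \eqref{eq:ON} and \eqref{eq:ON2a}--\eqref{eq:ON2}. The pair conditions \eqref{eq:NP} and \eqref{eq:dNP} are not needed beyond the fact, from Corollary~\ref{cor:reps}, that the operations \eqref{eq:hatops}--\eqref{eq:hatops3} make sense. I write the argument for $\lt$ only; the $\rt$ case is identical with $\lR,\rR$ in place of $\lL,\rL$.

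First I expand the deformed product on $V_K$ by its definition:
\begin{equation}
u(\lt_K)_Sv=\lL(KSu)v+\rL(Kv)Su+\lL(Ku)Sv+\rL(KSv)u-S\lL(Ku)v-S\rL(Kv)u .
\end{equation}
Next I expand the operations built from $\hat V$ using \eqref{eq:hattilde}:
\begin{equation}
u\lt^K_{\hat\rho}v=\lL(NKu)v+\lL(Ku)Sv-S\lL(Ku)v+\rL(NKv)u+\rL(Kv)Su-S\rL(Kv)u .
\end{equation}
Their difference is $\lL\big((KS-NK)u\big)v+\rL\big((KS-NK)v\big)u$. This vanishes by \eqref{eq:ON}, which proves (i).

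For (ii), I expand $u\lt^K_{\tilde\rho}v$ using \eqref{eq:hattilde2} and substitute $NK=KS$ in $u(\lt_K)_Sv$. The difference then becomes
\begin{equation}
u\lt^K_{\tilde\rho}v-u(\lt_K)_Sv=2\big(S(u\lt_Kv)-Su\lt_Kv\,'-u\lt_KSv\big),
\end{equation}
where $Su\lt_Kv\,'$ denotes only the term $\rL(Kv)Su$. In other words, the difference is
\begin{equation}
2\big(S\lL(Ku)v+S\rL(Kv)u-\lL(Ku)Sv-\rL(Kv)Su\big).
\end{equation}
Showing that this bracket vanishes is the only nontrivial step. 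I would get it from \eqref{eq:ON2a}. That condition says $\lL(NKu)v+\rL(NKv)u$ equals the expansion of $u(\lt_K)_Sv$ displayed above. Rewriting $\lL(KSu)=\lL(NKu)$ and $\rL(KSv)=\rL(NKv)$ there via \eqref{eq:ON} cancels these two terms from both sides, leaving exactly
\begin{equation}
\rL(Kv)Su+\lL(Ku)Sv-S\lL(Ku)v-S\rL(Kv)u=0 .
\end{equation}
Hence the difference vanishes, and (ii) follows.

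The computation also shows that, under \eqref{eq:ON}, condition \eqref{eq:ON2a} is equivalent to the vanishing of this "commutator" expression. This is why the two descriptions agree in the dual-Nijenhuis case as well.
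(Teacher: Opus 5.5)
Your proposal is correct and is essentially the paper's proof. Part (i) is the same term-by-term comparison using only $NK=KS$. In (ii) the paper writes $u(\lt_K)_Sv+u\lt^K_{\tilde\rho}v=2\,u\lt_{NK}v$ and then applies \eqref{eq:ON2a}; this is the same computation as your observation that $u\lt^K_{\tilde\rho}v-u(\lt_K)_Sv=-2\big(\lL(Ku)Sv+\rL(Kv)Su-S(u\lt_Kv)\big)$ vanishes by \eqref{eq:ON2a}.

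One cosmetic slip: your intermediate display with $Su\lt_Kv\,'$ is wrong as written, because the term $u\lt_KSv$ also contributes $-\rL(KSv)u$. The expression you give right after it (``in other words'') is the correct one, so the argument is unaffected.
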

\begin{proof}
Since $NK=KS$, we have $Su\lt_Kv=\lL(NKu)v+\rL(Kv)Su$ and $u\lt_KSv=\lL(Ku)Sv+\rL(NKv)u$. Thus
\begin{align}
u\,(\lt_K)_S\,v-u\lt_{NK}v&=Su\lt_Kv+u\lt_KSv-S(u\lt_Kv)-\lL(NKu)v-\rL(NKv)u\notag\\
&=\lL(Ku)Sv+\rL(Kv)Su-S(u\lt_Kv).\label{eq:key1}
\end{align}
Moreover, by \eqref{eq:hattilde}--\eqref{eq:hattilde2}, we have
\begin{align}
u\lt^K_{\hat\rho}v&=\lL(NKu)v+\lL(Ku)Sv-S\lL(Ku)v+\rL(NKv)u+\rL(Kv)Su-S\rL(Kv)u\notag\\
&=u\,(\lt_K)_S\,v,\label{eq:key2}\\
u\lt^K_{\tilde\rho}v&=\lL(NKu)v-\lL(Ku)Sv+S\lL(Ku)v+\rL(NKv)u-\rL(Kv)Su+S\rL(Kv)u\notag\\
&=u\lt_{NK}v+S(u\lt_Kv)-\lL(Ku)Sv-\rL(Kv)Su,\label{eq:key3}
\end{align}
and similarly for $\rt$. (i) follows from \eqref{eq:key2} directly. (ii) By \eqref{eq:key1} and \eqref{eq:key3}, we have $u\,(\lt_K)_S\,v+u\lt^K_{\tilde\rho}v=2\,u\lt_{NK}v$. Then by \eqref{eq:ON2a}--\eqref{eq:ON2}, we obtain $u\,(\lt_K)_S\,v=u\lt^K_{\tilde\rho}v$. The case of $\rt$ is similar.
\end{proof}

In particular, by \eqref{eq:key1}, if $NK=KS$, then \eqref{eq:ON2a}--\eqref{eq:ON2} is equivalent to the identities, for all $u,v\in V$,
\begin{equation}\label{eq:F}
\begin{aligned}
S(u\lt_Kv)&=\lL(Ku)Sv+\rL(Kv)Su,\\
S(u\rt_Kv)&=\lR(Ku)Sv+\rR(Kv)Su.
\end{aligned}
\end{equation}

\begin{proposition}\label{thm:SNij}
If $(K,S,N)$ is a Kupershmidt-Nijenhuis structure or a Kupershmidt-dual-Nijenhuis structure on a pre-Leibniz algebra $\aa$ with respect to a representation $V$, then $S$ is a Nijenhuis operator on the pre-Leibniz algebra $V_K$ and $(V_K)_S=V_{NK}$. Moreover, the operations $\big((\lt_K)_S,(\rt_K)_S\big)$, $(\lt^K_{\hat\rho},\rt^K_{\hat\rho})$ and $(\lt^K_{\tilde\rho},\rt^K_{\tilde\rho})$ all give rise to pre-Leibniz algebras.
\end{proposition}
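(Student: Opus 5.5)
The plan is to verify the Nijenhuis identity for $S$ on $V_K$ directly. The identity to prove is
\[
Su\lt_K Sv=S\big(u\,(\lt_K)_S\,v\big),\qquad Su\rt_K Sv=S\big(u\,(\rt_K)_S\,v\big).
\]
The key tools are \eqref{eq:ON}, the reformulation \eqref{eq:F} of \eqref{eq:ON2a}--\eqref{eq:ON2} (valid in both the Nijenhuis and dual-Nijenhuis cases), and the identity $u\,(\lt_K)_S\,v=u\lt_{NK}v$ from Definition~\ref{def:ON}.

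First expand the left side using $NK=KS$:
\[
Su\lt_KSv=\lL(NKu)Sv+\rL(NKv)Su .
\]
Then apply \eqref{eq:F} to the pair $(NK,\cdot)$. Since $u\lt_{NK}v=\lL(NKu)v+\rL(NKv)u$, we want to show
\[
S(u\lt_{NK}v)=\lL(NKu)Sv+\rL(NKv)Su .
\]
A cleaner route is to note that \eqref{eq:F} says precisely that $S$ intertwines the $K$-action. That is, for each action $\zeta$ and for $x=Ku$ we have $S\zeta(Ku)v$ expressed through $\zeta(Ku)S$. However, \eqref{eq:F} only gives sums, not each term separately, so this route needs care.

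I would therefore proceed as follows. Write $u\lt_{NK}v=u\,(\lt_K)_S\,v=Su\lt_Kv+u\lt_KSv-S(u\lt_Kv)$. Applying $S$ and using \eqref{eq:F} on each of the three terms gives
\[
S(Su\lt_Kv)=\lL(KSu)Sv+\rL(Kv)S^2u,\qquad S(u\lt_KSv)=\lL(Ku)S^2v+\rL(KSv)Su,
\]
and $S^2(u\lt_Kv)=\lL(Ku)S^2v+\rL(Kv)S^2u$, where the last one comes from applying \eqref{eq:F} twice. Summing with signs, the $S^2$ terms cancel. What remains is $\lL(NKu)Sv+\rL(NKv)Su=Su\lt_KSv$, which is the Nijenhuis identity. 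The same computation works for $\rt$ with $\lR,\rR$. Notably, this does not use \eqref{eq:NP} or \eqref{eq:dNP} at all, so both cases are handled uniformly.

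Once $S$ is Nijenhuis on the pre-Leibniz algebra $V_K$, the discussion before Lemma~\ref{lem:semi} shows that $(V_K)_S=(V,(\lt_K)_S,(\rt_K)_S)$ is a pre-Leibniz algebra. By \eqref{eq:ON2a}--\eqref{eq:ON2} its products coincide with $\lt_{NK},\rt_{NK}$, so $(V_K)_S=V_{NK}$. Finally, Lemma~\ref{prop:42} identifies $(\lt_K)_S$ with $\lt^K_{\hat\rho}$ in the Nijenhuis case and with $\lt^K_{\tilde\rho}$ in the dual case, and likewise for $\rt$, so these operations also give pre-Leibniz algebras.

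The main obstacle I expect is the second part of the last claim as literally stated: it asserts that both hatted and tilded operations give pre-Leibniz algebras in each case. In the Nijenhuis case, \eqref{eq:key1} and \eqref{eq:key3} together with \eqref{eq:F} give $u\lt^K_{\tilde\rho}v=u\lt_{NK}v$ as well. Indeed, \eqref{eq:key3} reads $u\lt^K_{\tilde\rho}v=u\lt_{NK}v+S(u\lt_Kv)-\lL(Ku)Sv-\rL(Kv)Su$, and the last three terms cancel by \eqref{eq:F}. Similarly, in the dual case \eqref{eq:key2} gives $u\lt^K_{\hat\rho}v=u\,(\lt_K)_S\,v$ unconditionally. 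So all three pairs of operations equal $(\lt_{NK},\rt_{NK})$ in both cases, and the conclusion follows.
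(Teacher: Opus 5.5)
There is a genuine gap. It is the step where you obtain $S^2(u\lt_Kv)=\lL(Ku)S^2v+\rL(Kv)S^2u$ ``by applying \eqref{eq:F} twice''. One application gives $S^2(u\lt_Kv)=S\big(\lL(Ku)Sv+\rL(Kv)Su\big)$. The argument $\lL(Ku)Sv+\rL(Kv)Su$ is not of the form $u'\lt_Kv'$: matching the first term forces $(u',v')=(u,Sv)$, which would produce $\rL(KSv)u$ instead of $\rL(Kv)Su$. So \eqref{eq:F} cannot be applied again. This is exactly the ``only sums'' problem you flagged yourself.

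The step is in fact circular. Since $\lL(Ku)Sv+\rL(Kv)Su=Su\lt_Kv+u\lt_KSv-u\lt_{NK}v$, applying \eqref{eq:F} at $(Su,v)$ and $(u,Sv)$ shows the following: given \eqref{eq:F}, your $S^2$-identity is equivalent to $S(u\lt_{NK}v)=Su\lt_KSv$. That is the Nijenhuis identity you are trying to prove. In the paper, your $S^2$-identity is the case $i=2$ of Lemma~\ref{lem:Si}, which is proved via Lemma~\ref{lem:NKstruct} and hence via this proposition.

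Your remark that \eqref{eq:NP} and \eqref{eq:dNP} are never used is the symptom: \eqref{eq:F} and $NK=KS$ alone do not imply the conclusion. Here is a counterexample.
\begin{itemize}
\item Let $\aa=\operatorname{span}\{e_1,e_2,e_3\}$, whose only nonzero product is $e_1\lt e_2=-e_3$.
\item Let $V=\aa$ with $\lL(x)v=-2x_1v_2e_3$, $\rL(y)u=y_2u_1e_3$ and $\lR=\rR=0$.
\item Every composition of two actions vanishes, and every action evaluated at a product vanishes, so (R1)--(R9) hold.
\item Since $u\lt_{\Id}v=-u_1v_2e_3=u\lt v$, the map $K=\Id_\aa$ is a Kupershmidt operator with $V_K=\aa$.
\item Take $S=N=\operatorname{diag}(0,1,2)$. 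Both sides of \eqref{eq:F} equal $-2u_1v_2e_3$ (and $0$ for $\rt$).
\end{itemize}
Yet
\[
Su\lt_KSv=0\neq 2u_1v_2e_3=S\big(u\,(\lt_K)_S\,v\big).
\]
Here $N$ is not a Nijenhuis operator, so the proposition is not contradicted, but your argument would apply verbatim.

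The missing ingredient is the pair condition, evaluated at $x=Ku$ for $\zeta=\lL,\lR$ and at $x=Kv$ for $\zeta=\rL,\rR$.
\begin{itemize}
\item In the Nijenhuis case, add \eqref{eq:NP} for $\zeta=\lL$ (at $x=Ku$, applied to $v$) and for $\zeta=\rL$ (at $x=Kv$, applied to $u$). Using $KS=NK$, this gives $Su\lt_KSv=S\big(u\,(\lt_K)_S\,v\big)$ directly, without even using \eqref{eq:F}.
\item In the dual case, the corresponding sum of \eqref{eq:dNP} must be combined with \eqref{eq:F} at $(Su,v)$, with \eqref{eq:F} at $(u,Sv)$, and with $S$ applied to \eqref{eq:F}. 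This yields $X=-X$ for $X=Su\lt_KSv-S\big(u\,(\lt_K)_S\,v\big)$, and $\operatorname{char}\kk=0$ gives $X=0$.
\end{itemize}

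Your final paragraph is correct once the Nijenhuis property is established: all three pairs of operations coincide with $(\lt_{NK},\rt_{NK})$, hence are pre-Leibniz.
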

\begin{proof}
We only prove the conclusion for $\lt$; the case of $\rt$ is similar. For the Kupershmidt-Nijenhuis structure, by \eqref{eq:NP} and substituting $x$ by $Ku$ for $\zeta=\lL$ and by $Kv$ for $\zeta=\rL$, and using $KS=NK$, we get
\begin{equation}
\begin{aligned}
0={}&\lL(NKu)Sv-S\big(\lL(NKu)v+\lL(Ku)Sv-S\lL(Ku)v\big)\\
&+\rL(NKv)Su-S\big(\rL(NKv)u+\rL(Kv)Su-S\rL(Kv)u\big)\\
={}&\lL(KSu)Sv+\rL(KSv)Su\\
&-S\big(\lL(KSu)v+\rL(Kv)Su+\lL(Ku)Sv+\rL(KSv)u-S(\lL(Ku)v+\rL(Kv)u)\big)\\
={}&Su\lt_KSv-S\big(Su\lt_Kv+u\lt_KSv-S(u\lt_Kv)\big),
\end{aligned}
\end{equation}
which implies that $S$ is a Nijenhuis operator on $V_K$.

For the Kupershmidt-dual-Nijenhuis structure, the proof is not direct. In fact, by \eqref{eq:key1} and \eqref{eq:ON2a}--\eqref{eq:ON2}, the identity \eqref{eq:F} holds, which gives, at $(Su,v)$, at $(u,Sv)$ and after applying $S$,
\begin{align}
S\big(\lL(KSu)v\big)+S\big(\rL(Kv)Su\big)&=\lL(KSu)Sv+\rL(Kv)S^2u,\label{eq:dn1}\\
S\big(\lL(Ku)Sv\big)+S\big(\rL(KSv)u\big)&=\lL(Ku)S^2v+\rL(KSv)Su,\label{eq:dn2}\\
S^2\big(\lL(Ku)v\big)+S^2\big(\rL(Kv)u\big)&=S\big(\lL(Ku)Sv\big)+S\big(\rL(Kv)Su\big).\label{eq:dn3}
\end{align}
By \eqref{eq:dNP} for $\zeta=\lL$ at $x=Ku$ and for $\zeta=\rL$ at $x=Kv$, we have
\begin{equation}\label{eq:dn4}
\begin{aligned}
&\lL(NKu)Sv-S\big(\lL(NKu)v\big)+\rL(NKv)Su-S\big(\rL(NKv)u\big)\\
&\quad=\lL(Ku)S^2v-S\big(\lL(Ku)Sv\big)+\rL(Kv)S^2u-S\big(\rL(Kv)Su\big).
\end{aligned}
\end{equation}
By \eqref{eq:dn1}--\eqref{eq:dn4}, \eqref{eq:F} and $KS=NK$, we have
\begin{align}
&Su\lt_KSv-S\big(u\,(\lt_K)_S\,v\big)\notag\\\displaybreak[0]
&=\lL(KSu)Sv+\rL(KSv)Su-S\big(\lL(KSu)v+\rL(Kv)Su+\lL(Ku)Sv+\rL(KSv)u\big)\notag\\\displaybreak[0]
&\quad+S^2\big(\lL(Ku)v+\rL(Kv)u\big)\notag\\\displaybreak[0]
&=\lL(KSu)Sv+\rL(KSv)Su-S\big(\lL(KSu)v\big)-S\big(\rL(KSv)u\big)\notag\\\displaybreak[0]
&=\lL(Ku)S^2v-S\big(\lL(Ku)Sv\big)+\rL(Kv)S^2u-S\big(\rL(Kv)Su\big)\notag\\\displaybreak[0]
&=S\big(\lL(Ku)Sv\big)+S\big(\rL(KSv)u\big)-\rL(KSv)Su-S\big(\lL(Ku)Sv\big)\notag\\\displaybreak[0]
&\quad+S\big(\lL(KSu)v\big)+S\big(\rL(Kv)Su\big)-\lL(KSu)Sv-S\big(\rL(Kv)Su\big)\notag\\\displaybreak[0]
&=S\big(\lL(KSu)v+\rL(KSv)u\big)-\big(\lL(KSu)Sv+\rL(KSv)Su\big)\notag\\\displaybreak[0]
&=S\big(Su\lt_Kv+u\lt_KSv\big)-S\big(\lL(Ku)Sv+\rL(Kv)Su\big)-Su\lt_KSv\notag\\\displaybreak[0]
&=S\big(u\,(\lt_K)_S\,v\big)-Su\lt_KSv .
\end{align}
Since $\operatorname{char}\kk=0$, we get $Su\lt_KSv=S\big(u\,(\lt_K)_S\,v\big)$. Thus $S$ is a Nijenhuis operator on $V_K$.

In both cases, by \eqref{eq:ON2a}--\eqref{eq:ON2}, $(V_K)_S=V_{NK}$. Moreover, by \eqref{eq:key1}--\eqref{eq:key3} and \eqref{eq:ON2a}--\eqref{eq:ON2}, the three operations in the statement coincide with the products of $V_{NK}$, so they are all pre-Leibniz products.
\end{proof}

A Kupershmidt-(dual-)Nijenhuis structure also produces new Kupershmidt operators, as the following result shows.

\begin{thm}\label{prop:ONprop}
Let $V$ be a representation of a pre-Leibniz algebra $\aa$, and suppose that $(K,S,N)$ is a Kupershmidt-Nijenhuis structure (resp.\ a Kupershmidt-dual-Nijenhuis structure) on $\aa$ with respect to $V$. Then
\begin{enumerate}[(i)]
\item $K$ is a Kupershmidt operator on the pre-Leibniz algebra $\aa_N$ with respect to the representation $\hat V$ (resp.\ $\tilde V$);
\item $NK$ is a Kupershmidt operator on $\aa$ with respect to the representation $V$.
\end{enumerate}
\end{thm}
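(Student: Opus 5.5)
Both parts follow quickly from Lemma~\ref{prop:42} and Proposition~\ref{thm:SNij}; the remaining work is mostly bookkeeping. I treat the Nijenhuis case with $\hat V$ and the dual case with $\tilde V$ in parallel, and only the product $\lt$, since $\rt$ is identical with $\lR,\rR$ in place of $\lL,\rL$.

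For (i), the required identities are
\begin{equation}
Ku\lt_N Kv=K\big(\hat\ell_\lt(Ku)v+\hat r_\lt(Kv)u\big)=K\big(u\lt^K_{\hat\rho}v\big).
\end{equation}
By Lemma~\ref{prop:42}, the right-hand side equals $K\big(u(\lt_K)_Sv\big)$, in both the Nijenhuis and dual-Nijenhuis cases. I expand this as
\begin{equation}
K(Su\lt_Kv)+K(u\lt_KSv)-KS(u\lt_Kv).
\end{equation}
Since $K$ is a Kupershmidt operator, $K(Su\lt_Kv)=KSu\lt Kv=NKu\lt Kv$, and in the same way $K(u\lt_KSv)=Ku\lt NKv$. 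Using $KS=NK$, the last term becomes $N(Ku\lt Kv)$. The sum is therefore $NKu\lt Kv+Ku\lt NKv-N(Ku\lt Kv)=Ku\lt_NKv$, which is (i). Here Corollary~\ref{cor:reps} guarantees that $\hat V$, respectively $\tilde V$, is indeed a representation of $\aa_N$, so the statement makes sense.

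For (ii), I need $NKu\lt NKv=NK(u\lt_{NK}v)$. Because $N$ is a Nijenhuis operator,
\begin{equation}
NKu\lt NKv=N\big(Ku\lt_NKv\big).
\end{equation}
Applying (i), this equals $NK\big(u(\lt_K)_Sv\big)$. By \eqref{eq:ON2a}, $u(\lt_K)_Sv=u\lt_{NK}v$, and the claim follows. Alternatively, I could argue structurally: by Proposition~\ref{thm:SNij}, $S$ is Nijenhuis on $V_K$ with $(V_K)_S=V_{NK}$, so $S:V_{NK}\to V_K$ is a morphism; composing with the morphism $K:V_K\to\aa$ gives that $KS=NK$ is a morphism $V_{NK}\to\aa$, which is exactly the Kupershmidt condition.

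I do not expect a genuine obstacle. The only point that needs care is that (i) relies on Lemma~\ref{prop:42}(ii) in the dual case, and that lemma already absorbs the factor-$2$ argument (valid since $\operatorname{char}\kk=0$). Beyond that, one has to check that the $\rt$ computations use the matching actions $\lR,\rR$ and the matching identities \eqref{eq:ON2}.
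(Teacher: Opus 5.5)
Your proof is correct and follows the paper's argument essentially verbatim: (i) rewrites $u\lt^K_{\hat\rho}v$ (resp.\ $u\lt^K_{\tilde\rho}v$) as $u(\lt_K)_Sv$ via Lemma~\ref{prop:42}, then expands using the Kupershmidt property and $KS=NK$; (ii) combines the Nijenhuis identity for $N$ with (i) and \eqref{eq:ON2a}. Your alternative morphism argument for (ii) via Proposition~\ref{thm:SNij} is also valid, since that proposition does not depend on this theorem. One small correction to a side remark: the step in Lemma~\ref{prop:42}(ii) is a plain subtraction and does not use $\operatorname{char}\kk=0$.
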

\begin{proof}
We only prove the theorem for the Kupershmidt-Nijenhuis structure. The other one can be proved similarly.

(i) Since $K$ is a Kupershmidt operator and $KS=NK$, by Lemma~\ref{prop:42}, we have
\begin{equation}
\begin{aligned}
K\big(u\lt^K_{\hat\rho}v\big)&=K\big(u\,(\lt_K)_S\,v\big)=K\big(Su\lt_Kv+u\lt_KSv-S(u\lt_Kv)\big)\\
&=KSu\lt Kv+Ku\lt KSv-KS(u\lt_Kv)\\
&=NKu\lt Kv+Ku\lt NKv-N(Ku\lt Kv)=Ku\lt_NKv,
\end{aligned}
\end{equation}
and similarly for $\rt$. Thus $K$ is a Kupershmidt operator on $\aa_N$ with respect to $\hat V$.

(ii) By \eqref{eq:ON2a}--\eqref{eq:ON2} and (i), we have
\begin{equation}
NK(u\lt_{NK}v)=NK\big(u\,(\lt_K)_S\,v\big)=N(Ku\lt_NKv)=NKu\lt NKv,
\end{equation}
and similarly for $\rt$, which implies that $NK$ is a Kupershmidt operator on $\aa$ with respect to $V$.
\end{proof}

For an invertible (in fact, surjective) Kupershmidt operator, the two notions of Kupershmidt-Nijenhuis and Kupershmidt-dual-Nijenhuis structures are related as follows.

\begin{thm}\label{thm:invON}
Suppose that $(K,S,N)$ is a Kupershmidt-Nijenhuis structure on a pre-Leibniz algebra $\aa$ with respect to a representation $V$, with $K$ surjective (for instance, invertible). Then $(K,S,N)$ is also a Kupershmidt-dual-Nijenhuis structure. Consequently, $(N,S)$ is a perfect Nijenhuis pair.
\end{thm}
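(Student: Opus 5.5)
Since $NK=KS$ and \eqref{eq:ON2a}--\eqref{eq:ON2} are already part of the hypotheses, we only need to show that $(N,S)$ is a dual-Nijenhuis pair, i.e.\ that \eqref{eq:dNP} holds for each action $\zeta$. By the remark after Definition~\ref{def:perfect}, the right-hand sides of \eqref{eq:NP} and \eqref{eq:dNP} differ by $\zeta(x)S^2+S^2\zeta(x)-2S\zeta(x)S$. Since \eqref{eq:NP} holds, it therefore suffices to prove the perfectness identity \eqref{eq:perfect} for every $\zeta$ and every $x\in\aa$. Because $K$ is surjective, we may write $x=Ku$, so the task becomes proving $S^2\zeta(Ku)+\zeta(Ku)S^2=2S\zeta(Ku)S$ for all $u\in V$. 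Once this is done, the pair is both Nijenhuis and dual-Nijenhuis, hence perfect, which gives the last sentence of the statement.

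The tool is the identity \eqref{eq:F}, which holds because $NK=KS$ together with \eqref{eq:ON2a}--\eqref{eq:ON2}. Taking its $\lL$- and $\rL$-parts, I would use it in the form
\[
S\lL(Ku)v+S\rL(Kv)u=\lL(Ku)Sv+\rL(Kv)Su .
\]
Since this mixes two actions, I need to separate them. Replacing $v$ by $Sv$ and also applying $S$ to both sides gives two new equations, in the manner of \eqref{eq:dn1}--\eqref{eq:dn3}. Combining these with \eqref{eq:NP} evaluated at $x=Ku$ (using $NKu=KSu$) should isolate $S^2\lL(Ku)+\lL(Ku)S^2-2S\lL(Ku)S$ applied to $v$, with the $\rL$-terms cancelling. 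The analogous manipulation, this time substituting $u\mapsto Su$, handles $\rL(Kv)$; the $\rt$-identities handle $\lR$ and $\rR$.

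There is a cleaner alternative that avoids this bookkeeping. Apply the argument of Proposition~\ref{thm:SNij}, first part: $S$ is a Nijenhuis operator on $V_K$. Compare this with the dual-Nijenhuis computation there, which was derived from \eqref{eq:F} together with \eqref{eq:dNP}, and run it backwards: from \eqref{eq:F} and the Nijenhuis property of $S$ on $V_K$, deduce \eqref{eq:dn4}, which is \eqref{eq:dNP} summed over the $\lL$ and $\rL$ contributions. Because $K$ is surjective and $u,v$ are arbitrary, one can then try to split \eqref{eq:dn4} into its $\lL$-part and its $\rL$-part. This is done by setting $v$ or $u$ so that one term is isolated, or by comparing the equations obtained with $u$ replaced by $Su$.

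The main obstacle is exactly this separation of the contributions of $\lL$ and $\rL$ (respectively $\lR$ and $\rR$). The identities \eqref{eq:F} and \eqref{eq:dn4} only control the sums $\lL(Ku)v+\rL(Kv)u$. To handle this, I would first exploit the asymmetry of the terms in $u$ and $v$. Concretely, I would write the defect $D_\zeta(x):=\zeta(Nx)S-S\zeta(Nx)-\zeta(x)S^2+S\zeta(x)S$, so that \eqref{eq:dn4} reads $D_{\lL}(Ku)v+D_{\rL}(Kv)u=0$. Using \eqref{eq:NP} to rewrite $D_\zeta(x)=S^2\zeta(x)+\zeta(x)S^2-2S\zeta(x)S$, which is independent of $N$, the task is then to show that each term vanishes separately. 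If the direct splitting fails, the fallback is to use \eqref{eq:F} itself, which is the statement $[S,\lL(Ku)]v=-[S,\rL(Kv)]u$. Applying it twice, with the substitutions $v\mapsto Sv$ and then applying $S$, yields $[S,[S,\lL(Ku)]]v=[S,[S,\rL(Kv)]]u$ up to sign. Combining this with $D_{\lL}(Ku)v=-D_{\rL}(Kv)u$, where $D_\zeta(x)=[S,[S,\zeta(x)]]$, forces both sides to vanish in characteristic $0$.
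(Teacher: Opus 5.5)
Your first route (the second paragraph) is correct, and it is essentially the paper's proof. You shift one argument of \eqref{eq:F} by $S$, use $KS=NK$ to turn that argument into an action evaluated at $NKu$ or $NKv$, and then cancel the cross terms with the Nijenhuis-pair identity \eqref{eq:NP}, which reads $[S,\zeta(Nx)]=S[S,\zeta(x)]$. Surjectivity of $K$ finishes. The paper first combines \eqref{eq:NP} and \eqref{eq:F} into \eqref{eq:G}, the Nijenhuis property of $S$ on $V_K$. It then subtracts \eqref{eq:F} at $(Su,v)$ and lands directly on \eqref{eq:dNP}. You aim instead at $[S,[S,\zeta(x)]]=0$, which is equivalent given \eqref{eq:NP}.

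There is one slip to fix. After $v\mapsto Sv$ the cross term is $[S,\rL(KSv)]u=[S,\rL(NKv)]u$, so the instance you need is \eqref{eq:NP} for $\zeta=\rL$ at $x=Kv$, not at $x=Ku$. Then \eqref{eq:F} at $(u,Sv)$ minus $S$ applied to \eqref{eq:F} gives
\[
[S,[S,\lL(Ku)]]v=[S,\rL(NKv)]u-S[S,\rL(Kv)]u=0 .
\]
The $(Su,v)$ version, with \eqref{eq:NP} for $\lL$ at $x=Ku$, gives $[S,[S,\rL(Kv)]]u=0$. No characteristic assumption is needed.

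You should drop the last two paragraphs, because the splitting they rely on does not work.
\begin{itemize}
\item Identity \eqref{eq:dn4} only says that the sum of two bilinear expressions in $(u,v)$ vanishes. Specializing $u$ or $v$ to $0$, or into $\ker K$, cannot separate the two terms, and the latter is vacuous when $K$ is invertible.
\item In the fallback, the substitution $v\mapsto Sv$ in \eqref{eq:F} produces $\rL(KSv)=\rL(NKv)$, not $\rL(Kv)S$. So you do not obtain $\pm[S,[S,\rL(Kv)]]u$ directly.
\item If you remove the $N$-terms using \eqref{eq:G} and \eqref{eq:F} at $(Su,v)$, you get $[S,[S,\lL(Ku)]]v=-[S,[S,\rL(Kv)]]u$. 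That is \eqref{eq:dn4} again, with the same sign, so the characteristic-$0$ step has nothing to act on.
\item If you remove the $N$-terms using \eqref{eq:NP} instead, you are back in your first route, where each double commutator vanishes outright.
\end{itemize}
A minor point: given \eqref{eq:NP}, your defect is $D_\zeta(x)=-[S,[S,\zeta(x)]]$, not $+[S,[S,\zeta(x)]]$. This is harmless for the vanishing argument.
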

\begin{proof}
We only need to prove that the Nijenhuis pair $(N,S)$ is also a dual-Nijenhuis pair. By \eqref{eq:key1} and \eqref{eq:ON2a}--\eqref{eq:ON2}, the identity \eqref{eq:F} holds. Since $S$ is a Nijenhuis operator on $V_K$ and $u\,(\lt_K)_S\,v=u\lt_{NK}v$, by Proposition~\ref{thm:SNij}, we have $S(u\lt_{NK}v)=Su\lt_KSv$, which means that
\begin{equation}\label{eq:G}
S\big(\lL(NKu)v\big)+S\big(\rL(NKv)u\big)=\lL(NKu)Sv+\rL(NKv)Su .
\end{equation}
By \eqref{eq:F} at $(Su,v)$ and $KS=NK$, we have
\begin{equation}
S\big(\lL(NKu)v\big)-\lL(NKu)Sv=\rL(Kv)S^2u-S\big(\rL(Kv)Su\big).
\end{equation}
Thus \eqref{eq:G} implies that
\begin{equation}
\begin{aligned}
0&=S\big(\lL(NKu)v\big)-\lL(NKu)Sv+S\big(\rL(NKv)u\big)-\rL(NKv)Su\\
&=\rL(Kv)S^2u-S\big(\rL(Kv)Su\big)+S\big(\rL(NKv)u\big)-\rL(NKv)Su .
\end{aligned}
\end{equation}
Since $K$ is surjective, letting $x=Kv$, we get $\rL(Nx)Su=S\big(\rL(Nx)u\big)+\rL(x)S^2u-S\big(\rL(x)Su\big)$, which is \eqref{eq:dNP} for $\zeta=\rL$. Similarly, by \eqref{eq:G} and \eqref{eq:F} at $(u,Sv)$, we obtain \eqref{eq:dNP} for $\zeta=\lL$, and the $\rt$-parts give \eqref{eq:dNP} for $\zeta=\lR,\rR$. Thus $(N,S)$ is a dual-Nijenhuis pair, and hence a perfect Nijenhuis pair. We finish the proof.
\end{proof}

\section{Compatible Kupershmidt operators and hierarchies}\label{sec:hier}

In this section, we relate Kupershmidt-(dual-)Nijenhuis structures to compatible Kupershmidt operators, and construct hierarchies of pairwise compatible Kupershmidt operators.

\subsection{Compatibility of Kupershmidt operators}\label{subsec:compatN}

We now turn to compatible Kupershmidt operators, which are closely related to Kupershmidt-(dual-)Nijenhuis structures.

\begin{definition}\label{def:compat}
Let $K_1,K_2$ be two Kupershmidt operators on a pre-Leibniz algebra $\aa$ with respect to a representation $V$. We say that $K_1$ and $K_2$ are \emph{compatible} if $n_1K_1+n_2K_2$ is a Kupershmidt operator for all $n_1,n_2\in\kk$.
\end{definition}

\begin{proposition}\label{prop:compateq}
Let $K_1,K_2$ be two Kupershmidt operators on a pre-Leibniz algebra $\aa$ with respect to a representation $V$. Then $K_1$ and $K_2$ are compatible if and only if, for all $u,v\in V$,
\begin{align}
K_1u\lt K_2v+K_2u\lt K_1v&=K_1(u\lt_{K_2}v)+K_2(u\lt_{K_1}v),\label{eq:compat}\\
K_1u\rt K_2v+K_2u\rt K_1v&=K_1(u\rt_{K_2}v)+K_2(u\rt_{K_1}v).\label{eq:compat2}
\end{align}
\end{proposition}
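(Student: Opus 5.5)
The plan is to expand the Kupershmidt identity for $K=n_1K_1+n_2K_2$ as a polynomial in $(n_1,n_2)$ and compare coefficients. The key observation is that $K\mapsto u\lt_K v=\lL(Ku)v+\rL(Kv)u$ is linear in $K$, and likewise for $\rt_K$. Hence
\[
u\lt_{n_1K_1+n_2K_2}v=n_1\,u\lt_{K_1}v+n_2\,u\lt_{K_2}v .
\]
Consequently both sides of the first identity in \eqref{eq:O} for $K=n_1K_1+n_2K_2$ are quadratic forms in $(n_1,n_2)$.

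Next we expand both sides. The left side is
\[
Ku\lt Kv=n_1^2\,K_1u\lt K_1v+n_2^2\,K_2u\lt K_2v+n_1n_2\big(K_1u\lt K_2v+K_2u\lt K_1v\big).
\]
The right side is
\[
K(u\lt_Kv)=n_1^2K_1(u\lt_{K_1}v)+n_2^2K_2(u\lt_{K_2}v)+n_1n_2\big(K_1(u\lt_{K_2}v)+K_2(u\lt_{K_1}v)\big).
\]
Since $K_1$ and $K_2$ are Kupershmidt operators, the $n_1^2$ and $n_2^2$ coefficients already agree. The difference of the two sides is therefore $n_1n_2$ times the difference of the two sides of \eqref{eq:compat}. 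The same computation with $\rt$ in place of $\lt$ produces $n_1n_2$ times the defect of \eqref{eq:compat2}.

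For the "if" direction: if \eqref{eq:compat}--\eqref{eq:compat2} hold, both differences vanish for every $n_1,n_2$, so $n_1K_1+n_2K_2$ is a Kupershmidt operator. For the "only if" direction: compatibility gives vanishing for all $n_1,n_2\in\kk$, and taking $n_1=n_2=1$ yields \eqref{eq:compat}--\eqref{eq:compat2}.

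There is no real obstacle here. The only point to state explicitly is the linearity of $\lt_K$ and $\rt_K$ in $K$, which follows immediately from the linearity of the actions $\lL,\rL,\lR,\rR$.
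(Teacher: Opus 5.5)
Your proposal is correct and is essentially the paper's proof. You expand the Kupershmidt identity for $n_1K_1+n_2K_2$ using the linearity of $\lt_K$ and $\rt_K$ in $K$, cancel the $n_1^2$ and $n_2^2$ terms by the Kupershmidt property of $K_1$ and $K_2$, and find that the remaining defect is $n_1n_2$ times the defect of \eqref{eq:compat}--\eqref{eq:compat2}. Your explicit choice $n_1=n_2=1$ for the ``only if'' direction just spells out a step the paper leaves implicit.
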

\begin{proof}
Let $n_1,n_2\in\kk$ and $K=n_1K_1+n_2K_2$. Since $u\lt_Kv=n_1\,u\lt_{K_1}v+n_2\,u\lt_{K_2}v$, we have
\begin{equation}
\begin{aligned}
Ku\lt Kv-K(u\lt_Kv)={}&n_1^2\big(K_1u\lt K_1v-K_1(u\lt_{K_1}v)\big)+n_2^2\big(K_2u\lt K_2v-K_2(u\lt_{K_2}v)\big)\\
&+n_1n_2\big(K_1u\lt K_2v+K_2u\lt K_1v-K_1(u\lt_{K_2}v)-K_2(u\lt_{K_1}v)\big)\\
={}&n_1n_2\big(K_1u\lt K_2v+K_2u\lt K_1v-K_1(u\lt_{K_2}v)-K_2(u\lt_{K_1}v)\big),
\end{aligned}
\end{equation}
since $K_1$ and $K_2$ are Kupershmidt operators. Thus $Ku\lt Kv=K(u\lt_Kv)$ for all $n_1,n_2$ if and only if \eqref{eq:compat} holds. Similarly for $\rt$ with \eqref{eq:compat2}.
\end{proof}

Composing a Kupershmidt operator with a Nijenhuis operator is a natural way to obtain a second Kupershmidt operator; the next result, a pre-Leibniz version of \cite[Proposition~4.3]{HLS}, describes when this works and when the two operators are compatible.

\begin{proposition}\label{prop:NKcompat}
Let $K:V\to\aa$ be a Kupershmidt operator on a pre-Leibniz algebra $\aa$ with respect to a representation $V$, and let $N$ be a Nijenhuis operator on $\aa$. The composition $NK$ is again a Kupershmidt operator with respect to $V$ precisely when, for all $u,v\in V$,
\begin{align}
N\big(NKu\lt Kv+Ku\lt NKv\big)&=N\big(K(u\lt_{NK}v)+NK(u\lt_Kv)\big),\label{eq:NKcomp1}\\
N\big(NKu\rt Kv+Ku\rt NKv\big)&=N\big(K(u\rt_{NK}v)+NK(u\rt_Kv)\big).\label{eq:NKcomp2}
\end{align}
In this case, if moreover $N$ is invertible, then $K$ and $NK$ are compatible.
\end{proposition}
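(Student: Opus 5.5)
Throughout I will use only the Nijenhuis identity \eqref{eq:Nij} for $N$ and the Kupershmidt identity \eqref{eq:O} for $K$; the case of $\rt$ is identical, so I treat $\lt$ only.

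\textbf{Reformulating the Kupershmidt condition for $NK$.} By \eqref{eq:Nij} applied to $x=Ku$, $y=Kv$,
\begin{equation}
NKu\lt NKv=N\big(NKu\lt Kv+Ku\lt NKv-N(Ku\lt Kv)\big).
\end{equation}
Since $K$ is a Kupershmidt operator, $Ku\lt Kv=K(u\lt_Kv)$, so the last term equals $N^2K(u\lt_Kv)=N\big(NK(u\lt_Kv)\big)$. Hence
\begin{equation}
NKu\lt NKv-NK(u\lt_{NK}v)=N\big(NKu\lt Kv+Ku\lt NKv\big)-N\big(K(u\lt_{NK}v)+NK(u\lt_Kv)\big).
\end{equation}
The left-hand side vanishes for all $u,v$ exactly when the $\lt$-part of \eqref{eq:O} holds for $NK$, and the right-hand side vanishes exactly when \eqref{eq:NKcomp1} holds. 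Together with the analogous computation for $\rt$, this proves the equivalence.

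\textbf{Compatibility when $N$ is invertible.} Now assume $NK$ is a Kupershmidt operator and $N$ is invertible. Cancelling $N$ in \eqref{eq:NKcomp1}--\eqref{eq:NKcomp2} gives
\begin{equation}
NKu\lt Kv+Ku\lt NKv=K(u\lt_{NK}v)+NK(u\lt_Kv),
\end{equation}
and similarly for $\rt$. With $K_1=K$ and $K_2=NK$, these are precisely \eqref{eq:compat}--\eqref{eq:compat2}. By Proposition~\ref{prop:compateq}, $K$ and $NK$ are therefore compatible.

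The argument involves no real obstacle. The only point needing care is to rewrite the term $N(N(Ku\lt Kv))$ as $N(NK(u\lt_Kv))$ using the Kupershmidt property of $K$, so that both sides match the form of \eqref{eq:NKcomp1}.
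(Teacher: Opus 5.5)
Your proposal is correct and follows essentially the same argument as the paper. You apply the Nijenhuis identity at $(Ku,Kv)$ and rewrite $N^2(Ku\lt Kv)$ as $N\big(NK(u\lt_Kv)\big)$ using the Kupershmidt property of $K$. For invertible $N$ you cancel $N$ to reach the compatibility condition of Proposition~\ref{prop:compateq}.
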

\begin{proof}
Since $N$ is a Nijenhuis operator, we have
\begin{equation}
NKu\lt NKv=N\big(NKu\lt Kv+Ku\lt NKv\big)-N^2(Ku\lt Kv).
\end{equation}
Note that $Ku\lt Kv=K(u\lt_Kv)$. Then $NKu\lt NKv=NK(u\lt_{NK}v)$ if and only if \eqref{eq:NKcomp1} holds, and similarly for $\rt$. If $NK$ is a Kupershmidt operator and $N$ is invertible, then we have
\begin{equation}
NKu\lt Kv+Ku\lt NKv=K(u\lt_{NK}v)+NK(u\lt_Kv),
\end{equation}
and similarly for $\rt$, which is exactly the condition that $K$ and $NK$ are compatible.
\end{proof}

In the opposite direction, when one of two compatible Kupershmidt operators is invertible, their quotient is a Nijenhuis operator.

\begin{proposition}\label{prop:compatNij}
Let $K_1,K_2$ be two compatible Kupershmidt operators on a pre-Leibniz algebra $\aa$ with respect to a representation $V$, and suppose that $K_2$ is invertible. Then $N=K_1K_2^{-1}$ is a Nijenhuis operator on $\aa$.
\end{proposition}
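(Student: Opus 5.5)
Since $K_2$ is invertible, we may write any $x,y\in\aa$ as $x=K_2u$, $y=K_2v$ with $u=K_2^{-1}x$, $v=K_2^{-1}y$. Then $Nx=K_1u$ and $Ny=K_1v$. We verify \eqref{eq:Nij} for $\lt$; the case of $\rt$ is identical, with \eqref{eq:compat2} in place of \eqref{eq:compat}.

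The first step is to use that $K_2$ is a Kupershmidt operator to write $x\lt y=K_2(u\lt_{K_2}v)$. This gives
\begin{equation}
N(x\lt y)=K_1K_2^{-1}K_2(u\lt_{K_2}v)=K_1(u\lt_{K_2}v).
\end{equation}
Next, the compatibility identity \eqref{eq:compat} reads $Nx\lt y+x\lt Ny=K_1u\lt K_2v+K_2u\lt K_1v=K_1(u\lt_{K_2}v)+K_2(u\lt_{K_1}v)$. Combining the two, we get
\begin{equation}
x\lt_Ny=Nx\lt y+x\lt Ny-N(x\lt y)=K_2(u\lt_{K_1}v).
\end{equation}
Applying $N$ then yields $N(x\lt_Ny)=K_1(u\lt_{K_1}v)$. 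Since $K_1$ is a Kupershmidt operator, this equals $K_1u\lt K_1v=Nx\lt Ny$, which is \eqref{eq:Nij}.

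Every step is a direct substitution. The only point needing care is the bookkeeping of which of $K_1,K_2$ appears in the subscript products $\lt_{K_i}$ at each stage.
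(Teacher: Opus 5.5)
Your proof is correct and follows essentially the same route as the paper: the substitution $x=K_2u$, $y=K_2v$, the Kupershmidt identities for $K_1=NK_2$ and for $K_2$, and the compatibility identity \eqref{eq:compat}. The only difference is one of order: you first isolate $x\lt_Ny=K_2(u\lt_{K_1}v)$ and then apply $N$, whereas the paper applies $N$ directly to the compatibility identity and compares the result with $NK_2(u\lt_{NK_2}v)$.
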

\begin{proof}
For all $x,y\in\aa$, there exist $u,v\in V$ such that $K_2u=x$ and $K_2v=y$. Hence $N=K_1K_2^{-1}$ is a Nijenhuis operator if and only if
\begin{equation}
NK_2u\lt NK_2v=N\big(NK_2u\lt K_2v+K_2u\lt NK_2v\big)-N^2(K_2u\lt K_2v),
\end{equation}
and similarly for $\rt$. Since $K_1=NK_2$ is a Kupershmidt operator, the left-hand side is $NK_2(u\lt_{NK_2}v)$. Since $K_2$ is a Kupershmidt operator which is compatible with $K_1=NK_2$, we have
\begin{equation}
\begin{aligned}
NK_2u\lt K_2v+K_2u\lt NK_2v&=K_2(u\lt_{NK_2}v)+NK_2(u\lt_{K_2}v)\\
&=K_2(u\lt_{NK_2}v)+N(K_2u\lt K_2v).
\end{aligned}
\end{equation}
Applying $N$ to both sides, we get the conclusion.
\end{proof}

Combining Propositions~\ref{prop:NKcompat} and~\ref{prop:compatNij}, we obtain the following characterization.

\begin{corollary}\label{cor:invcompat}
Let $K_1,K_2$ be two invertible Kupershmidt operators on a pre-Leibniz algebra $\aa$ with respect to a representation $V$. Then $K_1$ and $K_2$ are compatible if and only if $N=K_1K_2^{-1}$ is a Nijenhuis operator on $\aa$.
\end{corollary}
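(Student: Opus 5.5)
The two implications come from Proposition~\ref{prop:compatNij} and Proposition~\ref{prop:NKcompat}; the only extra work is checking their hypotheses.

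\emph{Compatible $\Rightarrow$ Nijenhuis.} Assume $K_1$ and $K_2$ are compatible. Since $K_2$ is invertible, Proposition~\ref{prop:compatNij} applies directly and gives that $N=K_1K_2^{-1}$ is a Nijenhuis operator on $\aa$. The invertibility of $K_1$ is not needed here.

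\emph{Nijenhuis $\Rightarrow$ compatible.} Assume $N=K_1K_2^{-1}$ is a Nijenhuis operator. I will apply Proposition~\ref{prop:NKcompat} with $K=K_2$ and this $N$.

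Its first hypothesis is that $NK_2$ is a Kupershmidt operator. This holds because $NK_2=K_1$, and $K_1$ is a Kupershmidt operator by assumption. Proposition~\ref{prop:NKcompat} says this is equivalent to \eqref{eq:NKcomp1}--\eqref{eq:NKcomp2}, so those identities hold automatically and need not be checked by hand.

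Its second hypothesis is that $N$ is invertible. This holds because $N=K_1K_2^{-1}$ is a product of invertible maps.

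The proposition then yields that $K_2$ and $NK_2=K_1$ are compatible. Compatibility in Definition~\ref{def:compat} is symmetric in the two operators, so this is the desired conclusion.

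The only point needing care is this invertibility of $N$. It is where the hypothesis that $K_1$ is invertible is used: it lets us cancel the outer $N$ in \eqref{eq:NKcomp1}--\eqref{eq:NKcomp2}, and that cancellation is what produces \eqref{eq:compat}--\eqref{eq:compat2}. Beyond this, the argument is bookkeeping.
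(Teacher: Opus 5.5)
Your proof is correct and is exactly the paper's argument. Proposition~\ref{prop:compatNij} gives the forward direction using only the invertibility of $K_2$, and Proposition~\ref{prop:NKcompat} with $K=K_2$ gives the converse, since $NK_2=K_1$ is a Kupershmidt operator and $N=K_1K_2^{-1}$ is invertible.
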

\begin{proof}
It follows from Propositions~\ref{prop:NKcompat} and~\ref{prop:compatNij}.
\end{proof}

\begin{corollary}\label{cor:invRB}
Let $R_1,R_2$ be two invertible Rota--Baxter operators on a pre-Leibniz algebra $\aa$. Then $R_1R_2^{-1}$ is a Nijenhuis operator on $\aa$ exactly when all the linear combinations $n_1R_1+n_2R_2$ ($n_1,n_2\in\kk$) are Rota--Baxter operators.
\end{corollary}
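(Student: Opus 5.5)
This is a specialization of Corollary~\ref{cor:invcompat} to the regular representation $(\aa,L_\lt,L_\rt,R_\lt,R_\rt)$. For that representation the Kupershmidt operators are exactly the Rota--Baxter operators. Indeed, for $K=R:\aa\to\aa$ one computes $x\lt_R y=L_\lt(Rx)y+R_\lt(Ry)x=Rx\lt y+x\lt Ry$, and similarly for $\rt$. So \eqref{eq:O} becomes $Rx\lt Ry=R(Rx\lt y+x\lt Ry)$ and $Rx\rt Ry=R(Rx\rt y+x\rt Ry)$, which is the definition of a Rota--Baxter operator recalled at the end of Section~\ref{sec:pre}.

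By Definition~\ref{def:compat}, the condition that $n_1R_1+n_2R_2$ be a Rota--Baxter operator for all $n_1,n_2\in\kk$ is then precisely the statement that $R_1$ and $R_2$ are compatible Kupershmidt operators with respect to the regular representation. Since $R_1$ and $R_2$ are invertible, Corollary~\ref{cor:invcompat} with $V=\aa$, $K_1=R_1$ and $K_2=R_2$ gives both directions: $R_1$ and $R_2$ are compatible if and only if $R_1R_2^{-1}$ is a Nijenhuis operator on $\aa$.

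The argument has no substantial obstacle. The only point to verify is that the products $\lt_K,\rt_K$ for the regular representation unfold to the Rota--Baxter expressions. This uses the convention $R_\lt(y)x=x\lt y$, so that the $\rL$-term $\rL(Ry)x$ equals $x\lt Ry$; this is where the order of the arguments has to be checked.
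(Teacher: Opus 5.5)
Your proposal is correct and is the paper's argument: the paper gives no separate proof, treating Corollary~\ref{cor:invRB} as Corollary~\ref{cor:invcompat} specialized to the regular representation, where Kupershmidt operators are exactly Rota--Baxter operators and compatibility is exactly the condition on all $n_1R_1+n_2R_2$. Your check that $\rL(Ry)x=R_\lt(Ry)x=x\lt Ry$ is the only detail that needs verifying, and you have it right.
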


\subsection{Hierarchies generated by Kupershmidt-(dual-)Nijenhuis structures}\label{subsec:hierarchy}

We now return to Kupershmidt-(dual-)Nijenhuis structures $(K,S,N)$. Theorem~\ref{prop:ONprop} already tells us that both $K$ and $NK=KS$ are Kupershmidt operators; our first observation is that this pair is compatible.

\begin{proposition}\label{prop:47}
Suppose that $K$ is a Kupershmidt operator on a pre-Leibniz algebra $\aa$ with respect to a representation $V$. If $(K,S,N)$ is a Kupershmidt-Nijenhuis structure or a Kupershmidt-dual-Nijenhuis structure, then $K$ and $KS=NK$ are compatible Kupershmidt operators.
\end{proposition}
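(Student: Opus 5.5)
By Theorem~\ref{prop:ONprop}(ii), $K$ and $NK=KS$ are both Kupershmidt operators. By Proposition~\ref{prop:compateq}, it therefore suffices to verify \eqref{eq:compat}--\eqref{eq:compat2} with $K_1=K$ and $K_2=NK$. I treat only $\lt$; the $\rt$ case is word for word the same with $\lR,\rR$ in place of $\lL,\rL$.

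The plan is to rewrite both sides of \eqref{eq:compat} using $NK=KS$ and the Kupershmidt identity for $K$. For the left-hand side,
\begin{equation}
Ku\lt NKv+NKu\lt Kv=Ku\lt KSv+KSu\lt Kv=K\big(u\lt_KSv+Su\lt_Kv\big).
\end{equation}
For the right-hand side, $NK(u\lt_Kv)=KS(u\lt_Kv)$. Hence \eqref{eq:compat} reduces to the single identity
\begin{equation}
K\big(Su\lt_Kv+u\lt_KSv-S(u\lt_Kv)\big)=K(u\lt_{NK}v),
\end{equation}
which says $K\big(u\,(\lt_K)_S\,v\big)=K(u\lt_{NK}v)$.

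This identity follows immediately from \eqref{eq:ON2a}, which gives the equality $u\,(\lt_K)_S\,v=u\lt_{NK}v$ already before applying $K$. Both the Kupershmidt-Nijenhuis and the Kupershmidt-dual-Nijenhuis cases include \eqref{eq:ON} and \eqref{eq:ON2a}--\eqref{eq:ON2} in their definition, so the argument needs no case split. In particular, neither \eqref{eq:NP} nor \eqref{eq:dNP} is used beyond what is needed to invoke Theorem~\ref{prop:ONprop}.

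I do not expect any real obstacle. The only care required is bookkeeping: when expanding the left-hand side via the Kupershmidt identity for $K$, the vectors $Sv$ and $Su$ must end up in the correct slots, namely $Ku\lt K(Sv)=K(u\lt_KSv)$ and $K(Su)\lt Kv=K(Su\lt_Kv)$.
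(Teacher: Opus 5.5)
Your proof is correct and is essentially the paper's argument. The paper expands $(K+KS)\big(u\lt_{K+KS}v\big)$, and its cross terms reduce, exactly as in your computation, to $K\big(u\,(\lt_K)_S\,v\big)=K(u\lt_{NK}v)$, which holds by \eqref{eq:ON2a}. The only difference is packaging: by appealing directly to Proposition~\ref{prop:compateq}, you avoid the paper's final step, which uses Remark~\ref{rem:scaling} to pass from $K+KS$ to all $n_1K+n_2KS$.
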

\begin{proof}
We only prove the conclusion for $\lt$; the case of $\rt$ is similar. It is sufficient to prove that $K+KS$ is a Kupershmidt operator. It is obvious that
\begin{equation}
u\lt_{K+KS}v=u\lt_Kv+u\lt_{KS}v=u\lt_Kv+u\,(\lt_K)_S\,v .
\end{equation}
Since $K$ and $KS=NK$ are Kupershmidt operators by Theorem~\ref{prop:ONprop}(ii), by \eqref{eq:ON2a}--\eqref{eq:ON2}, we have
\begin{align}
&(K+KS)\big(u\lt_{K+KS}v\big)\notag\\\displaybreak[0]
&=K(u\lt_Kv)+KS\big(u\,(\lt_K)_S\,v\big)+KS(u\lt_Kv)+K\big(u\,(\lt_K)_S\,v\big)\notag\\\displaybreak[0]
&=K(u\lt_Kv)+KS\big(u\,(\lt_K)_S\,v\big)+KS(u\lt_Kv)+K\big(Su\lt_Kv+u\lt_KSv-S(u\lt_Kv)\big)\notag\\\displaybreak[0]
&=K(u\lt_Kv)+KS\big(u\,(\lt_K)_S\,v\big)+K(Su\lt_Kv)+K(u\lt_KSv)\notag\\\displaybreak[0]
&=Ku\lt Kv+KSu\lt KSv+KSu\lt Kv+Ku\lt KSv\notag\\\displaybreak[0]
&=(K+KS)u\lt(K+KS)v,
\end{align}
which means that $K+KS$ is a Kupershmidt operator. By Remark~\ref{rem:scaling}, the same argument applied to $(K,cS,cN)$ shows that $n_1K+n_2KS$ is a Kupershmidt operator for all $n_1,n_2\in\kk$. We finish the proof.
\end{proof}

Proposition~\ref{prop:47} is the first step of a hierarchy of pairwise compatible Kupershmidt operators, as in the Lie case \cite[Section~4]{HLS}; our proof avoids powers of Nijenhuis operators.

\begin{lemma}\label{lem:NKstruct}
If $(K,S,N)$ is a Kupershmidt-Nijenhuis structure (resp.\ a Kupershmidt-dual-Nijenhuis structure) on a pre-Leibniz algebra $\aa$ with respect to a representation $V$, then so is $(NK,S,N)$.
\end{lemma}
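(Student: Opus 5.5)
We need to verify the three conditions of Definition~\ref{def:ON} for the triple $(NK,S,N)$. The pair $(N,S)$ is unchanged, so it is still a (dual-)Nijenhuis pair. By Theorem~\ref{prop:ONprop}(ii), $NK$ is a Kupershmidt operator with respect to $V$. The compatibility $N(NK)=(NK)S$ follows at once from $NK=KS$: indeed $N(NK)=N(KS)=(NK)S$. So only \eqref{eq:ON2a}--\eqref{eq:ON2} for $NK$ remain to be checked.

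For these I would use the reformulation \eqref{eq:F}. As noted after Lemma~\ref{prop:42}, via \eqref{eq:key1}, once $NK=KS$ holds, \eqref{eq:ON2a}--\eqref{eq:ON2} for a Kupershmidt operator $K'$ with $NK'=K'S$ are equivalent to
\begin{equation}
S(u\lt_{K'}v)=\lL(K'u)Sv+\rL(K'v)Su,\qquad S(u\rt_{K'}v)=\lR(K'u)Sv+\rR(K'v)Su .
\end{equation}
The computation \eqref{eq:key1} uses only $NK'=K'S$, so this applies to $K'=NK$.

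It therefore suffices to show that, with $K'=NK=KS$,
\[
S\big(\lL(KSu)v+\rL(KSv)u\big)=\lL(KSu)Sv+\rL(KSv)Su,
\]
and the analogous identity for $\rt$. This is exactly identity \eqref{eq:G} from the proof of Theorem~\ref{thm:invON}. There it was derived, without any surjectivity assumption, from two facts. The first is that $S$ is a Nijenhuis operator on $V_K$, which Proposition~\ref{thm:SNij} gives for both the Nijenhuis and the dual-Nijenhuis case. The second is that $u(\lt_K)_Sv=u\lt_{NK}v$, which is the hypothesis \eqref{eq:ON2a}. Concretely:
\[
S(u\lt_{NK}v)=S\big(u(\lt_K)_Sv\big)=Su\lt_KSv=\lL(KSu)Sv+\rL(KSv)Su,
\]
and the left-hand side is $S(u\lt_{NK}v)=S\big(\lL(NKu)v+\rL(NKv)u\big)$. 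The same argument works for $\rt$.

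The only point needing care is the reduction step: we must confirm that \eqref{eq:key1} holds verbatim with $K$ replaced by $NK$. Its derivation expands $Su\lt_{NK}v$ and $u\lt_{NK}Sv$ using $N(NK)=(NK)S$, so it does go through. Beyond that, the argument is a direct reuse of Proposition~\ref{thm:SNij}, and the two cases (Nijenhuis and dual-Nijenhuis) are handled uniformly.
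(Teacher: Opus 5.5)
Your proposal is correct and follows essentially the same route as the paper. You get the Kupershmidt property of $NK$ from Theorem~\ref{prop:ONprop}(ii), check $N(NK)=(NK)S$, and reduce \eqref{eq:ON2a}--\eqref{eq:ON2} to \eqref{eq:F} for $NK$ via \eqref{eq:key1}. You then conclude from $S\big(u\,(\lt_K)_S\,v\big)=Su\lt_KSv$ (Proposition~\ref{thm:SNij}) together with \eqref{eq:ON2a}, and your observation that this is precisely \eqref{eq:G}, derived there before surjectivity is used, is accurate.
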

\begin{proof}
By Theorem~\ref{prop:ONprop}(ii), $NK$ is a Kupershmidt operator, and $N(NK)=NKS=(NK)S$. By \eqref{eq:key1}, it is sufficient to prove \eqref{eq:F} for $NK$. By Proposition~\ref{thm:SNij} and $KS=NK$, we have
\begin{equation}
S(u\lt_{NK}v)=S\big(u\,(\lt_K)_S\,v\big)=Su\lt_KSv=\lL(NKu)Sv+\rL(NKv)Su,
\end{equation}
and similarly for $\rt$. We finish the proof.
\end{proof}

Given a Kupershmidt-(dual-)Nijenhuis structure $(K,S,N)$, we put
\begin{equation}
K_k:=N^kK=KS^k,\qquad k\in\mathbb N,
\end{equation}
where the equality $N^kK=KS^k$ follows from $NK=KS$ by induction, and $K_0=K$. By Lemma~\ref{lem:NKstruct} and induction on $k$, each $(K_k,S,N)$ is a Kupershmidt-Nijenhuis structure (resp.\ a Kupershmidt-dual-Nijenhuis structure). For $i\in\mathbb N$, we denote by $(\lt_{K})_{S^i}$ and $(\rt_K)_{S^i}$ the products on $V$ deformed by $S^i$, defined as in Section~\ref{sec:ON} with $S$ replaced by $S^i$.

\begin{lemma}\label{lem:Si}
Let $(K,S,N)$ be a Kupershmidt-(dual-)Nijenhuis structure on a pre-Leibniz algebra $\aa$ with respect to a representation $V$. Then, for all $i\in\mathbb N$ and $u,v\in V$,
\begin{equation}\label{eq:Si1}
\begin{aligned}
S^i(u\lt_Kv)&=\lL(Ku)S^iv+\rL(Kv)S^iu,\\
S^i(u\rt_Kv)&=\lR(Ku)S^iv+\rR(Kv)S^iu.
\end{aligned}
\end{equation}
\end{lemma}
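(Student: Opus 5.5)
The proof is by induction on $i$. The cases $i=0$ and $i=1$ are already available: $i=0$ is the definition of $\lt_K,\rt_K$, and $i=1$ is \eqref{eq:F}. Recall that \eqref{eq:F} follows from \eqref{eq:key1} together with \eqref{eq:ON2a}--\eqref{eq:ON2} for any Kupershmidt-(dual-)Nijenhuis structure. The extra input is that \eqref{eq:F} also holds with $K$ replaced by $K_k=KS^k=N^kK$ for every $k$. This is because each $(K_k,S,N)$ is again a Kupershmidt-(dual-)Nijenhuis structure by Lemma~\ref{lem:NKstruct} and induction. Concretely, for all $k$ and $u,v\in V$,
\begin{equation}
S\big(\lL(KS^ku)v+\rL(KS^kv)u\big)=\lL(KS^ku)Sv+\rL(KS^kv)Su,
\end{equation}
and similarly for $\lR,\rR$. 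I will only treat the $\lt$-part; the $\rt$-part is identical with $\lR,\rR$ in place of $\lL,\rL$.

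For the inductive step, assume \eqref{eq:Si1} holds for $i$ and apply $S$ to it. We must show
\begin{equation}
S\big(\lL(Ku)S^iv\big)+S\big(\rL(Kv)S^iu\big)=\lL(Ku)S^{i+1}v+\rL(Kv)S^{i+1}u .
\end{equation}
The two summands on the left are not separately controlled, so I would expand each one using \eqref{eq:F} for $K$ at a suitably shifted pair.

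At $(u,S^iv)$, \eqref{eq:F} gives
\begin{equation}
S\big(\lL(Ku)S^iv\big)=\lL(Ku)S^{i+1}v+\rL(KS^iv)Su-S\big(\rL(KS^iv)u\big).
\end{equation}
At $(S^iu,v)$, it gives
\begin{equation}
S\big(\rL(Kv)S^iu\big)=\lL(KS^iu)Sv+\rL(Kv)S^{i+1}u-S\big(\lL(KS^iu)v\big).
\end{equation}
Adding these two identities produces the desired right-hand side plus the error term
\begin{equation}
\lL(KS^iu)Sv+\rL(KS^iv)Su-S\big(\lL(KS^iu)v+\rL(KS^iv)u\big).
\end{equation}
This error term vanishes by \eqref{eq:F} for $K_i=KS^i$, which closes the induction.

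The main obstacle is to see that the induction cannot be run one action at a time. The proof must instead pair the $\lL$-term of one instance of \eqref{eq:F} with the $\rL$-term of another, and then cancel the leftover cross terms using the structure $(K_i,S,N)$ supplied by Lemma~\ref{lem:NKstruct}. Once that pairing is chosen, the remaining verification is a routine bookkeeping check using $KS^i=N^iK$.
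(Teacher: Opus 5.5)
Your proposal is correct and matches the paper's proof: the paper also applies \eqref{eq:F} at $(u,S^iv)$ and at $(S^iu,v)$, and gathers the cross terms into $\lL(K_iu)Sv+\rL(K_iv)Su-S(u\lt_{K_i}v)$. This residual vanishes by \eqref{eq:F} for $K_i=KS^i$, which holds because Lemma~\ref{lem:NKstruct} (with induction) makes $(K_i,S,N)$ a Kupershmidt-(dual-)Nijenhuis structure.
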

\begin{proof}
We only prove \eqref{eq:Si1} by induction on $i$. The proof of the second identity is similar. For $i=0$ it is obvious, and for $i=1$, \eqref{eq:Si1} is \eqref{eq:F}. Assume that \eqref{eq:Si1} holds for $i\geq1$. Since $KS^i=K_i$, by \eqref{eq:F} at $(u,S^iv)$ and at $(S^iu,v)$, we have
\begin{equation}
\begin{aligned}
S^{i+1}(u\lt_Kv)&=S\lL(Ku)S^iv+S\rL(Kv)S^iu\\
&=S(u\lt_KS^iv)-S\rL(K_iv)u+S(S^iu\lt_Kv)-S\lL(K_iu)v\\
&=\lL(Ku)S^{i+1}v+\rL(Kv)S^{i+1}u+\lL(K_iu)Sv+\rL(K_iv)Su-S(u\lt_{K_i}v).
\end{aligned}
\end{equation}
By Lemma~\ref{lem:NKstruct}, $(K_i,S,N)$ is a Kupershmidt-(dual-)Nijenhuis structure, so \eqref{eq:F} holds for $K_i$. Thus the last three terms vanish, and \eqref{eq:Si1} holds for $i+1$.
\end{proof}

\begin{proposition}\label{thm:hierarchy}
Let $(K,S,N)$ be a Kupershmidt-Nijenhuis structure (resp.\ a Kupershmidt-dual-Nijenhuis structure) on a pre-Leibniz algebra $\aa$ with respect to a representation $V$, and put $K_k=N^kK=KS^k$. Then:
\begin{enumerate}[(i)]
\item for every $k\in\mathbb N$, $K_k$ is a Kupershmidt operator on $\aa$ with respect to $V$, and $(K_k,S,N)$ is a Kupershmidt-Nijenhuis structure (resp.\ a Kupershmidt-dual-Nijenhuis structure);
\item for all $k,i\in\mathbb N$, the pre-Leibniz algebra $V_{K_{k+i}}$ is the deformation of $V_{K_k}$ by $S^i$, i.e.
\begin{equation}
u\lt_{K_{k+i}}v=u\,(\lt_{K_k})_{S^i}\,v,\qquad u\rt_{K_{k+i}}v=u\,(\rt_{K_k})_{S^i}\,v,
\end{equation}
and $S^i:V_{K_{k+i}}\to V_{K_k}$ is a morphism of pre-Leibniz algebras;
\item for all $k,l\in\mathbb N$, the Kupershmidt operators $K_k$ and $K_l$ are compatible.
\end{enumerate}
\end{proposition}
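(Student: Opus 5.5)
Part (i) follows by induction on $k$ from Lemma~\ref{lem:NKstruct} and Theorem~\ref{prop:ONprop}(ii). The base case $(K_0,S,N)=(K,S,N)$ holds by assumption. If $(K_k,S,N)$ is a Kupershmidt-(dual-)Nijenhuis structure, then Lemma~\ref{lem:NKstruct} says that $(NK_k,S,N)=(K_{k+1},S,N)$ is one as well. Theorem~\ref{prop:ONprop}(ii) then shows that each $K_{k+1}=NK_k$ is a Kupershmidt operator. This was essentially observed before Lemma~\ref{lem:Si}.

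For (ii), fix $k$ and apply Lemma~\ref{lem:Si} to the structure $(K_k,S,N)$ from (i). This gives $S^i(u\lt_{K_k}v)=\lL(K_ku)S^iv+\rL(K_kv)S^iu$, together with the analogous identity for $\rt$. Expanding the deformed product $u\,(\lt_{K_k})_{S^i}\,v=S^iu\lt_{K_k}v+u\lt_{K_k}S^iv-S^i(u\lt_{K_k}v)$ and using $K_kS^i=K_{k+i}$, the first two terms become
\[
\lL(K_{k+i}u)v+\rL(K_kv)S^iu+\lL(K_ku)S^iv+\rL(K_{k+i}v)u .
\]
Subtracting the identity from Lemma~\ref{lem:Si} leaves $\lL(K_{k+i}u)v+\rL(K_{k+i}v)u=u\lt_{K_{k+i}}v$, which is the claimed equality. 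This is the same computation as \eqref{eq:key1}, with $S$ replaced by $S^i$.

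For the morphism claim, I will show $S^i(u\lt_{K_{k+i}}v)=S^iu\lt_{K_k}S^iv$ by induction on $i$. The case $i=1$ is Proposition~\ref{thm:SNij} applied to $(K_k,S,N)$, combined with $(V_{K_k})_S=V_{K_{k+1}}$. For the inductive step, write $S^{i+1}(u\lt_{K_{k+i+1}}v)=S^i\big(S(u\lt_{K_{k+i+1}}v)\big)$. The case $i=1$ for the structure $(K_{k+i},S,N)$ turns this into $S^i(Su\lt_{K_{k+i}}Sv)$. The induction hypothesis then gives $S^{i+1}u\lt_{K_k}S^{i+1}v$. The same argument works for $\rt$.

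For (iii), assume $k\le l$ and set $i=l-k$. Replacing $K$ by $K_k$ reduces the claim to showing that $K$ and $KS^i$ are compatible whenever $(K,S,N)$ is a structure. I will verify \eqref{eq:compat} for $K_1=KS^i$ and $K_2=K$ via Proposition~\ref{prop:compateq}, imitating the proof of Proposition~\ref{prop:47}. By (ii) with $k=0$, we have $u\lt_{KS^i}v=u\,(\lt_K)_{S^i}\,v$. Hence
\[
K\big(u\lt_{KS^i}v\big)=K(S^iu\lt_Kv)+K(u\lt_KS^iv)-KS^i(u\lt_Kv).
\]
Since $K$ is a Kupershmidt operator, the first two terms equal $KS^iu\lt Kv$ and $Ku\lt KS^iv$. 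The last term cancels against the term $KS^i(u\lt_Kv)$ in the right-hand side of \eqref{eq:compat}, and this yields \eqref{eq:compat}. The case of $\rt$ is identical.

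The main obstacle is making sure that the deformation formula in (ii) really uses only Lemma~\ref{lem:Si}, and not powers of $N$ being Nijenhuis. The identity for $S^i$ with fixed $K_k$ is exactly what makes the cancellation work, so the only care needed is in applying Lemma~\ref{lem:Si} to the right structure $(K_k,S,N)$, which part (i) supplies.
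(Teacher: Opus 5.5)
Your proposal is correct and follows the paper's proof essentially step by step: (i) by induction from Lemma~\ref{lem:NKstruct}, the deformation formula in (ii) from Lemma~\ref{lem:Si} applied to $(K_k,S,N)$, and (iii) via Proposition~\ref{prop:compateq} using (ii). The only deviation is in the morphism claim of (ii). You compose the one-step morphisms $S:V_{K_{j+1}}\to V_{K_j}$ from Proposition~\ref{thm:SNij} by induction on $i$, whereas the paper gets it in one line by applying Lemma~\ref{lem:Si} to the structure $(K_{k+i},S,N)$, since $S^iu\lt_{K_k}S^iv=\lL(K_{k+i}u)S^iv+\rL(K_{k+i}v)S^iu$; both arguments are valid.
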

\begin{proof}
We only prove the conclusion for $\lt$. The case of $\rt$ is similar.

(i) It follows from Lemma~\ref{lem:NKstruct} by induction on $k$.

(ii) Since $K_kS^i=K_{k+i}$, by Lemma~\ref{lem:Si} for the structure $(K_k,S,N)$, we have
\begin{equation}
\begin{aligned}
u\,(\lt_{K_k})_{S^i}\,v&=S^iu\lt_{K_k}v+u\lt_{K_k}S^iv-S^i(u\lt_{K_k}v)\\
&=\lL(K_{k+i}u)v+\rL(K_kv)S^iu+\lL(K_ku)S^iv+\rL(K_{k+i}v)u-S^i(u\lt_{K_k}v)\\
&=\lL(K_{k+i}u)v+\rL(K_{k+i}v)u=u\lt_{K_{k+i}}v .
\end{aligned}
\end{equation}
Moreover, by Lemma~\ref{lem:Si} for the structure $(K_{k+i},S,N)$, we have
\begin{equation}
S^iu\lt_{K_k}S^iv=\lL(K_{k+i}u)S^iv+\rL(K_{k+i}v)S^iu=S^i(u\lt_{K_{k+i}}v),
\end{equation}
which implies that $S^i$ is a morphism from $V_{K_{k+i}}$ to $V_{K_k}$.

(iii) We may assume that $l=k+i$. Since $K_k$ is a Kupershmidt operator and $K_{k+i}=K_kS^i$, by (ii), we have
\begin{equation}
\begin{aligned}
K_ku\lt K_{k+i}v+K_{k+i}u\lt K_kv&=K_k\big(u\lt_{K_k}S^iv+S^iu\lt_{K_k}v\big)\\
&=K_k\big(u\,(\lt_{K_k})_{S^i}\,v\big)+K_kS^i(u\lt_{K_k}v)\\
&=K_k(u\lt_{K_{k+i}}v)+K_{k+i}(u\lt_{K_k}v).
\end{aligned}
\end{equation}
Thus $K_k$ and $K_{k+i}$ are compatible by Proposition~\ref{prop:compateq}. We finish the proof.
\end{proof}

We conclude with a converse: if one of two compatible Kupershmidt operators is invertible, they can be recovered from a Kupershmidt-dual-Nijenhuis structure.

\begin{proposition}\label{prop:48}
Let $K_1,K_2$ be two compatible Kupershmidt operators on a pre-Leibniz algebra $\aa$ with respect to a representation $V$, and suppose that $K_1$ is invertible. Put $S=K_1^{-1}K_2$ and $N=K_2K_1^{-1}$. Then
\begin{enumerate}[(i)]
\item $(K_1,S,N)$ is a Kupershmidt-dual-Nijenhuis structure;
\item $(K_2,S,N)$ is a Kupershmidt-dual-Nijenhuis structure.
\end{enumerate}
\end{proposition}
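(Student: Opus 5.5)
Set $S=K_1^{-1}K_2$ and $N=K_2K_1^{-1}$. Then $NK_1=K_2=K_1S$ and $NK_2=K_2K_1^{-1}K_2=K_1S^2=K_2S$. So \eqref{eq:ON} holds for both triples. By Proposition~\ref{prop:compatNij} (with the roles of the indices swapped), $N=K_2K_1^{-1}$ is a Nijenhuis operator on $\aa$. For each triple it then remains to prove two things: the dual-Nijenhuis condition \eqref{eq:dNP} for $(N,S)$, and, using \eqref{eq:key1}, the identity \eqref{eq:F} for the relevant Kupershmidt operator.

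\textbf{Step 1: \eqref{eq:F} for $K_1$.} Compatibility (Proposition~\ref{prop:compateq}) with $K_2=K_1S$ gives
$$K_1u\lt K_1Sv+K_1Su\lt K_1v=K_1(u\lt_{K_1S}v)+K_1S(u\lt_{K_1}v).$$
Since $K_1$ is a Kupershmidt operator, the left side equals $K_1(u\lt_{K_1}Sv+Su\lt_{K_1}v)$. Applying $K_1^{-1}$, we get $u\,(\lt_{K_1})_S\,v=u\lt_{K_1S}v=u\lt_{NK_1}v$, which is \eqref{eq:ON2a}; the $\rt$ case is the same. Equivalently, \eqref{eq:F} holds for $K_1$.

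\textbf{Step 2: \eqref{eq:dNP}.} Since $K_1$ is surjective, every $x\in\aa$ can be written $x=K_1w$. For $\zeta=\lL$, apply \eqref{eq:F} at $(u,v)$, $(Su,v)$ and $(u,Sv)$, mimicking the proof of Theorem~\ref{thm:invON}. I also need a further identity: $S$ is a Nijenhuis operator on $V_{K_1}$. This holds because $K_2=K_1S$ is a Kupershmidt operator and $K_1$ is injective. Indeed, $K_1(Su\lt_{K_1}Sv)=K_2u\lt K_2v=K_2(u\lt_{K_2}v)=K_1S(u\,(\lt_{K_1})_S\,v)$, and cancelling $K_1$ gives the Nijenhuis identity. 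In particular \eqref{eq:G} holds with $NK_1=K_2$. Combining \eqref{eq:G} with \eqref{eq:F} at $(Su,v)$ isolates the $\rL$ terms; combining it with \eqref{eq:F} at $(u,Sv)$ isolates the $\lL$ terms. Writing $x=K_1v$ (respectively $x=K_1u$) then yields \eqref{eq:dNP} for $\zeta=\rL,\lL$, and the $\rt$-parts give it for $\zeta=\lR,\rR$.

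The main obstacle is the separation of the $\lL$ and $\rL$ contributions, because they appear summed. Since $u$ and $v$ are independent, I would set $v$ so that the relevant term is isolated. If that fails, I would fall back on the fact that the proof of Theorem~\ref{thm:invON} already separates them using only \eqref{eq:F} and \eqref{eq:G}. That proof used only surjectivity, not that $(N,S)$ was a Nijenhuis pair, so it should transfer verbatim. This gives (i).

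\textbf{Step 3: part (ii).} Lemma~\ref{lem:NKstruct} applied to the Kupershmidt-dual-Nijenhuis structure $(K_1,S,N)$ from (i) shows that $(NK_1,S,N)=(K_2,S,N)$ is also a Kupershmidt-dual-Nijenhuis structure.
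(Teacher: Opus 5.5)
Your proposal is correct and follows the paper's proof of (i) step by step. Compatibility plus invertibility of $K_1$ gives \eqref{eq:F}, and injectivity of $K_1$ together with the fact that $K_2=K_1S$ is Kupershmidt gives \eqref{eq:G}. The separation argument of Theorem~\ref{thm:invON} then yields \eqref{eq:dNP}: substituting \eqref{eq:F} at $(Su,v)$, resp.\ $(u,Sv)$, into \eqref{eq:G} leaves an identity involving only $\rL$, resp.\ only $\lL$, so your worry about separating the two is unfounded.

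For (ii) you cite Lemma~\ref{lem:NKstruct} instead of the paper's two-line direct computation. This is legitimate, since the lemma precedes the proposition and holds in the dual case, and it rests on the same identity $S(u\lt_{K_2}v)=Su\lt_{K_1}Sv$.
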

\begin{proof}
(i) It is obvious that $NK_1=K_2=K_1S$. Thus we only need to prove that $(N,S)$ is a dual-Nijenhuis pair and that \eqref{eq:ON2a}--\eqref{eq:ON2} holds for $K=K_1$. By Proposition~\ref{prop:compatNij}, with the roles of $K_1$ and $K_2$ interchanged, $N=K_2K_1^{-1}$ is a Nijenhuis operator. By Proposition~\ref{prop:compateq}, \eqref{eq:compat} holds. Substituting $K_2$ with $K_1S$ in \eqref{eq:compat}, we have
\begin{equation}\label{eq:pf48}
K_1u\lt K_1Sv+K_1Su\lt K_1v=K_1(u\lt_{K_1S}v)+K_1S(u\lt_{K_1}v).
\end{equation}
Since $K_1$ is a Kupershmidt operator, the left-hand side of \eqref{eq:pf48} is $K_1\big(\lL(K_1u)Sv+\rL(K_1Sv)u+\lL(K_1Su)v+\rL(K_1v)Su\big)$. Since $K_1$ is invertible, \eqref{eq:pf48} is equivalent to
\begin{equation}\label{eq:pf48b}
S\big(\lL(K_1u)v+\rL(K_1v)u\big)=\lL(K_1u)Sv+\rL(K_1v)Su .
\end{equation}
On the other hand, by \eqref{eq:key1}, we have
\begin{equation}
u\,(\lt_{K_1})_S\,v-u\lt_{K_1S}v=\lL(K_1u)Sv+\rL(K_1v)Su-S\big(\lL(K_1u)v+\rL(K_1v)u\big).
\end{equation}
Thus \eqref{eq:pf48b} implies that $u\,(\lt_{K_1})_S\,v=u\lt_{NK_1}v$, and similarly for $\rt$, i.e.\ \eqref{eq:ON2a}--\eqref{eq:ON2} holds. Moreover, since $K_1$ and $K_2=K_1S$ are Kupershmidt operators, $K_1(Su\lt_{K_1}Sv)=K_2u\lt K_2v=K_2(u\lt_{K_2}v)=K_1S(u\lt_{NK_1}v)$, which implies that \eqref{eq:G} holds for $K=K_1$. Since \eqref{eq:F} holds by \eqref{eq:pf48b}, the proof of $(N,S)$ being a dual-Nijenhuis pair is the same as the proof of Theorem~\ref{thm:invON}. Therefore, $(K_1,S,N)$ is a Kupershmidt-dual-Nijenhuis structure.

(ii) Since $NK_2=K_2K_1^{-1}K_2=K_2S$, by direct calculation, we have
\begin{equation}
\begin{aligned}
u\,(\lt_{K_2})_S\,v-u\lt_{K_2S}v
&=\lL(K_2u)Sv+\rL(K_2v)Su-S\big(\lL(K_2u)v+\rL(K_2v)u\big)\\
&=\lL(K_1Su)Sv+\rL(K_1Sv)Su-S\big(\lL(K_1Su)v+\rL(K_1Sv)u\big)\\
&=Su\lt_{K_1}Sv-S(u\lt_{NK_1}v)=0,
\end{aligned}
\end{equation}
and similarly for $\rt$. Thus $(K_2,S,N)$ is also a Kupershmidt-dual-Nijenhuis structure.
\end{proof}

\section{Rota--Baxter--Nijenhuis structures, \texorpdfstring{$r$-$n$}{r-n} structures and BN-structures}\label{sec:rn}

In this section, we show that RBN-structures, $r$-$n$ structures and BN-structures are Kupershmidt-(dual-)Nijenhuis structures, and that RBN-structures and $r$-$n$ structures correspond to each other on quadratic pre-Leibniz algebras.

For a linear map $N:\aa\to\aa$ we denote by $N^{t}:\aa^*\to\aa^*$ its transpose. For a linear map $P:\aa^*\to\aa$, the products $\alpha\lt_P\beta$ and $\alpha\rt_P\beta$ on $\aa^*$ are defined as in \eqref{eq:O}, using the dual regular representation $\aa^\vee$ of \eqref{eq:dualreg}.

\begin{definition}\label{def:rn}
Let $\pi$ be a pre-Leibniz $r$-matrix and $N:\aa\to\aa$ a Nijenhuis operator on a pre-Leibniz algebra $(\aa,\lt,\rt)$. We say that $(\pi,N)$ is a \emph{pre-Leibniz $r$-$n$ structure} on $\aa$ if, for all $\alpha,\beta\in\aa^*$,
\begin{align}
N\pi^\sharp&=\pi^\sharp N^{t},\label{eq:rn1}\\
\alpha\lt_{N\pi^\sharp}\beta&=\alpha\,(\lt_{\pi^\sharp})_{N^{t}}\,\beta,\label{eq:rn2a}\\
\alpha\rt_{N\pi^\sharp}\beta&=\alpha\,(\rt_{\pi^\sharp})_{N^{t}}\,\beta.\label{eq:rn2}
\end{align}
\end{definition}

We immediately have the following.

\begin{thm}\label{thm:rnON}
Suppose that $(\pi,N)$ is a pre-Leibniz $r$-$n$ structure on a pre-Leibniz algebra $\aa$. Then $(\pi^\sharp,N^{t},N)$ is a Kupershmidt-dual-Nijenhuis structure on $\aa$ with respect to the dual regular representation $\aa^\vee$. Consequently, $\pi^\sharp$ and $N\pi^\sharp$ are compatible Kupershmidt operators.
\end{thm}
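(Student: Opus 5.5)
The plan is to check the ingredients of Definition~\ref{def:ON} one at a time, taking $V=\aa^\vee$, $K=\pi^\sharp$ and $S=N^{t}$. First, $\pi^\sharp$ is a Kupershmidt operator on $\aa$ with respect to $\aa^\vee$; this is exactly the definition of a pre-Leibniz $r$-matrix. Second, the identities \eqref{eq:ON}--\eqref{eq:ON2} for $(\pi^\sharp,N^{t},N)$ coincide with \eqref{eq:rn1}--\eqref{eq:rn2}. Here $\lt_{N\pi^\sharp}$ and $(\lt_{\pi^\sharp})_{N^{t}}$ are formed from the actions of $\aa^\vee$, as agreed at the start of this section. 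So only one thing remains: that $(N,N^{t})$ is a dual-Nijenhuis pair on $\aa$ with respect to $\aa^\vee$.

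For that step I will use Proposition~\ref{thm:dualpair} with $V=\aa$ the regular representation and $S=N$. It states that $(N,N)$ is a Nijenhuis pair with respect to $\aa$ exactly when $(N,N^{t})$ is a dual-Nijenhuis pair with respect to $\aa^\vee$. By the remark after Definition~\ref{def:NP}, $(N,N)$ is a Nijenhuis pair for the regular representation because $N$ is a Nijenhuis operator. To confirm this, I write \eqref{eq:NP} for $\zeta=L_\lt$ applied to $y$:
\[
Nx\lt Ny=N(Nx\lt y)+N(x\lt Ny)-N^2(x\lt y),
\]
which is \eqref{eq:Nij}. The case $\zeta=R_\lt$ is the same identity with the arguments swapped, and $\rt$ is handled in the same way. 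I should also check that the dual regular representation \eqref{eq:dualreg} is the one obtained from Proposition~\ref{prop:dual} with $\lL=L_\lt$, $\lR=L_\rt$, $\rL=R_\lt$, $\rR=R_\rt$. Indeed $\lL^*+\lR^*=L_\circ^*$ and $\ell^*+r^*=L_\circ^*+R_\circ^*$, so the two agree.

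This completes the proof that $(\pi^\sharp,N^{t},N)$ is a Kupershmidt-dual-Nijenhuis structure. The final claim, that $\pi^\sharp$ and $N\pi^\sharp=\pi^\sharp N^{t}$ are compatible Kupershmidt operators, then follows directly from Proposition~\ref{prop:47} together with Theorem~\ref{prop:ONprop}(ii).

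No step here is genuinely hard. The one place that needs care is the bookkeeping of transposes and signs: I must make sure that the $S^{t}$ appearing in Proposition~\ref{thm:dualpair} is the same map as the $N^{t}$ in \eqref{eq:rn1}. Both are the plain transpose, with no sign attached, so they match.
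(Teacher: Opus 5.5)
Your proposal is correct and follows the paper's own proof: $\pi^\sharp$ is a Kupershmidt operator by the definition of a pre-Leibniz $r$-matrix; \eqref{eq:rn1}--\eqref{eq:rn2} are literally \eqref{eq:ON}--\eqref{eq:ON2} for $K=\pi^\sharp$ and $S=N^{t}$; the dual-Nijenhuis pair condition comes from Proposition~\ref{thm:dualpair} applied to the Nijenhuis pair $(N,N)$ of the regular representation; and compatibility follows from Proposition~\ref{prop:47}. Your extra checks are also correct and only make explicit what the paper takes for granted, namely the $\zeta=L_\lt,R_\lt$ computation and the verification that \eqref{eq:dualreg} is the instance of Proposition~\ref{prop:dual}.
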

\begin{proof}
Since $N$ is a Nijenhuis operator, $(N,N)$ is a Nijenhuis pair with respect to the regular representation, and thus $(N,N^{t})$ is a dual-Nijenhuis pair with respect to $\aa^\vee$ by Proposition~\ref{thm:dualpair}. Moreover, \eqref{eq:rn1}--\eqref{eq:rn2} are exactly \eqref{eq:ON}--\eqref{eq:ON2} for $K=\pi^\sharp$ and $S=N^{t}$. The last assertion follows from Proposition~\ref{prop:47}.
\end{proof}

\begin{exam}\label{ex:rnnotKN}
Let $\aa$ and $N$ be as in Examples~\ref{ex:3dim} and~\ref{ex:NP3}, and let $\pi=e_1\otimes e_1$, i.e.\ $\pi^\sharp(e^1)=e_1$ and $\pi^\sharp(e^2)=\pi^\sharp(e^3)=0$. By \eqref{eq:dualreg}, for $\alpha,\beta\in\aa^*$ we have
\begin{equation}
\alpha\lt_{\pi^\sharp}\beta=\alpha_1\beta_2e^3+\beta_1\big(\alpha_2e^2+(\alpha_2+\alpha_3)e^3\big),\qquad \alpha\rt_{\pi^\sharp}\beta=-\alpha_1\big(\beta_2e^2+(\beta_2+\beta_3)e^3\big)-\beta_1\alpha_2e^3 .
\end{equation}
These have no $e^1$-component and $e_1\lt e_1=e_1\rt e_1=0$, so $\pi$ is a pre-Leibniz $r$-matrix. Moreover, $N\pi^\sharp=\pi^\sharp N^{t}=\lambda\pi^\sharp$, and a direct computation shows that \eqref{eq:F} hold for $K=\pi^\sharp$ and $S=N^{t}$. Hence $(\pi,N)$ is a pre-Leibniz $r$-$n$ structure, and $(\pi^\sharp,N^{t},N)$ is a Kupershmidt-dual-Nijenhuis structure with respect to $\aa^\vee$. If $\lambda\neq\nu$, then $(N,N^{t})$ is not a Nijenhuis pair by Example~\ref{ex:NPnotdual}, so this structure is not Kupershmidt-Nijenhuis; its hierarchy $N^k\pi^\sharp=\lambda^k\pi^\sharp$ is trivial.
\end{exam}

Similarly, Rota--Baxter operators combined with Nijenhuis operators lead to the following notion.

\begin{definition}\label{def:RBN}
Let $R:\aa\to\aa$ be a Rota--Baxter operator and $N:\aa\to\aa$ a Nijenhuis operator on a pre-Leibniz algebra $(\aa,\lt,\rt)$. A pair $(R,N)$ is called an \emph{RBN-structure} on $\aa$ if
\begin{align}
NR&=RN,\label{eq:RBN1}\\
x\lt_{NR}y&=x\,(\lt_R)_N\,y,\label{eq:RBN2a}\\
x\rt_{NR}y&=x\,(\rt_R)_N\,y,\qquad\forall\,x,y\in\aa,\label{eq:RBN2}
\end{align}
where $x\lt_{NR}y=NRx\lt y+x\lt NRy$ and $x\rt_{NR}y=NRx\rt y+x\rt NRy$.
\end{definition}

Since $(N,N)$ is a Nijenhuis pair with the regular representation, $(R,N)$ is an RBN-structure if and only if $(R,N,N)$ is a Kupershmidt-Nijenhuis structure on $\aa$ with respect to the regular representation. In particular, by Proposition~\ref{prop:47}, $NR$ is a Rota--Baxter operator compatible with $R$.

\begin{exam}\label{ex:RBN}
Fix $(\sigma,\tau)\in\kk^2\setminus\{(0,0)\}$ and let $\aa$ be the pre-Leibniz algebra with basis $\{e_1,e_2\}$ and nonzero products $e_1\lt e_1=\sigma e_2$, $e_1\rt e_1=\tau e_2$, i.e.\ $x\lt y=\sigma x_1y_1e_2$ and $x\rt y=\tau x_1y_1e_2$ (every term of \eqref{eq:PL1}--\eqref{eq:PL3} vanishes). When $\sigma+\tau=1$, $\aa$ splits the two-dimensional Leibniz algebra $[e_1,e_1]=e_2$. For a linear map $A$ on $\aa$ we write $A(e_j)=\sum_i a_{ij}e_i$. Direct computation on basis vectors gives the following.

(a) The Rota--Baxter operators are exactly
\begin{equation}
R_{\rm I}(e_1)=pe_2,\quad R_{\rm I}(e_2)=ce_2;\qquad R_{\rm II}(e_1)=2ce_1+pe_2,\quad R_{\rm II}(e_2)=ce_2\qquad(p,c\in\kk);
\end{equation}
indeed, writing $R=(r_{ij})$, the conditions reduce to $r_{12}=0$ and $r_{11}(r_{11}-2r_{22})=0$.

(b) The Nijenhuis operators are exactly $N(e_1)=\lambda e_1+\mu e_2$, $N(e_2)=\lambda e_2$ ($\lambda,\mu\in\kk$); indeed, writing $N=(n_{ij})$, \eqref{eq:Nij} reduces to $n_{12}=0$ and $(n_{11}-n_{22})^2=0$. Then $\aa_N$ has parameters $(\lambda\sigma,\lambda\tau)$.

(c) We have $NR(e_1)-RN(e_1)=\mp\mu c\,e_2$ for $R=R_{\rm I},R_{\rm II}$ and $NR(e_2)=RN(e_2)$, so \eqref{eq:RBN1} holds if and only if $\mu c=0$. By \eqref{eq:key1}, under \eqref{eq:RBN1}, \eqref{eq:RBN2a}--\eqref{eq:RBN2} are equivalent to $N(Rx\lt y+x\lt Ry)=Rx\lt Ny+Nx\lt Ry$ (and similarly for $\rt$), and both sides equal $2\sigma\lambda r_{11}x_1y_1e_2$. Hence $(R,N)$ is an RBN-structure if and only if $\mu c=0$.

(d) The operators $R_{\rm II}$ form the linear space spanned by $R_1=\mathrm{diag}(2,1)$ and $R_2$ with $R_2(e_1)=e_2$, $R_2(e_2)=0$; hence any two of them are compatible. Let $K_1=R_1$ and $K_2=\gamma R_1+\delta R_2$. Then $K_1$ is invertible, and the maps of Proposition~\ref{prop:48} are
\begin{equation}
S=K_1^{-1}K_2:\ e_1\mapsto\gamma e_1+\delta e_2,\ e_2\mapsto\gamma e_2;\qquad N=K_2K_1^{-1}:\ e_1\mapsto\gamma e_1+\tfrac{\delta}{2}e_2,\ e_2\mapsto\gamma e_2 .
\end{equation}
So $(R_1,S,N)$ and $(K_2,S,N)$ are Kupershmidt-dual-Nijenhuis structures, with $S\neq N$ when $\delta\neq0$. Since all actions of the regular representation are scalar multiples of $J\colon e_1\mapsto e_2,\ e_2\mapsto0$, which commutes with $S=\gamma\Id_\aa+\delta J$, the pair $(N,S)$ is perfect, so these are also Kupershmidt-Nijenhuis structures. For $\delta\neq0$, $(R_1,N)$ is not an RBN-structure by (c), since $\mu c=\delta/2\neq0$. Moreover, since $J^2=0$ and $R_1J=R_2$, the hierarchy of Proposition~\ref{thm:hierarchy} generated by $(R_1,S,N)$ is $K_k=R_1S^k=\gamma^kR_1+k\gamma^{k-1}\delta R_2$, $k\in\mathbb N$.
\end{exam}

To compare RBN-structures with $r$-$n$ structures, we need an invariant bilinear form.

\begin{definition}\label{def:quad}
A \emph{quadratic pre-Leibniz algebra} is a pre-Leibniz algebra $(\aa,\lt,\rt)$ together with a nondegenerate bilinear form $q$ such that the map $\varphi:\aa\to\aa^*$, $\pair{\varphi(x)}{y}=q(x,y)$, is an isomorphism from the regular representation to the dual regular representation $\aa^\vee$. Explicitly, for all $x,y,z\in\aa$,
\begin{equation}
\begin{aligned}
q(x\lt y,z)&=q(y,x\lt z), & q(x\rt y,z)&=-q(y,x\circ z),\\
q(y\lt x,z)&=q(y,x\circ z+z\circ x), & q(y\rt x,z)&=-q(y,x\lt z+z\rt x).
\end{aligned}
\end{equation}
We write $q^\sharp:=\varphi^{-1}:\aa^*\to\aa$.
\end{definition}

Equivalently, for all $x\in\aa$,
\begin{align}
\varphi L_\lt(x)&=-L_\lt^*(x)\varphi, & \varphi L_\rt(x)&=L_\circ^*(x)\varphi,\label{eq:quadiso}\\
\varphi R_\lt(x)&=-(L_\circ^*+R_\circ^*)(x)\varphi, & \varphi R_\rt(x)&=(L_\lt^*+R_\rt^*)(x)\varphi .\label{eq:quadiso4}
\end{align}

On a quadratic pre-Leibniz algebra, RBN-structures and pre-Leibniz $r$-$n$ structures correspond to each other. This is the pre-Leibniz analogue of the corresponding result for Leibniz algebras in \cite{SJ}.

\begin{thm}\label{thm:RBNrn}
Let $(\aa,q)$ be a quadratic pre-Leibniz algebra, $R:\aa\to\aa$ a linear map and $N:\aa\to\aa$ a Nijenhuis operator. Let $\pi\in\aa\otimes\aa$ be given by $\pi^\sharp=Rq^\sharp$, and suppose that
\begin{equation}\label{eq:qN}
q^\sharp N^{t}=Nq^\sharp,\quad\text{i.e.}\quad q(Nx,y)=q(x,Ny)\ \ \forall\,x,y\in\aa .
\end{equation}
Then $(R,N)$ is an RBN-structure on $\aa$ if and only if $(\pi,N)$ is a pre-Leibniz $r$-$n$ structure.
\end{thm}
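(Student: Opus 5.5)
Since $\varphi=(q^\sharp)^{-1}$ is an isomorphism from the regular representation onto $\aa^\vee$, the plan is to transport every condition defining an RBN-structure along $\varphi$ and match it with the corresponding condition defining an $r$-$n$ structure. The key general observation is this. Let $f:V\to W$ be an isomorphism of representations of $\aa$, meaning $f\zeta_V(x)=\zeta_W(x)f$ for each action $\zeta$. Then for any linear $P:V\to\aa$, the map $Pf^{-1}:W\to\aa$ satisfies
\[
f(u\lt_Pv)=fu\lt_{Pf^{-1}}fv,\qquad f(u\rt_Pv)=fu\rt_{Pf^{-1}}fv .
\]
Indeed, $f\big(\lL(Pu)v+\rL(Pv)u\big)=\lL'(Pf^{-1}fu)fv+\rL'(Pf^{-1}fv)fu$. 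Consequently $P$ is Kupershmidt with respect to $V$ iff $Pf^{-1}$ is Kupershmidt with respect to $W$, and $f:V_P\to W_{Pf^{-1}}$ is an isomorphism of pre-Leibniz algebras.

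I will apply this with $V=\aa$ (regular), $W=\aa^\vee$, $f=\varphi$ and $P=R$, so that $Pf^{-1}=Rq^\sharp=\pi^\sharp$. This yields at once that $R$ is a Rota--Baxter operator iff $\pi$ is a pre-Leibniz $r$-matrix, together with
\[
\varphi(x\lt_Ry)=\varphi x\lt_{\pi^\sharp}\varphi y,
\]
and likewise for $\rt$.

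Next I use \eqref{eq:qN}, rewritten as $\varphi N=N^{t}\varphi$. With it, \eqref{eq:RBN1} becomes \eqref{eq:rn1}:
\[
N\pi^\sharp=NRq^\sharp,\qquad \pi^\sharp N^{t}=Rq^\sharp N^{t}=RNq^\sharp,
\]
and since $q^\sharp$ is invertible, $NR=RN$ is equivalent to $N\pi^\sharp=\pi^\sharp N^{t}$. Applying the general observation to $P=NR$ gives $(NR)q^\sharp=N\pi^\sharp$, hence
\[
\varphi(x\lt_{NR}y)=\varphi x\lt_{N\pi^\sharp}\varphi y .
\]
For the deformed products, $\varphi$ intertwines $N$ with $N^{t}$ and $\lt_R$ with $\lt_{\pi^\sharp}$. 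Expanding the definition $x(\lt_R)_Ny=Nx\lt_Ry+x\lt_RNy-N(x\lt_Ry)$ therefore gives
\[
\varphi\big(x\,(\lt_R)_N\,y\big)=\varphi x\,(\lt_{\pi^\sharp})_{N^{t}}\,\varphi y .
\]
Since $\varphi$ is bijective, \eqref{eq:RBN2a} is equivalent to \eqref{eq:rn2a}, and \eqref{eq:RBN2} to \eqref{eq:rn2}, in the same way. Finally, $N$ is a Nijenhuis operator in both definitions, so the two sets of conditions coincide and the equivalence follows.

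The only substantive point is verifying that $\varphi$ really intertwines each of the four regular actions with the corresponding action of $\aa^\vee$ as listed in \eqref{eq:dualreg}. This is exactly the content of \eqref{eq:quadiso}--\eqref{eq:quadiso4}, and I will check that the signs there match the four entries $-L_\lt^*$, $L_\circ^*$, $-L_\circ^*-R_\circ^*$, $L_\lt^*+R_\rt^*$ in order. The rest is bookkeeping.
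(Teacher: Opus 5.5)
Your proposal is correct and is essentially the paper's own proof. Both transport everything along $\varphi=(q^\sharp)^{-1}$: the intertwining relations give $\varphi(x\lt_R y)=\varphi x\lt_{\pi^\sharp}\varphi y$, and the same with $R$ replaced by $NR$, while $\varphi N=N^{t}\varphi$ matches $NR=RN$ with $N\pi^\sharp=\pi^\sharp N^{t}$ and matches the two deformed products. Your general lemma on isomorphisms of representations is a packaged form of the paper's direct computation with $\alpha=\varphi(x)$, $\beta=\varphi(y)$, and the intertwining you plan to check holds by the definition of a quadratic pre-Leibniz algebra.
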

\begin{proof}
Let $\varphi=(q^\sharp)^{-1}$, and let $\alpha=\varphi(x)$, $\beta=\varphi(y)$. Then $\pi^\sharp\alpha=Rx$ and $\pi^\sharp\beta=Ry$. By \eqref{eq:quadiso}--\eqref{eq:quadiso4}, we have
\begin{equation}\label{eq:quadR}
\begin{aligned}
\alpha\lt_{\pi^\sharp}\beta&=-L_\lt^*(Rx)\varphi(y)-(L_\circ^*+R_\circ^*)(Ry)\varphi(x)\\
&=\varphi\big(Rx\lt y+x\lt Ry\big)=\varphi(x\lt_Ry),
\end{aligned}
\end{equation}
and similarly $\alpha\rt_{\pi^\sharp}\beta=\varphi(x\rt_Ry)$. Thus $\pi^\sharp(\alpha\lt_{\pi^\sharp}\beta)=R(x\lt_Ry)$ and $\pi^\sharp\alpha\lt\pi^\sharp\beta=Rx\lt Ry$, and similarly for $\rt$, which implies that $\pi$ is a pre-Leibniz $r$-matrix if and only if $R$ is a Rota--Baxter operator. By \eqref{eq:qN}, $N^{t}\varphi=\varphi N$, so $\pi^\sharp N^{t}=RNq^\sharp$ and $N\pi^\sharp=NRq^\sharp$. Thus \eqref{eq:rn1} holds if and only if \eqref{eq:RBN1} holds. Moreover, by \eqref{eq:quadR} with $R$ replaced by $NR$, and by $N^{t}\varphi=\varphi N$, we have
\begin{align}
\alpha\lt_{N\pi^\sharp}\beta-\alpha\,(\lt_{\pi^\sharp})_{N^{t}}\,\beta&=\varphi(x\lt_{NR}y)-\big(N^{t}\alpha\lt_{\pi^\sharp}\beta+\alpha\lt_{\pi^\sharp}N^{t}\beta-N^{t}(\alpha\lt_{\pi^\sharp}\beta)\big)\notag\\\displaybreak[0]
&=\varphi(x\lt_{NR}y)-\big(\varphi(Nx)\lt_{\pi^\sharp}\varphi(y)+\varphi(x)\lt_{\pi^\sharp}\varphi(Ny)-N^{t}\varphi(x\lt_Ry)\big)\notag\\\displaybreak[0]
&=\varphi(x\lt_{NR}y)-\varphi\big(Nx\lt_Ry+x\lt_RNy-N(x\lt_Ry)\big)\notag\\\displaybreak[0]
&=\varphi\big(x\lt_{NR}y-x\,(\lt_R)_N\,y\big),
\end{align}
and similarly for $\rt$. Since $\varphi$ is bijective, \eqref{eq:rn2a}--\eqref{eq:rn2} hold if and only if \eqref{eq:RBN2a}--\eqref{eq:RBN2} hold. We finish the proof.
\end{proof}

Let $B$ be a bilinear form on $\aa$, and define $B^\flat:\aa\to\aa^*$ by $\pair{B^\flat(x)}{y}=B(x,y)$. We say that $B$ is \emph{nondegenerate} if $B^\flat$ is an isomorphism, and in that case we put $B^\sharp:=(B^\flat)^{-1}:\aa^*\to\aa$. We say that $B$ is \emph{closed} if, for all $x,y,z\in\aa$,
\begin{align}
B(x\lt y,z)&=B(y,x\lt z)+B(x,y\circ z+z\circ y),\label{eq:closed1}\\
B(x\rt y,z)&=-B(y,x\circ z)-B(x,y\lt z+z\rt y).\label{eq:closed2}
\end{align}
As in \cite{SJ}, closedness means that $B^\sharp$ is a Kupershmidt operator with respect to $\aa^\vee$. Indeed, let $\alpha,\beta\in\aa^*$ and put $x=B^\sharp\alpha$, $y=B^\sharp\beta$, so that $\pair{\alpha}{w}=B(x,w)$ and $\pair{\beta}{w}=B(y,w)$ for all $w\in\aa$. By \eqref{eq:dualreg}, for every $z\in\aa$,
\begin{align}
\pair{\alpha\lt_{B^\sharp}\beta}{z}&=\pair{\beta}{x\lt z}+\pair{\alpha}{y\circ z+z\circ y}=B(y,x\lt z)+B(x,y\circ z+z\circ y),\label{eq:closedcomp}\\
\pair{\alpha\rt_{B^\sharp}\beta}{z}&=-\pair{\beta}{x\circ z}-\pair{\alpha}{y\lt z+z\rt y}=-B(y,x\circ z)-B(x,y\lt z+z\rt y),\label{eq:closedcomp2}
\end{align}
while $\pair{B^\flat(x\lt y)}{z}=B(x\lt y,z)$ and $\pair{B^\flat(x\rt y)}{z}=B(x\rt y,z)$. Hence, if $B$ is nondegenerate, then $B$ is closed if and only if $B^\sharp$ is a Kupershmidt operator on $\aa$ with respect to $\aa^\vee$.

Combining closed bilinear forms with Nijenhuis operators, we obtain the following notion.

\begin{definition}\label{def:BN}
Let $N$ be a Nijenhuis operator on a pre-Leibniz algebra $(\aa,\lt,\rt)$ and $B$ a nondegenerate closed bilinear form. We say that $(B,N)$ is a \emph{BN-structure} on $\aa$ if
\begin{equation}\label{eq:BN}
B(Nx,y)=B(x,Ny),\qquad\forall\,x,y\in\aa,
\end{equation}
and the bilinear form $B_N(x,y):=B(x,Ny)$ also satisfies \eqref{eq:closed1}--\eqref{eq:closed2}; explicitly,
\begin{equation}
\begin{aligned}
B(x\lt y,Nz)&=B\big(y,N(x\lt z)\big)+B\big(x,N(y\circ z+z\circ y)\big),\\
B(x\rt y,Nz)&=-B\big(y,N(x\circ z)\big)-B\big(x,N(y\lt z+z\rt y)\big).
\end{aligned}
\end{equation}
\end{definition}

BN-structures provide further examples of Kupershmidt-dual-Nijenhuis structures.

\begin{thm}\label{thm:BN}
Let $N$ be a Nijenhuis operator on a pre-Leibniz algebra $\aa$ and $B$ a nondegenerate closed bilinear form. If $(B,N)$ is a BN-structure, then $(B^\sharp,N^{t},N)$ is a Kupershmidt-dual-Nijenhuis structure on $\aa$ with respect to the dual regular representation $\aa^\vee$.
\end{thm}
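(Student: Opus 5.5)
We verify the ingredients of Definition~\ref{def:ON} for $K=B^\sharp$, $S=N^{t}$ and the representation $\aa^\vee$. First, $B^\sharp$ is a Kupershmidt operator with respect to $\aa^\vee$, because $B$ is nondegenerate and closed (the discussion after \eqref{eq:closed2}). Second, $(N,N)$ is a Nijenhuis pair with respect to the regular representation since $N$ is a Nijenhuis operator. By Proposition~\ref{thm:dualpair}, $(N,N^{t})$ is then a dual-Nijenhuis pair with respect to $\aa^\vee$, exactly as in the proof of Theorem~\ref{thm:rnON}.

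Next we check \eqref{eq:ON}, i.e.\ $NB^\sharp=B^\sharp N^{t}$. This is equivalent to $N^{t}B^\flat=B^\flat N$. Pairing with $y$ gives $\pair{N^{t}B^\flat x}{y}=B(x,Ny)$ and $\pair{B^\flat Nx}{y}=B(Nx,y)$, so the identity is exactly \eqref{eq:BN}.

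It remains to prove \eqref{eq:ON2a}--\eqref{eq:ON2}. We plan to reduce this to a statement about the form $B_N$. By \eqref{eq:BN}, $B_N^\flat=N^{t}B^\flat=B^\flat N$. The key observation is that the formulas \eqref{eq:closedcomp}--\eqref{eq:closedcomp2} express $\alpha\lt_{P}\beta$ for $P=B^\sharp$ through $x=B^\sharp\alpha$, $y=B^\sharp\beta$. These formulas only use that $\pair{\alpha}{w}=B(x,w)$, $\pair{\beta}{w}=B(y,w)$ and the dual regular actions evaluated at $P\alpha=x$, $P\beta=y$. Now apply the same reasoning to $P=NB^\sharp$ with the same $\alpha,\beta$, so that $P\alpha=Nx$ and $P\beta=Ny$. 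This gives
\begin{equation}
\pair{\alpha\lt_{NB^\sharp}\beta}{z}=B(y,Nx\lt z)+B(x,Ny\circ z+z\circ Ny),
\end{equation}
and similarly for $\rt$.

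On the other side, we would use the equivalent criterion \eqref{eq:F} rather than expanding $(\lt_{B^\sharp})_{N^{t}}$ directly, since \eqref{eq:ON} holds. The criterion reads
\begin{equation}
N^{t}(\alpha\lt_{B^\sharp}\beta)=-L_\lt^*(x)N^{t}\beta-(L_\circ^*+R_\circ^*)(y)N^{t}\alpha,
\end{equation}
together with its $\rt$-analogue. We pair both sides with $z$. The left side becomes $B(y,x\lt Nz)+B(x,y\circ Nz+Nz\circ y)$. On the right side, $N^{t}\alpha=B_N^\flat x$ and $N^{t}\beta=B_N^\flat y$, so it becomes $B(y,N(x\lt z))+B(x,N(y\circ z+z\circ y))$. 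By the closedness \eqref{eq:closed1} of $B$ applied at $(x,y,Nz)$, the left side equals $B(x\lt y,Nz)$. By the closedness of $B_N$ given in Definition~\ref{def:BN}, the right side equals the same quantity $B(x\lt y,Nz)$. Hence the two sides agree. The $\rt$ identity follows in the same way from \eqref{eq:closed2} for $B$ and for $B_N$. Since $B^\sharp$ is onto, $\alpha,\beta$ range over all of $\aa^*$.

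The main obstacle is keeping the signs and the placement of $N$ correct inside the four mixed dual actions \eqref{eq:dualreg}. That is why we route everything through \eqref{eq:F} and \eqref{eq:closedcomp}--\eqref{eq:closedcomp2}: the pairing computation then reduces to comparing the two closedness identities evaluated at $(x,y,Nz)$. With \eqref{eq:F} established, \eqref{eq:key1} gives \eqref{eq:ON2a}--\eqref{eq:ON2}, and this completes the proof.
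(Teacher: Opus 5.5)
Your proof is correct and is essentially the paper's argument: $B^\sharp$ is a Kupershmidt operator by closedness, $NB^\sharp=B^\sharp N^{t}$ follows from \eqref{eq:BN}, $(N,N^{t})$ is a dual-Nijenhuis pair by Proposition~\ref{thm:dualpair}, and \eqref{eq:F} is verified by pairing with $z$ and identifying both sides with $B(x\lt y,Nz)$, using closedness of $B$ at $(x,y,Nz)$ and closedness of $B_N$. Your intermediate computation of $\alpha\lt_{NB^\sharp}\beta$ is never used and can be dropped.
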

\begin{proof}
Since $B$ is closed, $B^\sharp$ is a Kupershmidt operator with respect to $\aa^\vee$. By \eqref{eq:BN}, we have $\pair{B^\flat NB^\sharp\alpha}{w}=B(B^\sharp\alpha,Nw)=\pair{N^{t}\alpha}{w}$, which implies that $NB^\sharp=B^\sharp N^{t}$. By Proposition~\ref{thm:dualpair}, $(N,N^{t})$ is a dual-Nijenhuis pair with respect to $\aa^\vee$. By \eqref{eq:key1}, we only need to prove \eqref{eq:F} for $K=B^\sharp$ and $S=N^{t}$. Let $x=B^\sharp\alpha$ and $y=B^\sharp\beta$. For all $z\in\aa$, by \eqref{eq:closedcomp} and \eqref{eq:closed1} for $B$ and for $B_N$, we have
\begin{align}
\pair{N^{t}(\alpha\lt_{B^\sharp}\beta)}{z}&=B(y,x\lt Nz)+B(x,y\circ Nz+Nz\circ y)=B(x\lt y,Nz)\notag\\\displaybreak[0]
&=B\big(y,N(x\lt z)\big)+B\big(x,N(y\circ z+z\circ y)\big)\notag\\\displaybreak[0]
&=\pair{N^{t}\beta}{x\lt z}+\pair{N^{t}\alpha}{y\circ z+z\circ y}\notag\\\displaybreak[0]
&=\pair{-L_\lt^*(x)N^{t}\beta-(L_\circ^*+R_\circ^*)(y)N^{t}\alpha}{z},
\end{align}
which is \eqref{eq:F} for $K=B^\sharp$ and $S=N^{t}$. Similarly, the $\rt$-part of \eqref{eq:F} follows from \eqref{eq:closedcomp2} and \eqref{eq:closed2}. We finish the proof.
\end{proof}

The following result is a consequence of Proposition~\ref{prop:47} and Theorem~\ref{thm:BN}.

\begin{corollary}\label{cor:BN}
Let $N$ be a Nijenhuis operator on a pre-Leibniz algebra $\aa$ and $B$ a nondegenerate closed bilinear form. If $(B,N)$ is a BN-structure, then $B^\sharp$ and $NB^\sharp=B^\sharp N^{t}$ are compatible Kupershmidt operators on $\aa$ with respect to $\aa^\vee$.
\end{corollary}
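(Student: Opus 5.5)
This is a direct combination of two earlier results. First, by Theorem~\ref{thm:BN}, the BN-structure $(B,N)$ makes $(B^\sharp,N^{t},N)$ a Kupershmidt-dual-Nijenhuis structure on $\aa$ with respect to the dual regular representation $\aa^\vee$. In particular, $B^\sharp$ is a Kupershmidt operator with respect to $\aa^\vee$, since $B$ is closed and nondegenerate. The intertwining relation $NB^\sharp=B^\sharp N^{t}$ is also part of that theorem: it is \eqref{eq:ON} for $K=B^\sharp$ and $S=N^{t}$, and it was derived there from the symmetry condition \eqref{eq:BN}.

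Next, I apply Proposition~\ref{prop:47} to the triple $(K,S,N)=(B^\sharp,N^{t},N)$. That proposition states that for any Kupershmidt-(dual-)Nijenhuis structure, $K$ and $KS=NK$ are compatible Kupershmidt operators. Substituting gives that $B^\sharp$ and $B^\sharp N^{t}=NB^\sharp$ are compatible Kupershmidt operators with respect to $\aa^\vee$. This is exactly the claim.

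There is no real obstacle, since all the computational content was already handled in Theorem~\ref{thm:BN} (verifying \eqref{eq:F} via closedness of $B_N$) and in Proposition~\ref{prop:47}. If one wanted the conclusion that $NB^\sharp$ is itself a Kupershmidt operator stated separately, it also follows from Theorem~\ref{prop:ONprop}(ii).
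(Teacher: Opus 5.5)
Your proposal is correct and matches the paper's argument exactly. The paper also obtains the corollary by applying Theorem~\ref{thm:BN}, which gives the Kupershmidt-dual-Nijenhuis structure $(B^\sharp,N^{t},N)$ with respect to $\aa^\vee$, and then Proposition~\ref{prop:47}, which gives that $B^\sharp$ and $B^\sharp N^{t}=NB^\sharp$ are compatible Kupershmidt operators.
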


We illustrate Theorems~\ref{thm:RBNrn} and \ref{thm:BN} on the algebras of Example~\ref{ex:RBN}.

\begin{exam}\label{ex:quadRBN}
Let $\aa$ be as in Example~\ref{ex:RBN}, and write $q_{ij}=q(e_i,e_j)$, $B_{ij}=B(e_i,e_j)$.

(a) The invariance conditions of Definition~\ref{def:quad} reduce to $q_{22}=0$, $\sigma(q_{21}-q_{12})=0$, $(\sigma+\tau)q_{12}+\tau q_{21}=0$ and $\sigma q_{21}=2(\sigma+\tau)q_{12}$. Hence $\aa$ is quadratic if and only if $\sigma=-2\tau\neq0$, and then $q(e_1,e_1)=s$, $q(e_1,e_2)=q(e_2,e_1)=\theta$, $q(e_2,e_2)=0$ with $\theta\neq0$. Every Nijenhuis operator of Example~\ref{ex:RBN}(b) satisfies \eqref{eq:qN}, and, in the dual basis $\{e^1,e^2\}$, $q^\sharp(e^1)=\theta^{-1}e_2$, $q^\sharp(e^2)=\theta^{-1}e_1-s\theta^{-2}e_2$. For $R=R_{\rm II}$, $\pi^\sharp=Rq^\sharp$ gives
\begin{equation}
\pi=c\theta^{-1}\,e_1\otimes e_2+2c\theta^{-1}\,e_2\otimes e_1+(p\theta^{-1}-cs\theta^{-2})\,e_2\otimes e_2,
\end{equation}
which is not symmetric when $c\neq0$. By Theorem~\ref{thm:RBNrn} and Example~\ref{ex:RBN}(c), $\pi$ is a pre-Leibniz $r$-matrix, and $(\pi,N)$ is a pre-Leibniz $r$-$n$ structure if and only if $\mu c=0$.

(b) The closedness conditions reduce to $B_{22}=0$, $\sigma B_{21}=(3\sigma+2\tau)B_{12}$ and $\tau B_{21}=-2(\sigma+\tau)B_{12}$, a linear system of determinant $(2\sigma+\tau)(\sigma+2\tau)$. Hence a nondegenerate closed form exists if and only if $\tau=-2\sigma$ or $\sigma=-2\tau$, and then
\begin{equation}
B=\begin{pmatrix}s&\theta\\-\theta&0\end{pmatrix}\ \ (\tau=-2\sigma),\qquad B=\begin{pmatrix}s&\theta\\2\theta&0\end{pmatrix}\ \ (\sigma=-2\tau),\qquad \theta\neq0,
\end{equation}
which are in general neither symmetric nor skew-symmetric. Since $B(N(e_1),e_1)-B(e_1,N(e_1))=\mu(B_{21}-B_{12})$, the BN-structures are the pairs $(B,\lambda\Id_\aa)$, and $B^\sharp$, $\lambda B^\sharp$ are compatible by Corollary~\ref{cor:BN}.
\end{exam}

Applying Proposition~\ref{thm:hierarchy} to the structures of Theorems~\ref{thm:rnON} and~\ref{thm:BN} and to RBN-structures, we obtain the following hierarchies.

\begin{corollary}\label{cor:hierarchies}
Let $\aa$ be a pre-Leibniz algebra and $N$ a Nijenhuis operator on $\aa$.
\begin{enumerate}[(i)]
\item If $(R,N)$ is an RBN-structure, then, for every $k\in\mathbb N$, $N^kR$ is a Rota--Baxter operator, $(N^kR,N)$ is an RBN-structure, and the operators $N^kR$ are pairwise compatible.
\item If $(\pi,N)$ is a pre-Leibniz $r$-$n$ structure, let $\pi_k\in\aa\otimes\aa$ be given by $\pi_k^\sharp=N^k\pi^\sharp=\pi^\sharp(N^{t})^k$. Then each $\pi_k$ is a pre-Leibniz $r$-matrix, $(\pi_k,N)$ is a pre-Leibniz $r$-$n$ structure, and the operators $\pi_k^\sharp$ are pairwise compatible Kupershmidt operators with respect to $\aa^\vee$.
\item If $(B,N)$ is a BN-structure, then the operators $N^kB^\sharp=B^\sharp(N^{t})^k$, $k\in\mathbb N$, are pairwise compatible Kupershmidt operators on $\aa$ with respect to $\aa^\vee$.
\end{enumerate}
\end{corollary}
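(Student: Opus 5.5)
The plan is to apply Proposition~\ref{thm:hierarchy} to three Kupershmidt-(dual-)Nijenhuis structures, namely those given by the discussion after Definition~\ref{def:RBN}, by Theorem~\ref{thm:rnON} and by Theorem~\ref{thm:BN}. Each part then only requires translating the conclusions back into the language of Rota--Baxter operators, $r$-matrices and bilinear forms.

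For (i), I will use that $(R,N)$ is an RBN-structure exactly when $(R,N,N)$ is a Kupershmidt-Nijenhuis structure with respect to the regular representation. This uses the fact that $(N,N)$ is a Nijenhuis pair, and that \eqref{eq:RBN1}--\eqref{eq:RBN2} coincide with \eqref{eq:ON}--\eqref{eq:ON2} for $K=R$, $S=N$, because $\lt_R$ computed in the regular representation is $Rx\lt y+x\lt Ry$. Proposition~\ref{thm:hierarchy}(i) gives that $(N^kR,N,N)$ is a Kupershmidt-Nijenhuis structure. Here $N^kR=RN^k$, which follows by induction from $NR=RN$. A Kupershmidt operator for the regular representation is by definition a Rota--Baxter operator, so $N^kR$ is Rota--Baxter. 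Reading the equivalence backwards, $(N^kR,N)$ is an RBN-structure. Pairwise compatibility is Proposition~\ref{thm:hierarchy}(iii).

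For (ii), Theorem~\ref{thm:rnON} gives the Kupershmidt-dual-Nijenhuis structure $(\pi^\sharp,N^t,N)$ on $\aa$ with respect to $\aa^\vee$. By Proposition~\ref{thm:hierarchy}, $K_k=N^k\pi^\sharp=\pi^\sharp(N^t)^k$ is a Kupershmidt operator with respect to $\aa^\vee$, and $(K_k,N^t,N)$ is again a Kupershmidt-dual-Nijenhuis structure. Every linear map $\aa^*\to\aa$ has the form $\pi_k^\sharp$ for a unique $\pi_k\in\aa\otimes\aa$, since no symmetry is imposed. Hence $\pi_k$ is a pre-Leibniz $r$-matrix. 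The defining identities \eqref{eq:ON}--\eqref{eq:ON2} for $(K_k,N^t,N)$ are literally \eqref{eq:rn1}--\eqref{eq:rn2} for $(\pi_k,N)$, and $N$ is Nijenhuis by hypothesis, so $(\pi_k,N)$ is an $r$-$n$ structure. Compatibility again follows from Proposition~\ref{thm:hierarchy}(iii).

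For (iii), Theorem~\ref{thm:BN} gives the structure $(B^\sharp,N^t,N)$. Proposition~\ref{thm:hierarchy} then yields that the operators $N^kB^\sharp=B^\sharp(N^t)^k$ are pairwise compatible Kupershmidt operators with respect to $\aa^\vee$; the equality of the two expressions uses $NB^\sharp=B^\sharp N^t$, established in the proof of Theorem~\ref{thm:BN}. No step is a real obstacle. The only point requiring care is checking, in (i) and (ii), that the converse direction of each dictionary holds, so that the hierarchy members are again RBN or $r$-$n$ structures. This direction is immediate because the defining identities match verbatim.
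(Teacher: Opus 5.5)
Your proposal is correct and follows the paper's proof: you feed the three structures $(R,N,N)$, $(\pi^\sharp,N^{t},N)$ and $(B^\sharp,N^{t},N)$, supplied by the discussion after Definition~\ref{def:RBN} and by Theorems~\ref{thm:rnON} and~\ref{thm:BN}, into Proposition~\ref{thm:hierarchy}. Your extra checks are details the paper leaves implicit: that the RBN and $r$-$n$ conditions coincide verbatim with \eqref{eq:ON}--\eqref{eq:ON2} in both directions, and that each $\pi_k$ is well defined because $\aa\otimes\aa\cong\mathrm{Hom}(\aa^*,\aa)$ with no symmetry imposed.
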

\begin{proof}
By Theorems~\ref{thm:rnON} and~\ref{thm:BN} and the discussion after Definition~\ref{def:RBN}, $(R,N,N)$, $(\pi^\sharp,N^{t},N)$ and $(B^\sharp,N^{t},N)$ are Kupershmidt-(dual-)Nijenhuis structures. The conclusion follows from Proposition~\ref{thm:hierarchy}.
\end{proof}


\begin{thebibliography}{99}
\small\linespread{1.2}\selectfont
\setlength{\itemsep}{-3.5pt}

\bibitem{Aguiar} M. Aguiar, Pre-Poisson algebras, \emph{Lett. Math. Phys.} \textbf{54} (2000), 263--277.

\bibitem{B1} C. Bai, A unified algebraic approach to the classical Yang--Baxter equation, \emph{J. Phys. A: Math. Theor.} \textbf{40} (2007), 11073--11082.

\bibitem{BBGN} C. Bai, O. Bellier, L. Guo, X. Ni, Splitting of operations, Manin products, and Rota--Baxter operators, \emph{Int. Math. Res. Not.} \textbf{2013} (2013), 485--524.

\bibitem{BGN} C. Bai, L. Guo, X. Ni, Nonabelian generalized Lax pairs, the classical Yang--Baxter equation and PostLie algebras, \emph{Comm. Math. Phys.} \textbf{297} (2010), 553--596.

\bibitem{Bal} D. Balavoine, Deformation of algebras over a quadratic operad, \emph{Contemp. Math.} \textbf{202} (1997), 207--234.

\bibitem{Ba} G. Baxter, An analytic problem whose solution follows from a simple algebraic identity, \emph{Pacific J. Math.} \textbf{10} (1960), 731--742.

\bibitem{BenabdelhafidhNijenhuisMorphisms} S. Benabdelhafidh, Deformation theory and minimal models of operads for Nijenhuis algebra morphisms, \emph{J. Algebra Appl.}, to appear, doi:10.1142/S0219498827501350.

\bibitem{Bloh} A. M. Bloh, On a generalization of the concept of Lie algebra, \emph{Dokl. Akad. Nauk SSSR} \textbf{165} (1965), 471--473.

\bibitem{BW} M. Bordemann, F. Wagemann, Global integration of Leibniz algebras, \emph{J. Lie Theory} \textbf{27} (2017), 555--567.

\bibitem{QRZ} Y. Chen, L. Qin, S. Ren, R. Zhang, Kupershmidt--Nijenhuis structures on pre-Malcev algebras, \emph{Bull. Malays. Math. Sci. Soc.} \textbf{49} (2026), no.~1, Paper No.~17, doi:10.1007/s40840-025-02012-2.

\bibitem{CK} A. Connes, D. Kreimer, Renormalization in quantum field theory and the Riemann--Hilbert problem I. The Hopf algebra structure of graphs and the main theorem, \emph{Comm. Math. Phys.} \textbf{210} (2000), 249--273.

\bibitem{Das} A. Das, Pre-Leibniz algebras, \emph{Comm. Algebra} \textbf{52} (2024), no.~8, 3383--3399, doi:10.1080/00927872.2024.2318492.

\bibitem{DW} B. Dherin, F. Wagemann, Deformation quantization of Leibniz algebras, \emph{Adv. Math.} \textbf{270} (2015), 21--48.

\bibitem{Dor} I. Ya. Dorfman, \emph{Dirac Structures and Integrability of Nonlinear Evolution Equations}, John Wiley, 1993.

\bibitem{FM} A. Fialowski, A. Mandal, Leibniz algebra deformations of a Lie algebra, \emph{J. Math. Phys.} \textbf{49} (2008), 093511.

\bibitem{FF} B. Fuchssteiner, A. S. Fokas, Symplectic structures, their B\"acklund transformations and hereditary symmetries, \emph{Phys. D} \textbf{4} (1981/82), 47--66.

\bibitem{Guo} L. Guo, \emph{An Introduction to Rota--Baxter Algebra}, Surveys of Modern Mathematics 4, International Press, Somerville, MA; Higher Education Press, Beijing, 2012.

\bibitem{HarrathiMabroukNcibSilvestrov} F. Harrathi, S. Mabrouk, O. Ncib, S. Silvestrov, Kupershmidt operators on Hom-Malcev algebras and their deformation, \emph{Int. J. Geom. Methods Mod. Phys.} \textbf{20} (2023), 2350046.

\bibitem{HLS} Y. Hu, J. Liu, Y. Sheng, Kupershmidt-(dual-)Nijenhuis structures on a Lie algebra with a representation, \emph{J. Math. Phys.} \textbf{59} (2018), 081702.

\bibitem{KW} M. Kinyon, A. Weinstein, Leibniz algebras, Courant algebroids, and multiplications on reductive homogeneous spaces, \emph{Amer. J. Math.} \textbf{123} (2001), 525--550.

\bibitem{KSM} Y. Kosmann-Schwarzbach, F. Magri, Poisson--Nijenhuis structures, \emph{Ann. Inst. H. Poincar\'e Phys. Th\'eor.} \textbf{53} (1990), 35--81.

\bibitem{KR} Y. Kosmann-Schwarzbach, V. Rubtsov, Compatible structures on Lie algebroids and Monge--Amp\`ere operators, \emph{Acta Appl. Math.} \textbf{109} (2010), 101--135.

\bibitem{Ku} B. A. Kupershmidt, What a classical $r$-matrix really is, \emph{J. Nonlinear Math. Phys.} \textbf{6} (1999), 448--488.

\bibitem{LBS} J. Liu, C. Bai, Y. Sheng, Compatible $O$-operators on bimodules over associative algebras, \emph{J. Algebra} \textbf{532} (2019), 80--118.

\bibitem{Lod} J.-L. Loday, Une version non commutative des alg\`ebres de Lie : les alg\`ebres de Leibniz, \emph{Enseign. Math.} \textbf{39} (1993), 269--293.

\bibitem{LP} J.-L. Loday, T. Pirashvili, Universal enveloping algebras of Leibniz algebras and (co)homology, \emph{Math. Ann.} \textbf{296} (1993), 139--158.

\bibitem{Mabrouk} S. Mabrouk, Deformation of Kupershmidt operators and Kupershmidt-Nijenhuis structures of a Malcev algebra, \emph{Hacet. J. Math. Stat.} \textbf{51} (2022), 199--217.

\bibitem{MM} F. Magri, C. Morosi, A geometrical characterization of integrable Hamiltonian systems through the theory of Poisson--Nijenhuis manifolds, Quaderno S/19, Milan, 1984.

\bibitem{NR} A. Nijenhuis, R. W. Richardson, Deformation of Lie algebra structures, \emph{J. Math. Mech.} \textbf{17} (1967), 89--105.

\bibitem{RRH} Z. Ravanpak, A. Rezaei-Aghdam, G. Haghighatdoost, Invariant Poisson--Nijenhuis structures on Lie groups and classification, \emph{Int. J. Geom. Methods Mod. Phys.} \textbf{15} (2018), 1850059.

\bibitem{STS} M. A. Semenov-Tian-Shansky, What is a classical $r$-matrix?, \emph{Funct. Anal. Appl.} \textbf{17} (1983), 259--272.

\bibitem{SJ} Q. Sun, N. Jing, $O$-operators and related structures on Leibniz algebras, \emph{Comm. Algebra} \textbf{51} (2023), no.~5, 2199--2216.

\bibitem{ST} R. Tang, Y. Sheng, Leibniz bialgebras, relative Rota--Baxter operators, and the classical Leibniz Yang--Baxter equation, \emph{J. Noncommut. Geom.} \textbf{16} (2022), 1179--1211.

\bibitem{U} K. Uchino, Quantum analogy of Poisson geometry, related dendriform algebras and Rota--Baxter operators, \emph{Lett. Math. Phys.} \textbf{85} (2008), 91--109.

\bibitem{WSBL} Q. Wang, Y. Sheng, C. Bai, J. Liu, Nijenhuis operators on pre-Lie algebras, \emph{Commun. Contemp. Math.} \textbf{21} (2019), 1850050.
\end{thebibliography}
\end{document}